\theoremstyle{definition}
\newtheorem{theorem}{Theorem}[section]
\newtheorem{theoremx}{Theorem}
\newtheorem{question}[theorem]{Question}
\newtheorem{corollary}[theorem]{Corollary}
\newtheorem{lemma}[theorem]{Lemma}
\newtheorem{proposition}[theorem]{Proposition}
\theoremstyle{definition}
\newtheorem{definition}[theorem]{Definition}
\newtheorem{example}[theorem]{Example}
\newtheorem{conjecturex}[theoremx]{Conjecture}
\newtheorem{conjecture}[theorem]{Conjecture}
\newtheorem{remark}[theorem]{Remark}
\numberwithin{equation}{subsection}
\newcommand{\m}{\mathfrak{m}}
\newcommand{\n}{\mathfrak{n}}
\newcommand{\fpt}{{\operatorname{fpt}}}
\newcommand{\RR}{\mathbb{R}}
\newcommand{\NN}{\mathbb{N}}
\newcommand{\ZZ}{\mathbb{Z}}
\newcommand{\QQ}{\mathbb{Q}}
\newcommand{\cP}{\mathcal{P}}
\newcommand{\Cech}{ \check{\rm{C}}}
\newcommand{\reg}{\operatorname{reg}}
\newcommand{\pd}{\operatorname{pd}}
\newcommand{\sdim}{\operatorname{sdim}}
\newcommand{\Spec}{\operatorname{Spec}}
\newcommand{\Depth}{\operatorname{depth}}
\newcommand{\Hom}{\operatorname{Hom}}
\newcommand{\Supp}{\operatorname{Supp}}
\newcommand{\Max}{\operatorname{Max}}
\newcommand{\Ass}{\operatorname{Ass}}	
\newcommand{\depth}{\operatorname{depth}}
\newcommand{\Proj}{\operatorname{Proj}}
\newcommand{\FDer}[1]{\stackrel{#1}{\to}}
\newcommand{\ls}{\leqslant}%
\newcommand{\gs}{\geqslant}
\newcommand{\ds}{\displaystyle}
\newcommand{\p}{\mathfrak{p}}
\newcommand{\ov}[1]{\overline{#1}}
\newcommand{\ann}{\operatorname{ann}}
\newcommand{\cd}{\operatorname{cd}}
\newcommand{\lct}{\operatorname{lct}}
\renewcommand{\a}{\mathfrak{a}}
\newcommand{\OO}{\mathcal{O}}
\begin{document}
\newcommand{\tens}{\otimes}
\newcommand{\hhtest}[1]{\tau ( #1 )}
\renewcommand{\hom}[3]{\operatorname{Hom}_{#1} ( #2, #3 )}

\title{$F$-thresholds of graded rings}
\author{Alessandro De Stefani}
\author{Luis N\'u\~nez-Betancourt}
\subjclass[2010]{Primary 13A35; Secondary 13H10, 14B05.}

\keywords{$a$-invariant, $F$-pure threshold, Diagonal $F$-threshold, $F$-purity, projective dimension, Castelnuovo-Mumford regularity.}

\maketitle

\begin{abstract}
The $a$-invariant, the $F$-pure threshold, and the diagonal $F$-threshold are three important invariants of a graded $K$-algebra.
Hirose, Watanabe, and Yoshida have conjectured relations among these invariants for strongly $F$-regular rings. 
In this article, we prove that these relations hold only assuming that  the algebra is $F$-pure. In addition, we present an interpretation of the $a$-invariant for $F$-pure Gorenstein graded $K$-algebras in terms of regular sequences that preserve $F$-purity. This result is in the spirit of Bertini theorems for projective varieties. Moreover, we show connections with projective dimension, Castelnuovo-Mumford regularity, and Serre's condition $S_k$. We also present analogous results and questions in characteristic zero. 
\end{abstract}

\section{Introduction}
Throughout this manuscript we consider an $F$-pure standard graded algebra over a field $K$ of positive characteristic $p$ such that $[K:K^p]<\infty$. 
We say that  $R$ is $F$-pure if the Frobenius map $F:R\to R$ splits. This property simplifies computations for cohomology groups and implies vanishing properties of these groups \cite{LyuVan,AnuragUliPure,MaRFmod}. 


In this article, we show relations among three important classes of invariants that give information about the singularity of $R$:   the $a$-invariants,  the $F$-pure threshold, and the diagonal $F$-threshold. 

We first consider the $i$-th $a$-invariant of $R,$ $a_i(R)$, 
which is defined as the degree of the highest nonzero part of the $i$-th local cohomology with support over $\m$ (see Section \ref{Background} for details). 
If $d=\dim(R)$, then $a_d(R)$ is often just called the $a$-invariant of $R$, and it is a classical invariant,  introduced by Goto and Watanabe \cite{GW1}. For example, if $R$ is Cohen-Macaulay, $a_d(R)$ determines the highest shift in the resolution of $R$, and moreover the Hilbert function and the Hilbert polynomial of $R$ coincide if and only if $a_d(R)$ is negative.

We also consider the $F$-pure threshold with respect to $\m$, $\fpt(R)$, which was introduced by Takagi and Watanabe \cite{TW2004} (see Section \ref{PreFPT} for details). This invariant is related to the log-canonical threshold and roughly speaking measures the asymptotic splitting order of $\m.$ 

Finally, we consider the diagonal $F$-threshold, $c^\m(R)$. This invariant is, roughly speaking, the asymptotic Frobenius order of $\m$, and it has several connections with tight closure theory \cite{HMTW}. The $F$-threshold has also connections with the Hilbert-Samuel multiplicity \cite{HMTW} and with the Hilbert Kunz multiplicity \cite{NBS}.
 
Hirose, Watanabe, and Yoshida \cite{HWY} made the following conjecture that relates these invariants:

\begin{conjecturex}[\cite{HWY}] \label{conj}
Let $R$ be a standard graded $K$-algebra with $K$ an $F$-finite field and $d=\dim(R).$
Assume that $R$ is strongly F-regular. Then,
\begin{enumerate}
\item $\fpt(R)\ls -a_d(R)\ls c^\m (R)$.
\item $\fpt(R)=-a_d(R)$ if and only if $R$ is Gorenstein.
\end{enumerate}
\end{conjecturex}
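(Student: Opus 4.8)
I would prove both parts under the sole hypothesis that $R$ is $F$-pure (weaker than strongly $F$-regular), so that Conjecture~\ref{conj} follows a fortiori. The object that drives everything is the graded canonical module $\omega_R := H^d_\m(R)^{\vee}$, which is a finitely generated graded $R$-module since $H^d_\m(R)$ is Artinian, and whose initial degree is $\alpha(\omega_R) = -a_d(R)$. As $F$-pure rings are $F$-injective, the Frobenius trace $T^e\colon F^e_*\omega_R \twoheadrightarrow \omega_R$ --- the Matlis dual of the injective Frobenius action on $H^d_\m(R)$ --- is surjective for every $e$, and a degree count already yields $a_d(R)\le 0$. I would also use the two elementary reformulations $c^\m(R) = \lim_e a_0\bigl(R/\m^{[p^e]}\bigr)/p^e$ and $\fpt(R) = \lim_e a_0(R/I_e)/p^e$, where $a_0(-)$ is the top nonzero degree of a graded Artinian ring and $I_e = \{\,r \in R : \phi(r^{1/p^e})\in\m \text{ for all } \phi\in\Hom_R(\R{e},R)\,\}$ is the $e$-th non-$F$-pure ideal, which always contains $\m^{[p^e]}$. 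No base change on $K$ is needed for Part~(1); a harmless reduction to a perfect field is convenient only in Part~(2).

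For $-a_d(R)\le c^\m(R)$, put $\alpha=-a_d(R)$ and fix a minimal generator $w\in(\omega_R)_\alpha$. Grading $F^e_*\omega_R$ so that $T^e$ is degree-preserving (an element of $\omega_R$-degree $n$ has degree $n/p^e$), surjectivity of $T^e$ and the vanishing of $\omega_R$ below degree $\alpha$ force $w=T^e(\widetilde w)$ for some $\widetilde w\in(\omega_R)_{\alpha p^e}$; since $T^e(\m^{[p^e]}\omega_R)=T^e(\m\cdot F^e_*\omega_R)=\m\,T^e(F^e_*\omega_R)=\m\omega_R\not\ni w$, we get $\widetilde w\notin\m^{[p^e]}\omega_R$. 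Lifting $\widetilde w$ along a fixed graded free presentation of $\omega_R$ yields a homogeneous element of $R$ of degree at least $\alpha p^e - C$ (with $C$ the largest generator degree of $\omega_R$, independent of $e$) that does not lie in $\m^{[p^e]}$; hence $a_0(R/\m^{[p^e]})\ge \alpha p^e - C$ and $c^\m(R)\ge\alpha$. For $\fpt(R)\le -a_d(R)$, fix $t<\fpt(R)$: for $e\gg 0$, sharp $F$-purity of $(R,\m^{t})$ produces a homogeneous $f_e\in\m^{\lceil t(p^e-1)\rceil}$ and $\phi_e\in\Hom_R(\R{e},R)$ with $\phi_e(f_e^{1/p^e})=1$, so that $R\xrightarrow{\ \cdot f_e^{1/p^e}\ }\R{e}$ is a split injection. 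Applying $\Hom_R(-,\omega_R)$, and using $\Hom_R(\R{e},\omega_R)\cong F^e_*\omega_R$ (here $F$-finiteness is essential), turns this into a surjection $\Theta_e = T^e\circ(f_e\cdot)\colon F^e_*\omega_R\twoheadrightarrow\omega_R$ which raises $F^e_*$-degree by $\deg(f_e)/p^e$; since the bottom degree of the target is $\alpha$ and of the source is $\alpha/p^e$, surjectivity forces $\deg(f_e)\le\alpha(p^e-1)$, whence $t\le\alpha$, and letting $t\uparrow\fpt(R)$ gives $\fpt(R)\le -a_d(R)$. This settles Part~(1).

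In Part~(2), the direction $(\Leftarrow)$ is short. If $R$ is Gorenstein then $\omega_R\cong R(a_d(R))$, so $\Hom_R(\R{e},R)\cong F^e_*\omega_R$ is cyclic over $\R{e}$, generated by the Frobenius trace $T^e$, which is homogeneous of degree $-(-a_d(R))(p^e-1)/p^e$ (a direct computation, or reduce to a polynomial ring via Fedder). Since $R$ is $F$-pure, $T^e$ is surjective, so there is a homogeneous $f_e$ of degree exactly $(-a_d(R))(p^e-1)$ with $T^e(f_e^{1/p^e})=1$; thus $(R,\m^{-a_d(R)})$ is sharply $F$-pure and $\fpt(R)\ge -a_d(R)$, which together with Part~(1) gives equality (when $-a_d(R)=0$ this is automatic, as $0\le\fpt(R)\le -a_d(R)$).

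The direction $(\Rightarrow)$ is where I expect the main obstacle. Suppose $R$ is $F$-pure with $\fpt(R)=-a_d(R)=:\alpha$. Then for $e\gg 0$ the bound $\deg(f_e)\le\alpha(p^e-1)$ from Part~(1) is attained, so $\Hom_R(\R{e},R)$ contains a homogeneous element of precisely the extremal degree of the Gorenstein trace and the associated $\Theta_e\colon F^e_*\omega_R\twoheadrightarrow\omega_R$ is already onto from the bottom graded piece $(\omega_R)_\alpha$. The plan is to upgrade this to cyclicity of $\Hom_R(\R{e},R)$ as an $\R{e}$-module: a minimal generating set, together with its degrees, is recorded by $\Hom_R(\R{e},R)\otimes_R K$, and one argues that a spurious minimal generator of more negative degree would, upon rerunning the $c^\m$-estimate of Part~(1) with $\omega_R$ replaced by the larger module $\Hom_R(\R{e},R)^{\vee}$, contradict the equality $\fpt(R)=-a_d(R)$. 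Granting cyclicity, $\Hom_R(\R{e},R)$ is --- $R$ being reduced --- free of rank one over $\R{e}$, and by the Fedder-type criterion this is equivalent to $R$ being Gorenstein. The crucial subtlety, on which I would spend the most care, is that one must land on cyclicity of $\Hom_R(\R{e},R)$ itself and not merely of $\omega_R$: the latter can be cyclic for non-Gorenstein $R$ (for instance $R=K[x,y,z]/(xz,yz)$, where one checks $\fpt(R)<-a_d(R)$), so the bookkeeping that excludes ``high-degree splitting plus extra lower-degree generators'' is exactly the heart of the matter.
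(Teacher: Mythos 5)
Your arguments for part (1) and for the ``Gorenstein $\Rightarrow$ equality'' half of part (2) are essentially correct, and they are Matlis-dual reformulations of what the paper does: the paper proves $\fpt(R)\ls -a_i(R)$ (Theorem \ref{Thm a-inv}) by applying $H^i_\m$ to the split inclusion $R(-\gamma)\hookrightarrow R^{1/p^e}$ and comparing top degrees, proves $-a_d(R)\ls c^\m_-(R)$ (Theorem \ref{Main c(R)}) from the generator degrees of $R^{1/p^e}$, and obtains the Gorenstein equality (Theorem \ref{ThmGor}) by computing the degree of Fedder's colon generator via a map of minimal free resolutions together with Lemma \ref{Lemma Lift fpt}. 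Your trace/canonical-module bookkeeping (surjectivity of $T^e\colon F^e_*\omega_R\to\omega_R$, $\Hom_R(R^{1/p^e},\omega_R)\cong F^e_*\omega_R$, and degree counts on the bottom graded piece) gives an intrinsic, presentation-free version of the same degree counts; it needs $F$-injectivity where the paper's $c^\m$-bound does not, but under the stated hypotheses that costs nothing.

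The genuine gap is the direction $\fpt(R)=-a_d(R)\Rightarrow R$ Gorenstein. First, your claim to prove it assuming only $F$-purity is false: the paper's Example \ref{exnotGor}, $R=K[x,y,z]/(xy,xz,yz)$, is $F$-pure and one-dimensional with $\fpt(R)=-a_1(R)=0$, yet $\omega_R$ needs two generators; your cautionary example $K[x,y,z]/(xz,yz)$ does not detect this, since there the equality fails, whereas in the three-lines ring it holds. Consequently no amount of ``rerunning the $c^\m$-estimate to exclude spurious low-degree generators of $\Hom_R(R^{1/p^e},R)$'' can succeed from $F$-purity alone; strong $F$-regularity would have to enter in an essential way, and your sketch never identifies where. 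Second, even granting strong $F$-regularity, the passage from ``a splitting of extremal degree exists'' to ``$\Hom_R(R^{1/p^e},R)$ is cyclic over $R^{1/p^e}$'' is precisely the open content of Conjecture \ref{conj}(2): the paper does not prove this direction either (it settles only part (1) and the Gorenstein $\Rightarrow$ equality half, and explicitly notes that Example \ref{exnotGor} does not disprove the conjecture because that ring is not strongly $F$-regular). A further small inaccuracy: cyclicity of $\Hom_R(F^e_*R,R)$ over $F^e_*R$ only yields that $(p^e-1)K_R$ is trivial (quasi-Gorenstein up to index), so one still needs Cohen-Macaulayness (available from strong $F$-regularity, not from $F$-purity) to conclude Gorenstein.
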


This conjecture has been proved only for strongly $F$-regular Hibi rings \cite{fptHibi} and affine toric rings \cite{HWY}. 
In addition, $\fpt(R)=-a_d(R)$  for strongly $F$-regular standard graded 
Gorenstein rings \cite[Example 2.4 (iv)]{TW2004}. In this paper we settle the first part of this conjecture and one direction of the second. Furthermore, we drop the restrictive hypotheses of strong $F$-regularity. We just need to assume that $R$ is $F$-pure, which is needed to define $\fpt(R)$ and $c^\m(R)$.  

\begin{theoremx}[see Theorems \ref{Thm a-inv}, \ref{Main c(R)}, and \ref{ThmGor}]\label{MainTheorem}
Let $R$ be a standard graded $K$-algebra which is $F$-finite and $F$-pure, and let $d=\dim(R)$. Then,
\begin{enumerate}
\item $\fpt(R)\ls -a_i(R)$ for every $i\in\NN$. 
\item If $a_i(R)\neq -\infty,$ then $-a_i(R)\ls c^\m(R)$. 
\item If $R$ is Gorenstein, then $\fpt(R)=-a_d(R).$
\end{enumerate}
\end{theoremx}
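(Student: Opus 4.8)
The plan is to reduce the Gorenstein case to a direct computation of both invariants via the description of local cohomology. Recall that for a $d$-dimensional $F$-pure standard graded algebra $R = S/I$ with $S$ a polynomial ring, one has by graded local duality that the top local cohomology $H^d_\m(R)$ is, as a graded module, the Matlis dual of the canonical module $\omega_R$, and the $a$-invariant $a_d(R)$ equals $-\min\{j : (\omega_R)_j \neq 0\}$. When $R$ is Gorenstein, $\omega_R \cong R(a)$ for some shift $a$, and in fact $a = a_d(R)$; moreover $\omega_R$ is cyclic, generated in degree $-a_d(R)$. The key point is that in the Gorenstein case one can choose a single socle generator of $H^d_\m(R)$, and $\fpt(R)$ is computed by tracking the Frobenius action on this socle element together with the splitting. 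Concretely, since $R$ is $F$-pure and Gorenstein, a generator $z$ of $H^d_\m(R)$ of internal degree $a_d(R)$ can be chosen so that the question of whether $\m^{[p^e]} : \m^{\lceil t p^e \rceil}$ contains an element generating the free rank-one Frobenius module reduces to a degree count, exactly as in the toric and complete-intersection cases treated by Takagi–Watanabe.

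The first step is to invoke part (1) of the theorem, which gives $\fpt(R) \le -a_d(R)$ for free (taking $i = d$, noting $a_d(R) \neq -\infty$ since $\dim R = d$). So it suffices to prove the reverse inequality $\fpt(R) \ge -a_d(R)$. For this I would use the characterization of $\fpt(R)$ via the Frobenius splitting numbers: $\fpt(R) = \lim_{e \to \infty} \max\{ r : \m^r \not\subseteq I_e \} / p^e$, where $I_e$ is the ideal such that the $e$-th Frobenius map $R \to F^e_* R$ factors as a splitting precisely off $F^e_*(I_e)$. In the Gorenstein case, $R$ is a free module over a Noether normalization (after a suitable general choice of coordinates, using $F$-finiteness and that we may pass to a larger field), and $F^e_* R$ is likewise free over $F^e_*$ of the polynomial subring; the splitting ideal $I_e$ can then be identified with a colon ideal involving the $p^e$-th power of the defining equation, as in the hypersurface case. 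Tracking the top-degree socle element through this identification yields $\max\{ r : \m^r \not\subseteq I_e\} \ge -a_d(R) p^e - O(1)$, which after dividing by $p^e$ and letting $e \to \infty$ gives $\fpt(R) \ge -a_d(R)$.

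An alternative and cleaner route, which I would pursue in parallel, is to use the known equality $\fpt(R) = -a_d(R)$ already recorded in the literature for \emph{strongly} $F$-regular Gorenstein standard graded rings (cited as \cite[Example 2.4 (iv)]{TW2004}) and to observe that the argument there only uses $F$-purity, not strong $F$-regularity: the computation is via the $F$-pure threshold of the canonical module, and Gorensteinness makes $\omega_R$ free of rank one, so the Frobenius non-splitting ideal is governed entirely by the socle degree, which is $-a_d(R)$. Either way, the substantive content is the identification of the Frobenius splitting data with a degree count, and the Gorenstein hypothesis is what collapses the general (possibly subtle) socle structure of $H^d_\m(R)$ to a single element in degree $a_d(R)$.

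The main obstacle I anticipate is handling the $F$-finiteness and base field carefully: to run the "free over a Noether normalization" argument one typically wants $K$ infinite and perfect (or at least to control $[K:K^p]$), and the reduction to such a field must be shown not to change $\fpt(R)$ or $a_d(R)$ — standard but requiring the flat base change lemmas for local cohomology and for $F$-purity. A secondary subtlety is that $\fpt(R)$ is \emph{a priori} only a limsup/liminf; one should confirm the limit exists here (it does, by the general theory, since $R$ is $F$-pure), so that the two one-sided inequalities indeed pin down the value.
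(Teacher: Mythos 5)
Your proposal covers only part (3) of the stated theorem: parts (1) and (2) are never argued (and you actually use part (1) as an input for the inequality $\fpt(R)\ls -a_d(R)$), so as a proof of the full statement it is incomplete. Even restricted to part (3), the outline has the right shape --- use $\fpt(R)=\lim_e b_\m(p^e)/p^e$ (the paper's Proposition \ref{PropEquivFPT}) and exploit that Gorensteinness makes the Frobenius-splitting data ``rank one'' --- but the crucial quantitative step is asserted rather than proved. What you need is precisely that $b_\m(p^e)\gs -a_d(R)(p^e-1)-O(1)$, equivalently (via Fedder and Lemma \ref{Lemma Lift fpt}) that the generator $f_e$ of $(I^{[p^e]}:I)$ modulo $I^{[p^e]}$ has degree $(p^e-1)(n+a_d(R))$. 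Your sentence ``tracking the top-degree socle element \dots\ yields $\max\{r:\m^r\not\subseteq I_e\}\gs -a_d(R)p^e-O(1)$'' is exactly this claim with no argument behind it; the Noether-normalization route you sketch does not deliver it, since $R$ is not a hypersurface over the normalization and there is no single ``defining equation'' whose $p^e$-th power controls the splitting ideal. The paper closes this gap by comparing minimal graded free resolutions of $S/I^{[p]}$ and $S/I$: the last map of the degree-zero comparison is multiplication by a form $f$ with $(I^{[p]}:I)=fS+I^{[p]}$ (Vraciu), so $\deg f=(p-1)(n+a_d(R))$, and then $(I^{[p^e]}:I)=f^{1+p+\cdots+p^{e-1}}S+I^{[p^e]}$ together with Lemma \ref{Lemma Lift fpt} computes $\fpt(R)=-a_d(R)$ outright, with no need to quote part (1) and no base-field extension.

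Your ``cleaner alternative'' --- cite \cite[Example 2.4 (iv)]{TW2004} and observe that the argument there only uses $F$-purity --- begs the question: that example is stated for strongly $F$-regular Gorenstein graded rings, and the whole point of the theorem under review is to remove strong $F$-regularity; whether the cited computation survives under mere $F$-purity is exactly what must be verified, and the paper does so by the new argument above rather than by inspection of the old one. So the proposal identifies the correct invariants and the correct reduction (socle degree versus splitting ideal), but the step that would constitute the proof --- pinning down the degree of the Fedder generator, or equivalently the growth of $b_\m(p^e)$, from the Gorenstein hypothesis alone --- is missing, and parts (1) and (2) of the statement are not addressed at all.
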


Theorem \ref{MainTheorem}(2) implies that $-a_d(R)\ls c^\m(R)$ for standard graded $F$-pure $K$-algebras. 
This inequality  has been proven before for complete intersections without assuming strongly $F$-regularity or $F$-purity \cite[Proposition 2.3]{LiSocles}. In Theorem \ref{Main c(R)}, we prove $-a_d(R)\ls c^\m_{-}(R)$ for all $F$-finite standard graded $K$-algebras.
In Example \ref{exnotGor}, we show that the converse of Theorem \ref{MainTheorem}(3) does not hold in general. This does not disprove Conjecture \ref{conj}(2), since the ring that we consider is not strongly 
$F$-regular.

We also prove the analogue of Conjecture \ref{conj}(1) in characteristic zero, and we show one direction of  Conjecture \ref{conj}(2). These results are obtained by reduction to positive characteristic methods \cite{HHCharZero}.

\begin{theoremx}[see Theorem \ref{MainCharZero}]
Let $K$ be a field of characteristic zero, and let $(R,\m,K)$ be a standard graded normal and $\QQ$-Gorenstein $K$-algebra such that $X=\Spec R$ is log-terminal. Let $d=\dim(R)$. Then,
\begin{enumerate}
\item $\lct(X)\ls -a_d(R)$. 
\item If $R$ is Gorenstein, then $\lct(X)=-a_d(R).$
\end{enumerate}
\end{theoremx}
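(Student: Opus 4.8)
The plan is to deduce both statements from their positive-characteristic counterparts, namely Theorem~\ref{MainTheorem}(1) and (3), via the standard ``reduction to characteristic $p$'' machinery of Hochster--Huneke~\cite{HHCharZero}. First I would set up a family: spreading out $R$, its grading, and the ideal $\m$ over a finitely generated $\ZZ$-algebra $A\subseteq K$, one obtains a model $R_A$ whose generic fiber is $R$ and whose closed fibers $R_\kappa$ (for $\kappa$ running over residue fields of maximal ideals of $A$, which are finite fields of varying characteristic $p$) are standard graded $\kappa$-algebras. Since $X=\Spec R$ is log-terminal and $R$ is normal and $\QQ$-Gorenstein, the dense-type statement for log-terminal singularities gives that $R_\kappa$ is strongly $F$-regular — in particular $F$-pure — for a Zariski-dense set of primes; and log-terminality is precisely detected on the analytic side by the fact that the $F$-pure threshold of $R_\kappa$ converges to the log-canonical threshold $\lct(X)$ as $p\to\infty$. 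This is exactly the content of the Hara--Yoshida / Takagi correspondence between multiplier ideals and test ideals, and between $\lct$ and $\fpt$; I would cite it in the form: $\lct(X)=\lim_{p\to\infty}\fpt(R_\kappa)$, with $\fpt(R_\kappa)\le \lct(X)$ for all $p\gg 0$, and the reverse-limit statement as well.

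Next I would compare $a$-invariants across the family. The key point is that local cohomology commutes with the base change from the generic to a closed fiber for all but finitely many primes: after possibly localizing $A$ further, $H^i_{\m_A}(R_A)\otimes_A \kappa \cong H^i_{\m_\kappa}(R_\kappa)$ as graded modules, so $a_i(R_\kappa)=a_i(R)$ for all $p\gg 0$, and in particular $a_d(R_\kappa)=a_d(R)$. For statement (1): applying Theorem~\ref{MainTheorem}(1) to each $F$-pure fiber $R_\kappa$ gives $\fpt(R_\kappa)\le -a_d(R_\kappa)=-a_d(R)$; letting $p\to\infty$ and using $\lct(X)=\lim_{p\to\infty}\fpt(R_\kappa)$ yields $\lct(X)\le -a_d(R)$. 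For statement (2): if $R$ is Gorenstein, then $R_\kappa$ is Gorenstein for $p\gg 0$ (the relevant duality/canonical-module data spreads out), so Theorem~\ref{MainTheorem}(3) gives $\fpt(R_\kappa)=-a_d(R_\kappa)=-a_d(R)$ for all large $p$; taking the limit over $p$ forces $\lct(X)=-a_d(R)$ as well.

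I expect the main obstacle to be bookkeeping in the spreading-out step: one must simultaneously descend the ring, the standard grading, the maximal homogeneous ideal, the normality and $\QQ$-Gorenstein hypotheses (so that the index of the canonical divisor is invertible in $A$ and the anticanonical algebra is finitely generated over the model), and the log-terminal condition, and then verify that all the relevant formations — local cohomology, $F$-splittings, canonical modules — are compatible with base change on a dense open of $\Spec A$. None of these steps is new, but assembling them correctly requires care, and one should be explicit that $\lct$ is computed with respect to the $\QQ$-Gorenstein structure so that ``log-terminal'' has its intended meaning. Once the family is in place, statements (1) and (2) are immediate consequences of Theorem~\ref{MainTheorem} applied fiberwise together with the limit formula $\lct(X)=\lim_{p\to\infty}\fpt(R_\kappa)$.
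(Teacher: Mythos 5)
Your proposal is correct and follows essentially the same route as the paper: spread $R$ out over a finitely generated $\ZZ$-subalgebra $A\subseteq K$, apply Theorem \ref{Thm a-inv} (and, for (2), Theorem \ref{ThmGor}) to the positive-characteristic fibers, and transfer the inequalities using the correspondence between klt pairs and dense strongly $F$-regular type together with the direction ``dense $F$-pure type implies log canonical.'' The paper's only refinements are technical: it compares $a_d$ across fibers via Cohen--Macaulayness (which follows from log-terminality) and generic preservation of graded Betti numbers rather than your blanket base-change claim for all local cohomology modules, and instead of a full limit formula $\lct(X)=\lim_{p\to\infty}\fpt(R_\kappa)$ valid for all $p\gg 0$ it argues with $\lambda_m=\lct(X)-\tfrac{1}{m}$, $\delta_m=-a_d(R)-\tfrac{1}{m}$ and intersections of dense (open) subsets of $\Max\Spec(A)$, which is exactly what the cited results actually provide.
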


Furthermore, we give a new interpretation of $\fpt(R)$ for Gorenstein standard graded $K$-algebras in terms of regular sequences that preserve $F$-purity. We call such a sequence an $F$-pure regular sequence (see Definition \ref{Def F-pure seq})

\begin{theoremx}[see Theorem \ref{ThmFpureSeq}]
Let $(R,\m,K)$ be a Gorenstein standard graded $K$-algebra which is $F$-finite and $F$-pure over an infinite field. Let $d=\dim(R)$, and let $s= \fpt(R)$. Then, there exists a regular sequence consisting of $s$ linear forms $\ell_1,\ldots, \ell_s$ such that $R/(\ell_1,\ldots,\ell_j)$
is $F$-pure for every $j=1,\ldots,s$.
\end{theoremx}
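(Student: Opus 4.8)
The plan is to induct on $s=\fpt(R)$, which by Theorem \ref{ThmGor} equals $-a_d(R)$. This is a nonnegative integer, and it satisfies $s\le d$: since $R$ is Cohen--Macaulay and standard graded, $a_d(R)=\deg h_R(t)-d\ge -d$, where $h_R(t)$ is the $h$-polynomial of $R$. If $s=0$ the empty sequence works, so assume $s\ge 1$. Everything follows from one inductive step, which I would isolate as a lemma: \emph{if $A$ is a Gorenstein, $F$-finite, $F$-pure standard graded algebra over an infinite field $K$, with $\delta=\dim A\ge 1$ and $s_A:=-a_\delta(A)\ge 1$, then there is a linear form $\ell\in A_1$ which is a nonzerodivisor on $A$ and for which $A/\ell A$ is $F$-pure.}

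Granting this, I build the sequence as follows. Applying the lemma with $A=R$ produces $\ell_1$; then $R/\ell_1R$ is again Gorenstein (quotient of a Gorenstein ring by a nonzerodivisor), $F$-finite, $F$-pure, and standard graded over $K$, and from $0\to R(-1)\xrightarrow{\ell_1}R\to R/\ell_1R\to 0$, the Cohen--Macaulayness of $R$, and the local cohomology long exact sequence, one gets $a_{d-1}(R/\ell_1R)=a_d(R)+1$, hence $-a_{d-1}(R/\ell_1R)=s-1$. Iterating, I obtain linear forms $\ell_1,\dots,\ell_s$ of $R$ with $R_j:=R/(\ell_1,\dots,\ell_j)$ Gorenstein, $F$-finite, $F$-pure, standard graded, and $-a_{d-j}(R_j)=s-j$; the step is legitimate for $j=0,\dots,s-1$ because then $\dim R_j=d-j\ge d-(s-1)\ge 1$ and $-a_{d-j}(R_j)=s-j\ge 1$. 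Since each $\ell_j$ is a nonzerodivisor on $R_{j-1}$, the sequence $\ell_1,\dots,\ell_s$ is regular and every $R/(\ell_1,\dots,\ell_j)$ is $F$-pure.

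For the lemma I would work with the Frobenius (Cartier) structure at level $e=1$. Write $A=S/I$ with $S$ a polynomial ring over $K$. Since $A$ is Gorenstein, $\omega_A\cong A(-s_A)$, so $\Hom_A(F_*A,A)\cong F_*(\omega_A^{1-p})\cong F_*\!\big(A((p-1)s_A)\big)$ is a free $F_*A$-module of rank one; fix a generator $\Phi\colon F_*A\to A$, so every $A$-linear map $F_*A\to A$ has the form $F_*x\mapsto\Phi\big(F_*(ux)\big)$ for a unique $u\in A$. Tracking gradings, $\Phi\big(F_*(A_m)\big)\subseteq A_{(m-(p-1)s_A)/p}$ (zero unless the subscript is a nonnegative integer), so the only graded piece of $F_*A$ that can map onto $A_0=K$ is $F_*\!\big(A_{(p-1)s_A}\big)$. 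Because $A$ is $F$-pure there is a splitting, i.e.\ some $u$ with $\Phi\big(F_*(u)\big)=1$; decomposing $u$ into homogeneous parts and using the degree relation, $1$ already lies in $\Phi\big(F_*(A_{(p-1)s_A})\big)$, so the additive Frobenius-semilinear map $A_{(p-1)s_A}\to K$, $y\mapsto\Phi(F_*y)$, is onto, with kernel $V$ of codimension one. The elementary input is that over an infinite field of characteristic $p$ the $(p-1)$-st powers of linear forms span $S_{p-1}$: a functional killing every $\ell^{p-1}=\sum_{|\alpha|=p-1}\binom{p-1}{\alpha}a^\alpha x^\alpha$ must annihilate each $\binom{p-1}{\alpha}$, and these multinomial coefficients are units in $K$ by Kummer's theorem (adding nonnegative single-digit integers that sum to $p-1<p$ produces no carries). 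Hence $\{\ell^{p-1}:\ell\in A_1\}$ spans $A_{p-1}$, and since $A$ is standard graded, $\sum_{\ell\in A_1}\ell^{p-1}A_{(p-1)(s_A-1)}=A_{p-1}\cdot A_{(p-1)(s_A-1)}=A_{(p-1)s_A}$ (here $(p-1)s_A\ge p-1$). As $V$ is proper, $\ell^{p-1}A_{(p-1)(s_A-1)}\not\subseteq V$ for $\ell$ in a nonempty Zariski-open subset of $A_1$; intersecting with the nonempty open locus of nonzerodivisor linear forms — nonempty since $A$ is Cohen--Macaulay of positive dimension, so $\mathfrak m_A\notin\Ass A$ — and using that $K$ is infinite, I choose such an $\ell$, and then, since $w\mapsto\Phi\big(F_*(\ell^{p-1}w)\big)$ is a $K$-semilinear map with nonzero image in $A_0=K$, an accompanying $w\in A_{(p-1)(s_A-1)}$ with $\Phi\big(F_*(\ell^{p-1}w)\big)=1$. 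Finally, $\overline{\psi}\colon F_*(A/\ell A)\to A/\ell A$, $F_*\overline r\mapsto\overline{\Phi(F_*(\ell^{p-1}wr))}$, is well defined (replacing $r$ by $r+\ell t$ changes the value by $\overline{\ell\,\Phi(F_*(wt))}=0$) and $A/\ell A$-linear, with $\overline{\psi}(F_*\overline 1)=\overline 1$; so $A/\ell A$ is $F$-split, i.e.\ $F$-pure.

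The genuine obstacle here is conceptual, not computational. A general hyperplane section through the vertex need not preserve $F$-purity — witness $K[x,y]/(xy)$, where $\fpt=0$ — so the hypothesis $\fpt(R)\ge 1$ must be used in an essential way; and running a Bertini-type argument at a high Frobenius level $q=p^e$ is doomed, since $\{\ell^{q-1}:\ell\in S_1\}$ need not span $S_{q-1}$ once $e\ge 2$. The point that makes the proof work is precisely that one should \emph{not} pass to large $e$: the bound $-a_\delta(A)\ge 1$ is exactly what is needed to run the argument already at $e=1$, where $\binom{p-1}{\alpha}$ is a unit and the relevant Veronese stays non-degenerate. The rest is bookkeeping — checking that $A/\ell A$ carries along Gorensteinness and the shifted $a$-invariant so the induction closes — which I expect to be routine.
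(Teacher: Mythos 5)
Your proposal is correct in substance and shares the paper's skeleton: induct on $s=\fpt(R)=-a_d(R)$ (Theorem \ref{ThmGor}), produce one linear nonzerodivisor $\ell$ with $R/\ell R$ still $F$-pure, and use the long exact sequence of local cohomology to get $a_{d-1}(R/\ell R)=a_d(R)+1$, so the Gorenstein quotient has $F$-pure threshold $s-1$. Where you genuinely differ is the implementation of the one-step lemma. The paper argues extrinsically (Proposition \ref{PropFpureSeq}): writing $R=S/I$, Gorensteinness gives $(I^{[p]}:I)=fS+I^{[p]}$ with $\deg f=(p-1)(n+a_d(R))\ls (p-1)(n-1)$, a pigeonhole choice of monomials makes the coefficient of a fixed monomial outside $\n^{[p]}$ in $\ell_y^{p-1}f$ a nonzero polynomial $h(y)$, and a point avoiding $V(h)$ and the splitting prime $\cP(R)$ yields $\ell$, with $F$-purity of the quotient checked via Fedder's criterion. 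You work intrinsically with the rank-one generator $\Phi$ of $\Hom_R(R^{1/p},R)$ coming from $\omega_R\cong R(a_d(R))$, the grading of $\Phi$, and the fact that $(p-1)$-st powers of linear forms span the degree-$(p-1)$ piece; this is presentation-free and isolates nicely why $\fpt(R)\gs 1$ (i.e.\ $a_d(R)\ls -1$) is exactly what lets the argument run at level $e=1$ — the same role played by the paper's degree bound. One point needs tightening: over a merely $F$-finite (possibly imperfect) field, $y\mapsto\Phi(F_*y)$ is only $p^{-1}$-semilinear, so its kernel $V$ is a $K^p$-subspace rather than a $K$-hyperplane, and the asserted Zariski-openness of $\{\ell \mid \ell^{p-1}A_{(p-1)(s_A-1)}\not\subseteq V\}$ does not follow directly from your description (the defining functions of $V$ are not polynomial in the coordinates of $\ell$). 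The fix stays inside your framework: the condition on $\ell$ is equivalent to $\ell^{p-1}\notin C$, where $C=\{a\in A_{p-1}\mid a\,A_{(p-1)(s_A-1)}\subseteq I_1(A)\}$ is an honest $K$-subspace because $I_1(A)$ is an ideal; $C$ is proper by your splitting-degree argument (scalars can be absorbed into $w$, so only additivity of $\Phi$ is needed), and $v\mapsto \ell_v^{p-1}$ is a polynomial map, so the good locus is genuinely open and can be intersected with the open locus of linear nonzerodivisors using that $K$ is infinite. With that adjustment your argument is complete, and the remaining bookkeeping (Gorenstein persistence, the $a$-invariant shift, $s\ls d$) matches the paper's proof.
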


Theorem \ref{ThmFpureSeq} is in the spirit of Bertini type theorems and ladders on Fano varieties \cite{Ladder}. These theorems assert that `nice' singularities are still `good' after cutting by a general hyperplane. 
Among these nice singularities one encounters $F$-pure singularities of a projective variety \cite{BertiniFsing}.
Theorem \ref{ThmFpureSeq} gives a number of successive hyperplane cuts in $\Proj(R)$ that preserve $F$-purity. Furthermore, the hyperplane cuts remains globally $F$-pure.

Finally, we give explicit bounds for the projective dimension and the 
Castelnuovo-Mumford regularity of $R=S/I$, where $S=K[x_1,\ldots,x_n]$ is a polynomial ring over a field of positive characteristic, and $I \subseteq S$ is a homogeneous ideal such that $R$ is $F$-pure (see Theorem \ref{bounds}). 
In addition, we show that if an $F$-pure standard graded $K$-algebra $R=S/I$ satisfies Serre's $S_k$ condition, for some $k$ depending on the degrees of the generators of $I$, then $R$ is in fact Cohen-Macaulay (see Proposition \ref{PropSerre}).

\section{Background}\label{Background}

Throughout this article, $R$ denotes a commutative Noetherian ring with identity. A positively graded ring is a ring which admits a decomposition $R= \bigoplus_{i \gs 0} R_i$ of Abelian groups, with $R_{i} \cdot R_{j} \subseteq R_{i+j}$ for all $i$ and $j$. A {\it standard graded} ring is a positively graded ring such that $R_0=K$ is a field, $R = K[R_1]$ and $\dim_K (R_1) < \infty$, that is, $R$ is a finitely generated $K$-algebra, generated in degree one. We use the notation $(R,\m,K)$ to denote a standard graded $K$-algebra, where $\m = \bigoplus_{i \gs 1} R_i$ is the irrelevant maximal ideal. 

Suppose that $R$ is a standard graded $K$-algebra. A {\it graded module} is an $R$-module $M = \bigoplus_{n \in \ZZ} M_n$ such that $R_iM_j \subseteq M_{i+j}$. An $R$-homomorphism $\varphi:M \to N$ between graded $R$-modules is called {\it homogeneous of degree $c$} if $\varphi(M_i) \subseteq N_{i+c}$ for all $i \in \ZZ$. The set of all graded homomorphisms $M \to N$ of all degrees form a graded submodule of $\Hom_R(M,N)$. In general, these two modules are not the same, but they coincide when $M$ is finitely generated \cite{BrHe}.  Throughout this article, $E_R(K)$ will always denote the graded Matlis dual of $K$ in $R$. 
The grading  is given by $E_R(K)_i=\Hom_K(R_{-i},K)$ for all $i\in\ZZ.$

Let $I$ be a homogeneous ideal generated by the forms $f_1,\ldots,f_\ell\in R$. Consider the $\check{\mbox{C}}$ech complex, $\Cech^\bullet(\underline{f};R)$:
$$
0\to R\to \bigoplus_i R_{f_i}\to\bigoplus_{i,j} R_{f_i f_j}\to \ldots \to R_{f_1 \cdots f_\ell} \to 0,
$$
where $\Cech^i(\underline{f};R)=\bigoplus \limits_{1 \ls j_1<\ldots<j_i\ls \ell} R_{f_{j_1}\cdots f_{j_i}}$ and the homomorphism 
in every summand is a localization map with an appropriate sign.
Let $M$ be a graded $R$-module. 
We define the {\it $i$-th local cohomology of $M$ with support in $I$} by
$H^i_I(M):=H^i(\Cech^\bullet(\underline{f};R)\otimes_R M)$.
The local cohomology module $H^i_I(M)$ does not depend on the choice of generators, $f_1,\ldots,f_\ell$, of $I$.
Moreover, it only depends on the radical of $I$.
Since $M$ is a graded $R$-module and $I$ is homogeneous, the $i$-th local cohomology $H^i_I(M)$ is graded as well. 
Furthermore, if $\varphi:M \to N$ is a homogeneous $R$-module homomorphism of degree $d$, then the induced $R$-module map $H^i_I(M) \to H^i_I(N)$ is homogeneous of degree $d$ as well.

Assume that $R$ has positive characteristic $p$. For $e \in \NN$, let $F^e:R \to R$ denote the $e$-th iteration of the Frobenius endomorphism on $R$. If $R$ is reduced, $R^{1/p^e}$ denotes the ring of $p^e$-th roots of $R$ and we can identify $F^e$ with the inclusion $R \subseteq R^{1/p^e}$. When $R$ is standard graded, we view $R^{1/p^e}$ as a $\frac{1}{p^e}\NN$-graded module. In fact, if $r^{1/p^e} \in R^{1/p^e}$, then we  write $r \in R$ as $r=r_{d_1}+\ldots+r_{d_n}$, with $r_{d_j} \in R_{d_j}$. Then $r^{1/p^e} = r_{d_1}^{1/p^e}+\ldots+r_{d_n}^{1/p^e}$, and each $r_{d_j}^{1/p^e}$ has degree $d_j/p^e$. Similarly, if $M$ is a $\ZZ$-graded $R$-module, we have that $M^{1/p^e}$ is a $\frac{1}{p^e}\ZZ$-graded $R$-module. Here $M^{1/p^e}$ denotes the $R$-module which has the same additive structure as $M$, and multiplication defined by $r \cdot m^{1/p^e} := (r^{p^e}m)^{1/p^e}$ for all $r \in R$ and $m^{1/p^e} \in M^{1/p^e}$.

\begin{remark} \label{frac_grading} When $R$ is reduced, for any integer $e \gs 1$ we have an inclusion $R \hookrightarrow R^{1/p^e}$. As a submodule of $R^{1/p^e}$, $R$  inherits a natural $\frac{1}{p^e} \NN$ grading, which is compatible with the standard grading.
\end{remark}

If $\sqrt{I} = \m$ and $M$ is finitely generated, the modules $H^i_\m(M)$ are Artinian. Therefore, the following numbers are well defined.

\begin{definition} 
Let $M$ be a $\frac{1}{p^e}\NN$-graded finitely generated $R$-module.
For $i \in \NN$, if $H^i_\m(M) \ne 0$ then the $a_i$-invariant of $M$ is defined as
\[
\ds a_i(M) := \max\left\{\alpha \in \frac{1}{p^e}\ZZ \mid H^i_\m(M)_\alpha \ne 0\right\}.
\]
If $H^i_\m(M) = 0$, we set $a_i(M) := -\infty$.
\end{definition}

\begin{remark} 
With the grading introduced above, for a finitely generated graded $R$-module, $M$, we have that $a_i(M^{1/p^e}) = a_i(M)/p^e$ for all $i \in \NN$. In fact, $H^i_\m(M^{1/p^e}) \cong H^i_\m(M)^{1/p^e}$ since the functor $(-)^{1/p^e}$ is exact.
\end{remark}

\begin{definition} \label{DefnFSing} 
Let $R$ be a Noetherian ring of positive characteristic $p$. We say that $R$ is  {\it $F$-finite} if it is a finitely generated $R$-module via the action induced by the Frobenius endomorphism $F: R \to R$.  When $R$ is reduced, this is equivalent to say that $R^{1/p}$ is a finitely generated $R$-module.  If $(R,\m,K)$ is a standard graded $K$-algebra, then $R$ is $F$-finite if and only if $K$ is $F$-finite, that is, if and only if $[K:K^p]< \infty$. $R$ is called {\it $F$-pure} if $F$ is a pure homomorphism, that is $F \otimes 1: R \otimes_R M \to R \otimes_R M$ is injective for all $R$-modules $M$. $R$ is  called {\it $F$-split} if $F$ is a split monomorphism. 
\end{definition}
\begin{remark} \label{FpureNeg} 
Let $(R,\m,K)$ be a standard graded $F$-pure $K$-algebra. Then necessarily $a_i(R) \ls 0$ for all $i \in \ZZ$ \cite[Proposition $2.4$]{HRFpurity}. 

\end{remark}
\begin{remark}\label{FpureFsplit}
If $R$ is an $F$-pure ring, $F$ itself is injective and $R$ must be a reduced ring. We have that $R$ is $F$-split if and only if $R$ is a direct summand of $R^{1/p}$. If $R$ is an $F$-finite ring, $R$ is $F$-pure if and only $R$ is $F$-split (see \cite[Corollary $5.3$]{HRFpurity}). Since, throughout this article, we assume that $R$ is $F$-finite, we use the word $F$-pure to refer to both.
\end{remark}

\begin{definition}
An $F$-finite reduced ring  $R$ is strongly $F$-regular if for any element $f\in R \setminus \{0\}$, there exists $e\in \NN$ such that the inclusion $f^{1/p^e}R\to R^{1/p^e}$ splits.
\end{definition}

\begin{definition}
Let $S=K[x_1,\ldots,x_n]$ be a polynomial ring over an $F$-finite field.
The $e$-trace map, $\Phi_e:S^{1/p^e}\to S,$ is defined by
$$
\Phi_e\left(x^{\alpha_1/p^e}_1\cdots x^{\alpha_n/p^e}_n\right)=
\begin{cases} 
      x^{\frac{\alpha_1-p^e+1}{p^e}}_1\cdots x^{\frac{\alpha_n-p^e+1}{p^e}}_n & \alpha_1\equiv \ldots\equiv \alpha_n\equiv p^e-1 \hbox{ mod }p^e\\
      0 & \hbox{otherwise}
\end{cases}
$$
\end{definition}

We note that  $\Phi_{e'}\circ \Phi^{1/p^{e'}}_e=\Phi_{e'+e}$. Furthermore, $\Phi_e$ generates $\Hom_S(S^{1/p^e},S)$ as an $S^{1/p^e}$-module.

\begin{definition}[\cite{KarlCentersFpurity}]
Suppose that $R$ is an $F$-pure ring.
Let $\phi:R^{1/p^e}\to R$ be an $R$-homomorphism and let $J \subseteq R$ be an ideal. We say that $J$ is  {\it $\phi$-compatible} if $\phi(J^{1/p^e}) \subseteq J$. An ideal $J$ is said to be compatible if it is $\phi$-compatible for all $R$-linear maps $\phi:R^{1/p^e} \to R$ and all $e \in \NN$.
\end{definition}

We end this section by recalling an explicit description of $\Hom_R(R^{1/p^e},R)$ discovered by Fedder \cite{Fedder} that we use in the following sections.

\begin{remark}[{\cite[Corollary 1.5]{Fedder}}]\label{CorrespFedder}
Let $S$ be a polynomial ring over an $F$-finite field, and let  $\Phi_e: S^{1/p^e}\to S$ be the trace map. Let $I \subseteq S$ be a homogeneous ideal, and let $R=S/I$. We have a graded isomorphim
$$
\frac{(IS^{1/p^e}:_{S^{1/p^e}}I^{1/p^e})}{IS^{1/p^e}}\cong \Hom_R(R^{1/p^e},R)
$$
given by the correspondence $f^{1/p^e} \mapsto \varphi_{f,e}$, where
$\varphi_{f,e}: R^{1/p^e}\to R$ is defined by $\varphi_{f,e}(\overline{r}^{1/p^e})=\overline{\Phi_e(f^{1/p^e} r)}.$
\end{remark}

Thanks to the correspondence in Remark \ref{CorrespFedder}, we can make the following observation
\begin{remark} \label{compatibleFedder}
Let $S$ be a polynomial ring over an $F$-finite field. Let $I\subseteq S$ be a homogeneous ideal, and let $R=S/I$. Let $J\subseteq R$ be a homogeneous ideal, and let $\widetilde{J}$ denote its pullback in $S$.
We have that $J$ is compatible if and only if
$(I^{[p^e]}:I)\subseteq (\widetilde J^{[p^e]}:\widetilde J)$ for all $e\gs 1.$
\end{remark}

\section{Properties of $F$-thresholds}\label{PreFPT}

In this section we introduce basic definitions and properties for the diagonal $F$-threshold and the $F$-pure threshold.

\begin{definition}[\cite{HMTW}]
Suppose that $(R,\m,K)$ is  an  $F$-finite standard graded $K$-algebra. If $\nu_I(p^e)=\max\{r \in \NN \mid I^r\not\subseteq \m^{[p^e]}\}$, we define
$$c^\m_{-}(I)=\liminf\limits_{e\to \infty}\frac{\nu_I(p^e)}{p^e},\hbox{ and }c^\m_{+}(I)=\limsup\limits_{e\to \infty}\frac{\nu_I(p^e)}{p^e}.$$
If the two limits coincide, we denote the common value by $c^\m(I)$ and call it the $F$-threshold of $I$ with respect to $\m$. When $I=\m$, we call $c^\m(\m)$ the diagonal $F$-threshold of $R$, and we denote it by $c^\m(R)$. Similarly, we denote $c^\m_{-}(R) = c^\m_{-}(\m)$ and $c^\m_+(R) = c^\m_+(\m)$.
\end{definition}

We point out that the previous limit exits for $F$-pure rings \cite[Lemma $2.3$]{HMTW}.

\begin{definition}[\cite{TW2004}]
Let $(R,\m,K)$ be either a standard graded $K$-algebra or a local ring which is $F$-finite and $F$-pure, and let $I \subseteq R$ be an ideal (homogeneous in the former case). For a real number $\lambda\gs 0$, we say that $(R,I^\lambda)$ is $F$-pure if for every $e\gg 0$, there exists an element $f\in I^{\lfloor (p^e-1)\lambda\rfloor}$ such that the inclusion of $R$-modules $f^{1/p^e}R\subseteq R^{1/p^e}$ splits.
\end{definition} 
\begin{remark} Note that $(R,I^0) = (R,R)$ being $F$-pure simply means that $R$ is $F$-pure, according to Definition \ref{DefnFSing}.
\end{remark}

\begin{definition}[\cite{TW2004}]
Let $(R,\m,K)$ be either a standard graded $K$-algebra or a local ring which is $F$-finite and $F$-pure, and let $I \subseteq R$ be an ideal (homogeneous in the former case). The $F$-pure threshold of $I$ is defined by
$$
\fpt(I)=\sup\{\lambda\in \RR_{\gs 0} \mid  (R,I^{\lambda})\hbox{ is }F\hbox{-pure}\}.
$$
When $I=\m$, we denote the $F$-pure threshold by $\fpt(R)$.
\end{definition} 

\begin{definition}[\cite{AE}]
Let $(R,\m,K)$ be either a standard graded $K$-algebra or a local ring which is $F$-finite and $F$-pure. 
We define
$$
I_e(R):=\{r\in R \mid \varphi(r^{1/\p^e})\in\m\hbox{ for every }\varphi\in \Hom(R^{1/p^e},R)\}.
$$
In addition, we define the {\it splitting prime} of $R$ as $\cP(R) := \bigcap_e I_e(R)$ and the {\it splitting dimension} of $R$ to be $\sdim(R):=\dim(R/\cP(R))$.
\end{definition}

\begin{remark}\label{RemIeSplit}
We note that for a homogeneous element $r$,
 $r\not\in I_e(R)$ if and only if there is a map $\varphi\in \Hom_R(R^{1/p^e},R)$ such that $\varphi(r^{1/p^e})=1$.
\end{remark}

The following proposition gives basic  properties of the splitting prime for graded algebras. We include  details of the proof in the graded case for sake of completeness.
\begin{proposition}[\cite{AE}] \label{splitting prime properties}
Let $(R,\m,K)$ be an $F$-finite $F$-pure standard graded $K$-algebra.  Then
\begin{enumerate}
\item \label{basic1} $I_e(R)$ and $\cP(R)$ are homogeneous ideals.
\item \label{basic2} $\cP(R)$ is a prime ideal.
\item \label{basic3} $\cP(R)$ is the largest compatible homogeneous ideal of $R$, i.e. the largest homogeneous ideal of $R$ such that $\varphi(\cP(R)^{1/p^e}) \subseteq \cP(R)$ for all $\varphi \in \Hom_R(R^{1/p^e},R)$.
\item \label{basic4} $R/\cP(R)$ is strongly $F$-regular.
\item \label{basic5} $\cP(R)_\m = \cP(R_\m)$.
\end{enumerate}
\end{proposition}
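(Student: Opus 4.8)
The plan is to establish the five statements essentially in the order listed, since each relies on the previous ones. For \eqref{basic1}, I would argue that each $I_e(R)$ is homogeneous: if $r = r_{c_1} + \cdots + r_{c_k}$ is the homogeneous decomposition of $r \notin I_e(R)$, then there is $\varphi \in \Hom_R(R^{1/p^e},R)$ with $\varphi(r^{1/p^e}) \notin \m$. Using the graded module structure on $\Hom_R(R^{1/p^e},R)$ (the set of graded maps equals the whole Hom since $R^{1/p^e}$ is finitely generated over $R$), decompose $\varphi$ into its homogeneous components and observe that the unit-valued component of $\varphi(r^{1/p^e})$ must come from a homogeneous piece, yielding a homogeneous $\varphi_j$ and a homogeneous summand $r_{c_i}$ with $\varphi_j(r_{c_i}^{1/p^e}) \notin \m$; hence some $r_{c_i} \notin I_e(R)$. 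Therefore $I_e(R)$ is homogeneous, and so is the intersection $\cP(R)$. That $I_e(R)$ is an ideal follows from $R^{1/p^e}$-linearity of the maps and the fact that $\m$ is an ideal.

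For \eqref{basic2}, I would show $\cP(R)$ is prime by the usual splitting-prime argument adapted to the graded setting: suppose $ab \in \cP(R)$ with $a,b$ homogeneous and $a \notin \cP(R)$; then $a \notin I_e(R)$ for some $e$, so there is $\varphi$ with $\varphi(a^{1/p^e}) = 1$ (Remark \ref{RemIeSplit}). Given any $e'$ and any $\psi \in \Hom_R(R^{1/p^{e'}},R)$, precompose appropriately: the composite $\psi \circ (\varphi \cdot b)^{1/p^{e'}}$-type map built from $\psi$, the $p^{e'}$-th root of $\varphi$, and multiplication by $b^{1/p^{e+e'}}$ sends a root of $b$ forward; using $ab \in \cP(R) \subseteq I_{e+e'}(R)$ one deduces $\psi(b^{1/p^{e'}}) \in \m$ for all such $\psi$, i.e. $b \in I_{e'}(R)$ for all $e'$, so $b \in \cP(R)$. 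The bookkeeping with the maps $\Phi_{e'} \circ \Phi_e^{1/p^{e'}} = \Phi_{e+e'}$ and composition of $R$-linear maps is the technical heart here. For \eqref{basic3}, compatibility of $\cP(R)$ is immediate from its definition as $\bigcap_e I_e(R)$ together with the observation that $\varphi(I_e(R)^{1/p^e}) \subseteq \m$ and a two-step composition argument showing $\varphi(\cP(R)^{1/p^e}) \subseteq I_{e'}(R)$ for all $e'$; maximality follows because any compatible homogeneous proper ideal $J$ must avoid having an element on which some $\varphi$ restricts to a splitting, forcing $J \subseteq I_e(R)$ for all $e$.

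Finally, \eqref{basic4} follows by checking the strong $F$-regularity criterion on $R/\cP(R)$: for a nonzero homogeneous $\bar c$, lift to $c \notin \cP(R)$, pick $e$ with $\varphi(c^{1/p^e}) = 1$ for some $\varphi$, and note that $\varphi$ descends to $R/\cP(R)$ precisely because $\cP(R)$ is compatible by \eqref{basic3}, giving a splitting of $\bar c^{1/p^e}(R/\cP(R)) \hookrightarrow (R/\cP(R))^{1/p^e}$; a standard reduction handles non-homogeneous elements. For \eqref{basic5}, I would use that forming $I_e$ and taking the splitting prime commute with localization at $\m$: since $R$ is already local-like in the graded sense (unique homogeneous maximal ideal) and $\Hom_R(R^{1/p^e},R)$ localizes well as $R^{1/p^e}$ is module-finite over $R$, one gets $I_e(R)_\m = I_e(R_\m)$, and intersecting over $e$ gives the claim. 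The main obstacle I anticipate is the careful manipulation of compositions of Frobenius-linear maps in the graded category for \eqref{basic2}—keeping track of which maps are $R^{1/p^e}$-linear versus $R$-linear and ensuring the degree bookkeeping is consistent—while the remaining parts are comparatively formal once \eqref{basic1}–\eqref{basic3} are in place.
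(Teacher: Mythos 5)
There is a genuine logical gap in your argument for part (\ref{basic1}). You prove the implication ``if $r \notin I_e(R)$, then some homogeneous component $r_{c_i} \notin I_e(R)$.'' That statement is true of \emph{every} ideal (its contrapositive just says that a sum of elements of $I_e(R)$ lies in $I_e(R)$), so it does not establish homogeneity. What is needed is the reverse direction: if $r \in I_e(R)$, then \emph{every} homogeneous component of $r$ lies in $I_e(R)$. Your ingredients do yield this, but you must run them the other way, exactly as the paper does: since $R^{1/p^e}$ is a finitely generated graded $R$-module, every $\varphi \in \Hom_R(R^{1/p^e},R)$ is a finite sum of homogeneous maps, so membership in $I_e(R)$ can be tested against homogeneous $\varphi$; for such a $\varphi$ of degree $k$ and $r = r_{c_1}+\cdots+r_{c_k} \in I_e(R)$, the element $\varphi(r^{1/p^e}) = \sum_i \varphi(r_{c_i}^{1/p^e}) \in \m$ has its summands in pairwise distinct degrees $c_i/p^e + k$, and since $\m$ is homogeneous each $\varphi(r_{c_i}^{1/p^e}) \in \m$; hence each $r_{c_i} \in I_e(R)$. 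With this correction, homogeneity of $\cP(R)$ follows as you say.

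The rest of your plan is sound and, for (\ref{basic2})--(\ref{basic4}), essentially reproduces the standard Aberbach--Enescu/Schwede arguments that the paper simply cites rather than rewrites; spelling out the composition $\psi \circ \varphi^{1/p^{e'}}$ (together with multiplication by the appropriate root of a power of $b$) is indeed the technical heart, and your direct derivation of $b \in I_{e'}(R)$ for all $e'$ is a fine variant of the usual contradiction argument. For (\ref{basic5}) you take a different route from the paper: the paper deduces $\cP(R)_\m = \cP(R_\m)$ from compatibility of $\cP(R)_\m$ plus strong $F$-regularity of $R_\m/\cP(R)_\m$ (a strongly $F$-regular ring admits no proper nonzero compatible ideal), whereas you localize the $I_e$ directly. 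Your route works, but note that localization does not commute with infinite intersections in general, so ``intersecting over $e$'' needs one more line: because $\m R_\m \cap R = \m$ and $\Hom$ localizes ($R^{1/p^e}$ is module-finite), one has the contraction statement $r/1 \in I_e(R_\m) \Leftrightarrow r \in I_e(R)$ for $r \in R$, which is what lets you pass the intersection through the localization. Also, in (\ref{basic4}) the ``standard reduction'' for non-homogeneous elements deserves a word (e.g.\ invoke the criterion that splitting along a single homogeneous $c$ with $(R/\cP(R))_c$ strongly $F$-regular suffices), since Remark \ref{RemIeSplit} as stated only produces $\varphi(c^{1/p^e})=1$ for homogeneous $c$.
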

\begin{proof} (\ref{basic1}) Let $e \gs 1$. Since both $R^{1/p^e}$ and $R$ are graded, and $R^{1/p^e}$ is a finitely generated $R$-module, we have that every homomorphism $R^{1/p^e} \to R$ is a sum of graded homomorphisms. Therefore, in the definition of $I_e(R)$ above, we can consider only graded homomorphisms. Let $r = r_0+r_1+\ldots+r_n \in I_e(R)$, with $r_i$ of degree $d_i$. Let $\varphi \in \Hom_R(R^{1/p^e},R)$ be homogeneous of degree $k$. Then
\[
\ds \varphi(r^{1/p^e}) = \varphi(r_0^{1/p^e})+\ldots+\varphi(r_n^{1/p^e}) \in \m,
\]
and each $\varphi(r_i^{1/p^e})$ now has degree $d_i+k$. Since $\m$ is 
homogeneous, we get $\varphi(r_i^{1/p^e}) \in \m$ for all $i=1,\ldots,n$, 
showing that $r_i \in I_e(R)$. Then $I_e(R)$ is a homogeneous ideal. 
We have that $\cP(R)$ is homogeneous is clear from its definition. The proofs of (\ref{basic2}), (\ref{basic3}) and (\ref{basic4}) are completely analogous to the ones in \cite[Theorem 3.3, and Theorem 4.7]{AE} and \cite[Remark 4.4]{KarlCentersFpurity} for local rings. For (\ref{basic5}), 
we note that $\varphi(\cP(R)_\m ^{1/p^e})\subseteq\cP(R)_\m$ for every 
$\varphi \in\Hom(R^{1/p^e}_\m,R_\m)$ because $R$ is $F$-finite.
Since $R_\m/(\cP(R)_\m)$ is strongly $F$-regular by (\ref{basic4}), we have that $\cP(R)_\m=\cP(R_\m).$
\end{proof}

\begin{definition}
Let $(R,\m,K)$ be an $F$-finite $F$-pure standard graded $K$-algebra. 
Let $J\subseteq R$ be a homogeneous ideal. Then, we define 
$$
b_J(p^e)=\max\{r \mid J^r\not\subseteq I_e(R)\}.
$$
\end{definition}

\begin{lemma}\label{Lemma Increase be}
Let $(R,\m,K)$ be a standard graded $K$-algebra which is $F$-finite and $F$-pure. Let $J\subseteq R$ be a homogeneous ideal. Then, $p\cdot b_J(p^e) \ls b_J(p^{e+1})$. 
\end{lemma}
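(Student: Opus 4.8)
The plan is to unwind the definition of $b_J(p^e)$ and exploit the multiplicativity of the trace maps $\Phi_{e'}\circ\Phi_e^{1/p^{e'}}=\Phi_{e'+e}$, which at the level of Fedder's correspondence says that composing maps $R^{1/p^e}\to R$ with $(R^{1/p})\text{-linear}$ maps $R^{1/p^{e+1}}\to R^{1/p^e}$ produces all maps $R^{1/p^{e+1}}\to R$. Set $r=b_J(p^e)$, so that $J^r\not\subseteq I_e(R)$. By definition of $I_e(R)$ (and Remark \ref{RemIeSplit}, using that $J$ is homogeneous), there is a homogeneous element $u\in J^r$ and a map $\varphi\in\Hom_R(R^{1/p^e},R)$ with $\varphi(u^{1/p^e})=1$. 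The goal is to produce a homogeneous element of $J^{pr}$ and a map in $\Hom_R(R^{1/p^{e+1}},R)$ sending it to $1$, which would give $b_J(p^{e+1})\ge pr=p\cdot b_J(p^e)$.

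The key step is the following. Since $R$ is $F$-pure (equivalently $F$-split), the map $F\colon R\to R^{1/p}$ splits; fix a splitting $\psi\colon R^{1/p}\to R$, so $\psi(1)=1$. Applying $(-)^{1/p^e}$ gives an $R^{1/p^e}$-linear (in particular $R$-linear) map $\psi^{1/p^e}\colon R^{1/p^{e+1}}\to R^{1/p^e}$ with $\psi^{1/p^e}(1)=1$. Now consider the element $v=u^p\in J^{pr}$ (here I use $J^r\cdot J^r\subseteq J^{2r}$ and more generally $(J^r)^p\subseteq J^{pr}$). Then $v^{1/p^{e+1}}=u^{1/p^e}$ inside $R^{1/p^e}\subseteq R^{1/p^{e+1}}$, wait — more carefully, $v^{1/p^{e+1}}=(u^p)^{1/p^{e+1}}=u^{1/p^{e+1}\cdot p}=u^{1/p^e}$, viewing both as elements of $R^{1/p^{e+1}}$; since $u\in R$, $u^{1/p^e}\in R^{1/p^e}$, and $\psi^{1/p^e}(u^{1/p^e})=u\cdot\psi^{1/p^e}(1)=u$ by $R^{1/p^e}$-linearity of $\psi^{1/p^e}$ — hmm, that is not quite what I want either. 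Let me instead take $v=u\cdot z^p$ where... Actually the cleanest route: the composite $\varphi\circ\psi^{1/p^e}\colon R^{1/p^{e+1}}\to R$ is $R$-linear, and I want to feed it an element of $(J^{pr})^{1/p^{e+1}}$ landing on $1$. Take $w^{1/p^{e+1}}$ with $w=u\cdot a^p$ for a suitable $a$; then $\psi^{1/p^e}(w^{1/p^{e+1}})=\psi^{1/p^e}(u^{1/p^{e+1}}a^{1/p^e})=a^{1/p^e}\psi^{1/p^e}(u^{1/p^{e+1}})$. This still requires knowing $\psi^{1/p^e}(u^{1/p^{e+1}})$, which is where the real content lies: we need $u^{1/p^{e+1}}$ to map under some $R^{1/p^e}$-linear map $R^{1/p^{e+1}}\to R^{1/p^e}$ onto something whose image under $\varphi$ is a unit times an element of $(J^{(p-1)r})^{1/p^e}$, so that multiplying by $u^{1/p^e}\in(J^r)^{1/p^e}$ recovers $J^{pr}$.

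The right formulation is: if $\varphi(u^{1/p^e})=1$ with $u\in J^r$, choose any $R^{1/p^e}$-linear $\theta\colon R^{1/p^{e+1}}\to R^{1/p^e}$ with $\theta(u^{1/p^{e+1}\cdot 1})$... — at this point the honest statement is that one should apply the already-established fact $b_J(p^e)\ge $ (something) iteratively, or better, mimic the standard argument that $\nu_J$ is ``almost multiplicative.'' Concretely: $J^{pr}\not\subseteq I_{e+1}(R)$ because $J^{pr}=(J^r)^p\supseteq\{u^p : u\in J^r\}$ componentwise, and for $u\in J^r$ with associated splitting $\varphi$ of $u^{1/p^e}$, the element $u^p$ satisfies $(u^p)^{1/p^{e+1}}=u^{1/p^e}$, and the map $\varphi\circ\psi^{1/p^e}\in\Hom_R(R^{1/p^{e+1}},R)$ sends $(u^p)^{1/p^{e+1}}=u^{1/p^e}\cdot 1^{1/p}$... no. I will present it as: $\psi^{1/p^e}\colon R^{1/p^{e+1}}\to R^{1/p^e}$ restricts on the submodule $R^{1/p^e}\subseteq R^{1/p^{e+1}}$ to the identity (since $\psi$ splits $F$, $\psi^{1/p^e}$ splits $R^{1/p^e}\hookrightarrow R^{1/p^{e+1}}$, hence is the identity on that submodule as it is $R^{1/p^e}$-linear and fixes $1$), so $\psi^{1/p^e}((u^p)^{1/p^{e+1}})=\psi^{1/p^e}(u^{1/p^e})=u^{1/p^e}$, whence $(\varphi\circ\psi^{1/p^e})((u^p)^{1/p^{e+1}})=\varphi(u^{1/p^e})=1$. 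Since $u^p\in J^{pr}$, this shows $J^{pr}\not\subseteq I_{e+1}(R)$, i.e. $b_J(p^{e+1})\ge pr=p\cdot b_J(p^e)$.

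\medskip

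I expect the main obstacle to be purely bookkeeping: keeping straight the three gradings on $R$, $R^{1/p^e}$, $R^{1/p^{e+1}}$ and checking that the element $u$ witnessing $J^r\not\subseteq I_e(R)$ can be taken homogeneous (this is exactly Remark \ref{RemIeSplit} together with homogeneity of $I_e(R)$ from Proposition \ref{splitting prime properties}(\ref{basic1})), and verifying that $\psi^{1/p^e}$ is genuinely $R^{1/p^e}$-linear and restricts to the identity on $R^{1/p^e}$ — this uses only that $\psi(1)=1$ and $R^{1/p^e}$-linearity, so the submodule $R^{1/p^e}=R^{1/p^e}\cdot 1$ is fixed pointwise. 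No deep input beyond $F$-purity of $R$ (to get the splitting $\psi$ of Frobenius) and the containment $(J^r)^p\subseteq J^{pr}$ is needed.
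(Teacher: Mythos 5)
Your final argument is correct and is essentially the paper's own proof: both take a homogeneous $f\in J^{b_J(p^e)}\setminus I_e(R)$ with a splitting $\varphi(f^{1/p^e})=1$, use $F$-purity to get an $R^{1/p^e}$-linear splitting $R^{1/p^{e+1}}\to R^{1/p^e}$ fixing $R^{1/p^e}$, and compose to see that $f^p\in J^{p\cdot b_J(p^e)}$ lies outside $I_{e+1}(R)$ since $(f^p)^{1/p^{e+1}}=f^{1/p^e}$. The intermediate false starts in your write-up are harmless; the closing paragraph is the complete argument.
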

\begin{proof}
Let  $f\in J^{b_J (p^e)}\smallsetminus I_e(R)$ be a homogeneous element.
Then, $Rf^{1/p^e}\to R^{1/p^e}$ splits as map of $R$-modules. 
Since $R$ is $F$-pure, there is a splitting $\alpha:R^{1/p^{e+1}}\to R^{1/p^e}$ as $R^{1/p^e}$-modules.
Then, 
$$
Rf^{1/p^e}\to R^{1/p^{e+1}}\FDer{\alpha} R^{1/p^e}
$$
splits as morphism of $R$-modules. Therefore, $Rf^{p/p^{e+1}}\to R^{1/p^{e+1}}$
splits as a map of $R$-modules. Hence, $f^p\in J^{p\cdot b_J(p^e)}\smallsetminus I_{e+1}(R)$, and so, $p\cdot b_J(p^e)\ls b_J(p^{e+1})$.
\end{proof}

We now present a characterization of the $F$-pure threshold that may be known to experts (see \cite[Key Lemma]{DanielMRL} for principal ideals). 
However, we were not able to find it in the literature  in the generality we need. We present the proof for the sake of completeness. This characterization is a key part for the proof of Theorem \ref{Thm a-inv}.

\begin{proposition}\label{PropEquivFPT}
Let $(R,\m,K)$ be a standard graded $K$-algebra which is $F$-finite and $F$-pure. Let $J\subseteq R$ be a homogeneous ideal. Then
$$
\fpt(J)=\lim\limits_{e\to \infty} \frac{b_J(p^e)}{p^e}.
$$
\end{proposition}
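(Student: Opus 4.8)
The plan is to show that the limit on the right-hand side exists and equals $\fpt(J)$, by comparing the quantities $b_J(p^e)$ (governing which powers $J^r$ fail to lie in the splitting ideal $I_e(R)$) with the $F$-purity of the pairs $(R, J^\lambda)$. First I would observe that $\frac{b_J(p^e)}{p^e}$ is a bounded, essentially monotone sequence: boundedness follows because $J^r \subseteq \m^r$ and $\m^{cp^e} \subseteq \m^{[p^e]} \subseteq I_e(R)$ for $c = \dim_K(R_1)$ or so, while Lemma \ref{Lemma Increase be} ($p \cdot b_J(p^e) \le b_J(p^{e+1})$) gives that $\frac{b_J(p^e)}{p^e} \le \frac{b_J(p^{e+1})}{p^{e+1}}$, so the limit exists and equals $\sup_e \frac{b_J(p^e)}{p^e}$.

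Next I would unwind the definitions to relate $b_J(p^e)$ to splittings. By Remark \ref{RemIeSplit}, a homogeneous element $f \notin I_e(R)$ precisely when some $\varphi \in \Hom_R(R^{1/p^e}, R)$ sends $f^{1/p^e}$ to $1$, which is exactly the statement that $f^{1/p^e}R \subseteq R^{1/p^e}$ splits. So $b_J(p^e) = \max\{r \mid \text{there is a homogeneous } f \in J^r \text{ with } f^{1/p^e}R \hookrightarrow R^{1/p^e} \text{ split}\}$ (using that $I_e(R)$ is homogeneous, Proposition \ref{splitting prime properties}(\ref{basic1}), so the maximal non-containing power is witnessed by a homogeneous element). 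For the inequality $\fpt(J) \ge \lim_e \frac{b_J(p^e)}{p^e}$: given $\lambda < \lim_e \frac{b_J(p^e)}{p^e}$, for all $e \gg 0$ we have $\lfloor (p^e-1)\lambda \rfloor \le b_J(p^e)$, so there is a homogeneous $f \in J^{b_J(p^e)} \subseteq J^{\lfloor(p^e-1)\lambda\rfloor}$ with $f^{1/p^e}R \subseteq R^{1/p^e}$ split; hence $(R, J^\lambda)$ is $F$-pure and $\lambda \le \fpt(J)$.

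For the reverse inequality $\fpt(J) \le \lim_e \frac{b_J(p^e)}{p^e}$: if $\lambda < \fpt(J)$, then for $e \gg 0$ there is $f \in J^{\lfloor(p^e-1)\lambda\rfloor}$ with $f^{1/p^e}R \subseteq R^{1/p^e}$ split, i.e. $f \notin I_e(R)$; we may take $f$ homogeneous after extracting a homogeneous component (a splitting of $f^{1/p^e}R$ restricts to one of $(f_d)^{1/p^e}R$ for a suitable degree-$d$ component). Then $\lfloor(p^e-1)\lambda\rfloor \le b_J(p^e)$, so $\lambda \le \liminf_e \frac{\lfloor(p^e-1)\lambda\rfloor}{p^e} \le \lim_e \frac{b_J(p^e)}{p^e}$. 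Letting $\lambda \to \fpt(J)$ finishes it.

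The main obstacle I anticipate is the bookkeeping with homogeneous representatives and floors: one must make sure that at each step the witnessing element can be taken homogeneous (this is where homogeneity of $I_e(R)$, and the fact that a split injection $f^{1/p^e}R \hookrightarrow R^{1/p^e}$ survives after passing to a graded piece of $f$, both get used), and that the elementary inequalities $\lfloor(p^e-1)\lambda\rfloor \le b_J(p^e)$ versus $\lambda < b_J(p^e)/p^e$ are compatible in the limit — which is routine but needs the estimate $\lfloor(p^e-1)\lambda\rfloor \le (p^e-1)\lambda < p^e\lambda$ and, going the other way, $b_J(p^e) < p^e \lambda \le$ nothing useful, so one argues via: $\lambda < b_J(p^e)/p^e$ for the relevant $e$ forces $\lfloor(p^e - 1)\lambda\rfloor \le b_J(p^e) - 1 < b_J(p^e)$, giving the splitting. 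Everything else is a direct translation through Remark \ref{RemIeSplit} and the definition of $b_J$.
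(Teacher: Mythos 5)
Your argument is correct and takes essentially the same route as the paper: both directions compare $\lfloor (p^e-1)\lambda\rfloor$ with $b_J(p^e)$ via Remark \ref{RemIeSplit} and the definition of $I_e(R)$, and both use Lemma \ref{Lemma Increase be} to guarantee the limit exists. The only cosmetic deviations are that you get boundedness from $\m^{[p^e]}\subseteq I_e(R)$ rather than from $b_J(p^e)/p^e \ls \fpt(J)$, and the homogeneity bookkeeping you flag is not actually needed, since neither the definition of $b_J(p^e)$ nor that of $F$-purity of the pair requires a homogeneous witness.
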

\begin{proof}
By the definition of $b_J(p^e),$ there exists $f\in J^{b_J(p^e)}\setminus I_e(R).$ Then, the map $R\to R^{1/p^e}$, defined by $1\mapsto f^{1/p^e}$ splits by Remark \ref{RemIeSplit}. Thus, $\frac{b_J(p^e)}{p^e}\in \{\lambda\in \RR_{\gs 0} \mid  (R,J^{\lambda})\hbox{ is }F\hbox{-pure}\}$. Hence, for all $e$, we have $\frac{b_J(p^e)}{p^e}\ls \fpt(J)$, and therefore $\left\{\frac{b_J(p^e)}{p^e}\right\}$ is a bounded sequence. By Lemma \ref{Lemma Increase be} we conclude that $\lim\limits_{e\to \infty} \frac{b_J(p^e)}{p^e}$ exists, and that $\lim\limits_{e\to \infty} \frac{b_J(p^e)}{p^e} \ls \fpt(J)$. 

Conversely, let $\sigma\in \{\lambda\in \RR_{\gs 0} \mid (R,J^{\lambda})\hbox{ is }F\hbox{-pure}\}$. For $e\gg 0$, we have that 
$J^{\lfloor (p^e-1)\sigma\rfloor}\not\subseteq I_e(R)$.
Then, $\frac{\lfloor (p^e-1)\sigma\rfloor}{p^e}\ls \frac{b_J(p^e)}{p^e}$ and thus
$$
\sigma=\sup\left\{\frac{\lfloor (p^e-1)\sigma\rfloor}{p^e}\right\}\ls \lim\limits_{e\to \infty} \frac{b_J(p^e)}{p^e}.
$$
Hence,
$$
\fpt(J)\ls \lim\limits_{e\to \infty} \frac{b_J(p^e)}{p^e}.
$$
\end{proof}

\begin{remark}
We note that analogous restatements of Proposition \ref{PropEquivFPT}  for $F$-finite $F$-pure local rings is also true, and the proof is essentially the same.
\end{remark}

\section{F-thresholds and $a$-invariants}
In this section, we prove the first part of our main theorem in positive characteristic. We start with a few preparation lemmas.

\begin{lemma}\label{LemmaSP}
Let $(S,\n,K)$ be a standard graded $F$-finite regular ring. Let $I\subseteq S$ be an ideal such that  $R=S/I$ is an $F$-pure ring. 
Let $\m=\n R$. Then, $\cP(R)=\m$ if and only if $(I^{[p^e]}:I)\subseteq (\n^{[p^e]}:\n)$ for all $e \gs 0$.
\end{lemma}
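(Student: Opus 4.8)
The plan is to combine the description of $\cP(R)$ as the largest compatible homogeneous ideal of $R$ (Proposition~\ref{splitting prime properties}(\ref{basic3})) with Fedder's compatibility criterion (Remark~\ref{compatibleFedder}). For the latter to apply I first observe that a standard graded $F$-finite regular ring is necessarily a polynomial ring: writing $n=\dim_K S_1$, one has $\n/\n^2\cong S_1$ as $K$-vector spaces (the standard grading forces $S_i=S_1^i\subseteq(\n^2)_i$ for $i\gs 2$), so regularity of $S_\n$ gives $\dim S=\dim_K(\n/\n^2)=n$, and a standard graded domain of dimension $n$ minimally generated over $K=S_0$ by $n$ elements must be $K[x_1,\ldots,x_n]$. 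Since $S$ is $F$-finite, $K$ is $F$-finite, so Remarks~\ref{CorrespFedder} and \ref{compatibleFedder} are available for this $S$; moreover $I\subseteq\n$, so the pullback of $\m=\n R$ to $S$ is exactly $\n$.

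Next I would record that $\cP(R)\subseteq\m$ always: $\cP(R)$ is a homogeneous prime ideal by Proposition~\ref{splitting prime properties}(\ref{basic1}),(\ref{basic2}), hence proper, and a proper homogeneous ideal of a standard graded $K$-algebra lies in the irrelevant maximal ideal. Combining this with Proposition~\ref{splitting prime properties}(\ref{basic3}) gives the equivalence: $\cP(R)=\m$ if and only if $\m$ is a compatible ideal. Indeed, if $\m$ is compatible then $\m\subseteq\cP(R)\subseteq\m$, whereas if $\cP(R)=\m$ then $\m=\cP(R)$ is compatible by the same proposition.

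Finally, applying Remark~\ref{compatibleFedder} with $J=\m$, whose pullback $\widetilde J$ equals $\n$, shows that $\m$ is compatible if and only if $(I^{[p^e]}:I)\subseteq(\n^{[p^e]}:\n)$ for every $e\gs 1$. The case $e=0$ is vacuous, since $(I^{[p^0]}:I)=(I:I)=S=(\n:\n)$, so the range $e\gs 1$ may be replaced by $e\gs 0$, which is the asserted statement. I do not anticipate a genuine obstacle here: the only points needing a little care are the reduction to the polynomial ring case (so that Fedder's correspondence applies) and the elementary remark that $\cP(R)$, being prime, is contained in $\m$.
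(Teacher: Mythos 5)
Your argument is correct, and it reaches the conclusion by a more modular route than the paper. The paper's proof works hands-on with the trace map $\Phi_e$ and the ideals $I_e(R)$: from $(I^{[p^e]}:I)\subseteq(\n^{[p^e]}:\n)$ it extracts the explicit shape $f=ux_1^{p^e-1}\cdots x_n^{p^e-1}+g$ of any $f\in(I^{[p^e]}:I)\smallsetminus\n^{[p^e]}$, deduces via Remark \ref{CorrespFedder} that every $\varphi\colon R^{1/p^e}\to R$ satisfies $\varphi(\m^{1/p^e})\subseteq\m$, and concludes $I_e(R)=\m$ for all $e$ (hence $\cP(R)=\m$), with the converse run through the same correspondence; in particular it re-derives, for $J=\m$, exactly the compatibility criterion that Remark \ref{compatibleFedder} records for general $J$. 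You instead black-box that computation: you observe $\cP(R)\subseteq\m$ (it is a proper homogeneous, indeed prime, ideal), identify $\cP(R)=\m$ with compatibility of $\m$ via Proposition \ref{splitting prime properties}(\ref{basic3}) — note $\m$ is proper, as required for the ``largest compatible homogeneous ideal'' statement, $\cP(R)$ being prime — and then quote Remark \ref{compatibleFedder} with $\widetilde J=\n$. What your route buys is brevity and a clean separation of the formal step from the Fedder computation; what the paper's direct argument buys is the slightly finer intermediate statement $I_e(R)=\m$ for every $e$, obtained without invoking the unproved Remark \ref{compatibleFedder}. Your explicit verification that a standard graded $F$-finite regular ring is a polynomial ring is a welcome addition (the paper uses this implicitly, since $\Phi_e$ and Fedder's criterion are set up for $K[x_1,\ldots,x_n]$); only the word ``domain'' there is asserted without justification, but it is also unnecessary: since $K[x_1,\ldots,x_n]$ is a domain of dimension $n$, the dimension count alone forces the kernel of the presentation $K[x_1,\ldots,x_n]\onto S$ to vanish. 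The $e=0$ remark is handled correctly as well.
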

\begin{proof}
Let $\Phi_e:S^{1/p^e}\to S$ denote the $e$-trace map. If $(I^{[p^e]}:I)\subseteq (\n^{[p^e]}:\n)$,  then for every $f\in (I^{[p^e]}:I) \smallsetminus \n^{[p^e]}$ there exist a unit $u\in S$ and an element $g\in \n^{[p^e]}$ such that $f=ux^{p^e-1}_1\cdots x^{p^e-1}_n+g.$ Then, $f\cdot \n\subseteq \n^{[p^e]}$, and we get $\Phi_e(f^{1/p^e} \n^{1/p^e})\subseteq \n$.
Then by Remark \ref{CorrespFedder}, we have that $\varphi(\m^{1/p^e})\subseteq \m$ for every $\varphi:R^{1/p^e}\to R.$
Hence, $I_e(R)=\m$ for all $e \gs 1$ and $\cP(R) = \m$ as well. 

Conversely, if $\m =\cP(R)$, then $I_e(R)=\m$ for all $e \gs 1$.  This means that for every $e \gs 1$ and every $f\in (I^{[p^e]}:I)$, we have $\Phi_e(f^{1/p^e}\n^{1/p^e})\subseteq \n$ by Remark \ref{CorrespFedder}. Thus,  $f\cdot \n\subseteq \n^{[p^e]},$ and hence $f\in (\n^{[p^e]}:\n)$. 
\end{proof}

\begin{lemma}\label{Lemma Lift fpt}
Let $S=K[x_1,\ldots, x_n]$ be a polynomial ring over an $F$-finite field $K$. Let $\n=(x_1,\ldots,x_n)$ denote the maximal homogeneous ideal.
Let $I\subseteq S$ be a homogeneous ideal such that $R:=S/I$ is an $F$-pure ring, and let $\m = \n R$. Then,
$$
\min\left\{s\in\NN\ \bigg| \left[\frac{(I^{[p^e]}:I)+\n^{[p^e]}}{\n^{[p^e]}}\right]_s\neq 0\right\}=n(p^e-1)- b_\m(p^e)
$$
\end{lemma}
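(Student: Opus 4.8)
The plan is to connect the two quantities through Fedder's correspondence and the definition of $b_\m(p^e)$. Recall that by Remark~\ref{RemIeSplit}, a homogeneous element $g\in R$ lies outside $I_e(R)$ precisely when there is some $\varphi\in\Hom_R(R^{1/p^e},R)$ with $\varphi(g^{1/p^e})=1$; via Remark~\ref{CorrespFedder} such $\varphi$ corresponds to a class $f^{1/p^e}$ with $f\in(I^{[p^e]}:I)$, and the equation $\varphi(g^{1/p^e})=1$ translates into $\Phi_e(f^{1/p^e}\widetilde g^{1/p^e})\equiv 1$ modulo $I$, i.e.\ $f\widetilde g$ has a nonzero coefficient on the monomial $x_1^{p^e-1}\cdots x_n^{p^e-1}$ after reducing modulo $I^{[p^e]}$. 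So the basic dictionary is: $g\in R_r\setminus I_e(R)$ iff there exists $f\in(I^{[p^e]}:I)$ with $f\widetilde g\notin \n^{[p^e]}$, equivalently $f\widetilde g$ has a term supported on $x_1^{p^e-1}\cdots x_n^{p^e-1}$.

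First I would reformulate the left-hand side. Writing $c_e := \min\{s\in\NN\mid [\,((I^{[p^e]}:I)+\n^{[p^e]})/\n^{[p^e]}\,]_s\neq 0\}$, this is the least degree of an element of $(I^{[p^e]}:I)$ not lying in $\n^{[p^e]}$. Now given $f\in(I^{[p^e]}:I)$ of degree $c_e$ with $f\notin\n^{[p^e]}$, its image modulo $\n^{[p^e]}$ is a nonzero $K$-linear combination of squarefree-type monomials $x_1^{a_1}\cdots x_n^{a_n}$ with $0\le a_i\le p^e-1$ and $\sum a_i=c_e$. Multiplying $f$ by a suitable monomial $\widetilde g = x_1^{p^e-1-a_1}\cdots x_n^{p^e-1-a_n}$ of degree $n(p^e-1)-c_e$ produces (modulo $\n^{[p^e]}$, hence modulo $I^{[p^e]}$) a term $x_1^{p^e-1}\cdots x_n^{p^e-1}$ with nonzero coefficient — one must check no cancellation occurs, which is automatic since distinct monomials in the support of $f$ mod $\n^{[p^e]}$ multiply to distinct monomials, only one of which can be $x_1^{p^e-1}\cdots x_n^{p^e-1}$ (indeed exactly the chosen one survives). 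Hence the image $g$ of $\widetilde g$ in $R$, which lies in $R_{n(p^e-1)-c_e}$, satisfies $g\notin I_e(R)$, so $\m^{\,n(p^e-1)-c_e}\not\subseteq I_e(R)$, giving $b_\m(p^e)\ge n(p^e-1)-c_e$, i.e.\ $c_e\ge n(p^e-1)-b_\m(p^e)$.

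For the reverse inequality, suppose $r=b_\m(p^e)$, so $\m^r\not\subseteq I_e(R)$: there is a homogeneous $g\in R_r$ (a product of linear forms / monomial in the $x_i$'s modulo $I$, but we may just take a homogeneous degree-$r$ element) with $g\notin I_e(R)$. By the dictionary, there exists $f\in(I^{[p^e]}:I)$ such that $f\widetilde g$ has a nonzero $x_1^{p^e-1}\cdots x_n^{p^e-1}$-coefficient modulo $I^{[p^e]}$; in particular $f\widetilde g\notin\n^{[p^e]}$, so $f\notin\n^{[p^e]}$ and $\deg f \le \deg(f\widetilde g)$. Since $f\widetilde g$ has a term of degree $n(p^e-1)$ in its support mod $\n^{[p^e]}$, and $f\widetilde g\in(I^{[p^e]}:I)$ with nonzero image mod $\n^{[p^e]}$, we may assume (replacing $f\widetilde g$ by its homogeneous component whose image mod $\n^{[p^e]}$ contains $x_1^{p^e-1}\cdots x_n^{p^e-1}$) that $f\widetilde g$ is homogeneous of degree exactly $n(p^e-1)$. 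Then $f\widetilde g$ is an element of $(I^{[p^e]}:I)$ of degree $n(p^e-1)$ not in $\n^{[p^e]}$, whence $c_e\le n(p^e-1)$; more precisely, dividing out $\widetilde g$ we see $f$ itself is a witness of degree $n(p^e-1)-\deg\widetilde g = n(p^e-1)-r$ (here one uses that $\widetilde g$ can be taken to be a monomial of degree $r$, since $g$ is a degree-$r$ element of $R$ and we can lift any monomial appearing in it), so $c_e\le n(p^e-1)-b_\m(p^e)$. Combining the two inequalities yields the claimed equality.

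The main obstacle I anticipate is being careful about the passage between homogeneous elements of $R$ and their lifts $\widetilde g$ in $S$, and ensuring the degree bookkeeping is tight: one needs that the degree-$r$ witness $g\notin I_e(R)$ can be taken with a lift $\widetilde g$ of degree exactly $r$ whose product with $f$ realizes the top monomial without cancellation, and conversely that the minimal-degree $f\in(I^{[p^e]}:I)\setminus\n^{[p^e]}$ genuinely multiplies up to the socle monomial by a single monomial. Both points rely on the fact that an element of $S$ lies outside $\n^{[p^e]}$ iff its reduction modulo $\n^{[p^e]}$ is a nonzero combination of monomials with all exponents $\le p^e-1$, together with the observation that multiplication by a fixed monomial is injective on such reductions and sends at most one monomial to $x_1^{p^e-1}\cdots x_n^{p^e-1}$. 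I would state this monomial lemma explicitly before running the two inequalities, as it is the technical heart of the argument; everything else is the Fedder/$I_e$ dictionary already established in Remarks~\ref{CorrespFedder} and~\ref{RemIeSplit}.
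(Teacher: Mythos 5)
Your argument is correct and takes essentially the same route as the paper: both inequalities are run through Fedder's correspondence, the lower bound by complementing a monomial of the minimal-degree witness $f$ outside $\n^{[p^e]}$ to the socle monomial, and the upper bound by extracting a homogeneous component, exactly as in the paper's proof. The only imprecision is the phrase ``dividing out $\widetilde g$'': since $f$ need not be homogeneous, one should (as the paper does) pass to the homogeneous component of $f$ of degree $n(p^e-1)-b_\m(p^e)$, which lies in $(I^{[p^e]}:I)$ because that ideal is homogeneous and lies outside $\n^{[p^e]}$ because it is the only graded piece of $f$ that can contribute the monomial $x_1^{p^e-1}\cdots x_n^{p^e-1}$ to $f\widetilde g$.
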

\begin{proof}
Let $\Phi_e:S^{1/p^e}\to S$ denote the $e$-trace map. Let $u:=\min\left\{s \in \NN \ \bigg| \left[\frac{(I^{[p^e]}:I)+\n^{[p^e]}}{\n^{[p^e]}}\right]_s\neq 0\right\}$ and  
$b= b_\m(p^e)$.
By our definition of $u$, there exists $f\in (I^{[p^e]}:I)\smallsetminus \n^{[p^e]}$, which is a homogeneous polynomial of degree $u$. Since $f\not\in \n^{[p^e]},$ there exists $x^\alpha\in\Supp \{f\}$ such that $x^\alpha\not \in \n^{[p^e]}$.
We pick $\beta=\boldsymbol{{\rm p^e-1}}-\alpha$, so $x^{\alpha}x^{\beta}=x^{\boldsymbol{{\rm p^e-1}}}$, where $\boldsymbol{{\rm p^e-1}}$ denotes the multi-index $(p^e-1,p^e-1,\ldots,p^e-1)$. We have that the map $\varphi:R^{1/p^e}\to R$ defined by 
$\varphi(\overline{r}^{1/p^e})=\overline{\Phi_e(f^{1/p^e}r^{1/p^e})}$ 
is a splitting of the Frobenius map on $R$ such that $\varphi((x^{\beta})^{1/p^e})=1$. Hence, $x^{\beta}\notin I_e(R)$. Since $|\beta|=(p^e-1)n-u,$ we have that $(p^e-1)n-u\ls b$; that is, $u \gs n(p^e-1)- b$.

For the other inequality, we pick a monomial $g\in \n$ of degree $b$ such that $\overline{g}\in\m^{b} \smallsetminus I_e(R)$.
Then, there exists a map $\varphi:R^{1/p^e}\to R$ such that $\varphi(\overline{g}^{1/p^e})=1.$ Therefore, there exists an element $f\in (I^{[q]}:I)\smallsetminus \n^{[p^e]}$ such that $\varphi(\overline{r}^{1/p^e})=\overline{\Phi_e(f^{1/p^e}r^{1/p^e} )}$ for all $\ov{r}^{1/p^e} \in R^{1/p^e}$. By definition of $\Phi_e,$ we have that $x^{\boldsymbol{{\rm p^e-1}}}\in\Supp(fg)$. Let $h$ be the homogeneous part of degree $(p-1)n-b$ of $f$. We note that $h\in  (I^{[p^e]}:I)$ because $I$ is homogeneous. In addition, $h\notin \n^{[p^e]}$ because $x^{\boldsymbol{{\rm p^e-1}}}\in\Supp(hg)$. Then we get $u\ls (p^e-1)n-b$, as desired.
\end{proof}

We re now ready to prove the first part of Theorem \ref{MainTheorem}.

\begin{theorem}\label{Thm a-inv}
Let $(R,\m,K)$ be a standard graded $K$-algebra which is $F$-finite and $F$-pure. Then $\fpt(R)\ls -a_i(R)$ for every $i\in\NN$.
\end{theorem}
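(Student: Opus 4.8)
The plan is to use the characterization $\fpt(R)=\lim_{e\to\infty} b_\m(p^e)/p^e$ of Proposition \ref{PropEquivFPT} and to convert the splitting encoded by $b_\m(p^e)$ into a statement about the highest nonzero degree of $H^i_\m(R)$. Fix $i\in\NN$. If $H^i_\m(R)=0$ then $a_i(R)=-\infty$ and the inequality is vacuous, so assume $H^i_\m(R)\neq 0$; then $a_i(R)\in\ZZ$ and $H^i_\m(R)_{a_i(R)}\neq 0$.

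For each $e\gs 1$ I would pick a homogeneous element $g\in\m^{b_\m(p^e)}\setminus I_e(R)$ (possible since both ideals are homogeneous). Since $R$ is standard graded, $\m^{r}\subseteq\bigoplus_{j\gs r}R_j$, so $D_e:=\deg g\gs b_\m(p^e)$. By Remark \ref{RemIeSplit} there is $\varphi\in\Hom_R(R^{1/p^e},R)$ with $\varphi(g^{1/p^e})=1$; since $R$ is $F$-finite, $R^{1/p^e}$ is a finitely generated graded $R$-module with its $\tfrac{1}{p^e}\NN$-grading, so $\Hom_R(R^{1/p^e},R)$ is graded, and I may replace $\varphi$ by its homogeneous component of degree $-D_e/p^e$, which still sends $g^{1/p^e}$ to $1$. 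Now consider the $R$-linear map $\mu\colon R\to R^{1/p^e}$, $\mu(r)=r\,g^{1/p^e}$: it is homogeneous of degree $D_e/p^e$ and satisfies $\varphi\circ\mu=\operatorname{id}_R$, so $\mu$ is a split injection.

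The heart of the argument is to apply the functor $H^i_\m(-)$. It preserves splittings and sends a homogeneous map of degree $c$ to a homogeneous map of degree $c$, so $H^i_\m(\mu)\colon H^i_\m(R)\to H^i_\m(R^{1/p^e})$ is injective and homogeneous of degree $D_e/p^e$. Using the graded isomorphism $H^i_\m(R^{1/p^e})\cong H^i_\m(R)^{1/p^e}$ (exactness of $(-)^{1/p^e}$) together with $[H^i_\m(R)^{1/p^e}]_\beta=[H^i_\m(R)_{p^e\beta}]^{1/p^e}$, a nonzero element of $H^i_\m(R)_{a_i(R)}$ is carried to a nonzero element of $[H^i_\m(R)^{1/p^e}]_{a_i(R)+D_e/p^e}=[H^i_\m(R)_{p^e a_i(R)+D_e}]^{1/p^e}$. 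Hence $H^i_\m(R)_{p^e a_i(R)+D_e}\neq 0$, and maximality in the definition of $a_i(R)$ forces $p^e a_i(R)+D_e\ls a_i(R)$, i.e. $D_e\ls -(p^e-1)\,a_i(R)$. Since $b_\m(p^e)\ls D_e$, we obtain $b_\m(p^e)\ls -(p^e-1)\,a_i(R)$ for all $e$; dividing by $p^e$ and letting $e\to\infty$ gives $\fpt(R)\ls -a_i(R)$.

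I do not anticipate a serious obstacle: the substance is simply that ``$g\notin I_e(R)$'' packages a split $R$-linear inclusion $R\hookrightarrow R^{1/p^e}$ of degree $\deg(g)/p^e$, and that $H^i_\m$ carries splittings to splittings, which pushes the top nonzero degree of $H^i_\m(R)$ up to at least $p^e a_i(R)+\deg(g)$ and thereby bounds $b_\m(p^e)$. The only place that needs care is the graded bookkeeping in the third paragraph --- that the isomorphism $H^i_\m(R^{1/p^e})\cong H^i_\m(R)^{1/p^e}$ is compatible with the fractional gradings, so that a shift by $D_e/p^e$ before clearing denominators yields precisely the degree $p^e a_i(R)+D_e$ --- together with the routine check that $\varphi$ may be taken homogeneous. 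One could alternatively run everything through the polynomial ring, using Lemma \ref{Lemma Lift fpt} to write $b_\m(p^e)=n(p^e-1)-\operatorname{indeg}\left(\tfrac{(I^{[p^e]}:I)+\n^{[p^e]}}{\n^{[p^e]}}\right)$ and then bounding that initial degree from below via graded local duality over $S$ and the identity $\Ext^{n-i}_S(S/I^{[p^e]},S)\cong F^e(\Ext^{n-i}_S(R,S))$, but the direct route above is shorter.
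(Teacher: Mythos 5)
Your proof is correct and follows essentially the same route as the paper: both use Proposition \ref{PropEquivFPT}, turn a homogeneous $g\in\m^{b_\m(p^e)}\setminus I_e(R)$ into a split, degree-shifted inclusion $R\hookrightarrow R^{1/p^e}$, apply $H^i_\m(-)$, and compare top degrees via $a_i(R^{1/p^e})=a_i(R)/p^e$ to get $b_\m(p^e)\ls -(p^e-1)a_i(R)$ before passing to the limit. Your bookkeeping with $D_e=\deg g\gs b_\m(p^e)$ (rather than the paper's shift $R(-\gamma)$ with $\gamma=b_\m(p^e)/p^e$) is a harmless, slightly more careful variant of the same argument.
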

\begin{proof} 
If $H^i_\m(R) = 0$ there is nothing to prove, since $a_i(R) = - \infty$. Let $i \in \NN$ be such that $H^i_\m(R) \ne 0$. Let $f \in\m^{b_\m(p^e)}\setminus I_e(R)$ be a homogeneous element, and let $\gamma=b_\m(p^e)/p^e$.
By Remark \ref{frac_grading} we can view $R$ as a $\frac{1}{p^e}\NN$-graded module. Then 
\[
\xymatrixcolsep{5mm}
\xymatrixrowsep{2mm}
\xymatrix{
R(-\gamma) \ \ar@{^{(}->}[rr]^-{f^{1/p^e}\cdot} && \ R^{1/p^e}
}
\]
splits, and the inclusion is homogeneous of degree zero. Applying the $i$-th local cohomology, we get a homogeneous split inclusion 
$H^i_\m(R(-\gamma)) \hookrightarrow H^i_\m(R^{1/p^e})$
of degree zero. Let $v \in H^i_\m(R(-\gamma))_{a_i(R)}$ be an element in the top graded part of $H^i_\m(R(-\gamma))$, which has degree $a_i(R)+\gamma$. Under the inclusion above, this maps to a nonzero element of degree $a_i(R) + \gamma$ in $H^i_\m(R^{1/p^e})$. Therefore,
\[
\ds a_i(R) + \frac{b_\m(p^e)}{p^e} \ls a_i(R^{1/p^e}) = \frac{a_i(R)}{p^e},
\]
which is equivalent to
\[
\ds \frac{b_\m(p^e)}{p^e}\ls \frac{(1-p^e)a_i(R)}{p^e}.
\]

Since this holds for all   $e \gg 1$, we get
\[
\ds \fpt(R) = \lim_{e \to \infty} \frac{b_\m(p^e)}{p^e} 
\ls\lim_{e \to \infty}  -\frac{(p^e-1)a_i(R)}{p^e} =  -a_i(R)
\]
by Proposition \ref{PropEquivFPT}.
\end{proof}

\begin{corollary}\label{CorExtSW}
Let $(R,\m,K)$ be a standard graded $K$-algebra which is $F$-finite and $F$-pure. If $a_i(R)=0$ for some $i$, then $\sdim(R)=0$.
\end{corollary}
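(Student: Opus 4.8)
The plan is to combine Theorem \ref{Thm a-inv} with basic properties of the $F$-pure threshold and the splitting dimension. First, suppose $a_i(R)=0$ for some $i\in\NN$. Applying Theorem \ref{Thm a-inv} with this $i$, we get $\fpt(R)\ls -a_i(R)=0$. Since $\fpt(R)\gs 0$ always holds (the empty product gives $(R,R)$ being $F$-pure, so $0$ lies in the set over which we take the supremum), we conclude $\fpt(R)=0$. So the content reduces to: if $\fpt(R)=0$, then $\sdim(R)=0$, i.e.\ $\cP(R)=\m$.

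Next I would unwind what $\fpt(R)=0$ means via Proposition \ref{PropEquivFPT}: $\lim_{e\to\infty} b_\m(p^e)/p^e = 0$. Now I claim this forces $b_\m(p^e)=0$ for all $e$. Indeed, by Lemma \ref{Lemma Increase be} the sequence $b_\m(p^e)$ satisfies $p\cdot b_\m(p^e)\ls b_\m(p^{e+1})$, so if $b_\m(p^{e_0})\gs 1$ for some $e_0$, then $b_\m(p^{e_0+j})\gs p^j$ for all $j\gs 0$, whence $b_\m(p^{e_0+j})/p^{e_0+j}\gs p^{-e_0}$ does not tend to zero — contradiction. Therefore $b_\m(p^e)=0$ for every $e$, which by the definition of $b_\m$ means $\m = \m^1 \subseteq I_e(R)$ for all $e$; since $I_e(R)\subseteq\m$ trivially (each $\varphi(r^{1/p^e})$ lands in $R$, and for $r\in\m$ homogeneous of positive degree, $\varphi(r^{1/p^e})$ has positive degree hence lies in $\m$ — or simply note $I_e(R)$ is a proper homogeneous ideal), we get $I_e(R)=\m$ for all $e$. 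Hence $\cP(R)=\bigcap_e I_e(R)=\m$, so $\sdim(R)=\dim(R/\cP(R))=\dim(K)=0$.

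Alternatively, one can bypass Lemma \ref{Lemma Increase be} in the last step by invoking Lemma \ref{Lemma Lift fpt}: if $R=S/I$ with $S$ a polynomial ring and $\n$ its maximal ideal, then $b_\m(p^e)=0$ for all $e$ says $n(p^e-1) = \min\{s \mid [((I^{[p^e]}:I)+\n^{[p^e]})/\n^{[p^e]}]_s\ne 0\}$, i.e.\ $(I^{[p^e]}:I)\subseteq\n^{[p^e]} + (\text{forms of degree } n(p^e-1))$; combined with the fact that $(I^{[p^e]}:I)\not\subseteq\n^{[p^e]}$ for an $F$-pure ring, this gives $(I^{[p^e]}:I)\subseteq(\n^{[p^e]}:\n)$ for all $e$, so $\cP(R)=\m$ by Lemma \ref{LemmaSP}. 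Either route works; I would present the first, as it is self-contained within the graded formalism and does not require passing to a polynomial presentation.

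The only mild subtlety — and the step I would be most careful about — is establishing $\fpt(R)\gs 0$ and the rigidity argument that $\lim b_\m(p^e)/p^e=0$ forces $b_\m(p^e)=0$ for all $e$; both are short but need the multiplicativity estimate of Lemma \ref{Lemma Increase be} to rule out a slowly-growing but nonzero sequence. Nothing here is genuinely hard; the corollary is essentially an immediate consequence of Theorem \ref{Thm a-inv} once one observes $\fpt(R)=0 \iff \cP(R)=\m$.
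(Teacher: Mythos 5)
Your argument is correct and is essentially the same as the paper's proof: Theorem \ref{Thm a-inv} gives $\fpt(R)=0$, then Proposition \ref{PropEquivFPT} together with the growth estimate of Lemma \ref{Lemma Increase be} forces $b_\m(p^e)=0$ for all $e$, hence $I_e(R)=\m$ for all $e$ and $\cP(R)=\m$, so $\sdim(R)=0$. The rigidity step you spell out (a nonzero $b_\m(p^{e_0})$ would make $b_\m(p^e)/p^e$ bounded below by $p^{-e_0}$) is exactly what the paper's citation of Lemma \ref{Lemma Increase be} encodes, so no further comment is needed.
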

\begin{proof}
If $a_i(R)=0$ for some $i$, we have that $\fpt(R)=0$ by Theorem \ref{Thm a-inv}. Then, we have that 
$b_e=0$ for every $e\in \NN$ by Lemma \ref{Lemma Increase be} and Proposition \ref{PropEquivFPT}. As a consequence, $\m\subseteq I_e$ for every $e\in \NN.$ Since $I_e(R)\subseteq \m$ holds true because $R$ is $F$-pure, we have that $\m=I_e(R)$ for every $e\in \NN.$ Hence, $\cP(R)=\m,$ and $\sdim(R)=0.$ 
\end{proof}

We now review the definition of test ideal, which is closely related to the theory of tight closure. We refer the reader to \cite{HoHu2} for definitions and details.
\begin{definition}
Let $R$ be a Noetherian ring of positive characteristic $p$. The {\it finitistic test ideal of $R$} is defined as $\tau^{fg}(R) := \cap_M \ann(0^*_M)$, where $M$ runs through all the finitely generated $R$-modules. We define the {\it big test ideal of $R$} to be $\tau(R) = \cap_M \ann(0^*_M)$, where $M$ runs through all $R$-modules.
\end{definition}

\begin{remark}\label{Rem sp tau}
We point out that for $F$-finite rings, $\tau(R)$ is the smallest compatible ideal not contained in a minimal prime of $R$ \cite[Theorem 6.3]{KarlCentersFpurity}. In addition, $\tau^{fg}(R)$ is a compatible ideal  \cite[Theorem 3.1]{Vass}.
One clearly has the inclusion $\tau(R)\subseteq \tau^{fg}(R)$. 
It is one of the most important open problems in tight closure theory whether these two ideals are the same. 
Equality is known to hold true in some cases. For instance, Lyubeznik and Smith proved that $\tau^{fg}(R) = \tau(R)$ for finitely generated standard graded $K$-algebras \cite[Corollary 3.4]{FregEquiv}.
\end{remark}

We use Proposition \ref{PropEquivFPT} to relate the $F$-pure threshold of the ring with its splitting dimension.

\begin{theorem} \label{bounds_fpt_sdim}
Let $(R,\m,K)$ be a standard graded $K$-algebra which is $F$-finite and $F$-pure, and let $J\subseteq R$ be a compatible ideal. Then, we have 
$$
\fpt(R)\ls \fpt(R/J).
$$
In particular,
$$
\fpt(R)\ls\fpt(R/\tau)\hbox{  and }\fpt(R)\ls \fpt(R/\cP)\ls \sdim(R), 
$$
where $\tau$ denotes the test ideal of $R$, and $\cP$ the splitting prime of $R$.
\end{theorem}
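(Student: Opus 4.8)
The plan is to prove the three assertions in turn, building each on the previous one, and reducing everything to the characterization of $\fpt$ from Proposition \ref{PropEquivFPT} together with the numerics $b_J(p^e)$.

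\textbf{Step 1: $\fpt(R) \ls \fpt(R/J)$ for $J$ compatible.} The key observation is that if $J$ is compatible, then the natural surjection $\pi : R \to R/J$ interacts well with the numbers $b_\m(p^e)$ and $b_{\m/J}(p^e)$. Suppose $f \in \m^{b_\m(p^e)} \smallsetminus I_e(R)$ is homogeneous; by Remark \ref{RemIeSplit} there is $\varphi \in \Hom_R(R^{1/p^e},R)$ with $\varphi(f^{1/p^e}) = 1$. Since $J$ is compatible, $\varphi(J^{1/p^e}) \subseteq J$, so $\varphi$ descends to a map $\bar\varphi \in \Hom_{R/J}((R/J)^{1/p^e}, R/J)$ with $\bar\varphi(\bar f^{1/p^e}) = \bar 1$, where $\bar f$ is the image of $f$ in $R/J$. (Here one should note, using Remark \ref{CorrespFedder} applied to a presentation $R = S/I$, that every map $(R/J)^{1/p^e} \to R/J$ arises this way, so ``$J$ compatible'' is exactly what guarantees the descent of every relevant splitting.) Thus $\bar f \in (\m/J)^{b_\m(p^e)} \smallsetminus I_e(R/J)$, which gives $b_\m(p^e) \ls b_{\m/J}(p^e)$ for all $e$. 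Dividing by $p^e$ and letting $e \to \infty$, Proposition \ref{PropEquivFPT} yields $\fpt(R) \ls \fpt(R/J)$.

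\textbf{Step 2: the two displayed special cases.} By Remark \ref{Rem sp tau}, $\tau = \tau(R)$ is a compatible ideal of $R$, and by Proposition \ref{splitting prime properties}(\ref{basic3}), $\cP = \cP(R)$ is compatible (indeed the \emph{largest} compatible homogeneous ideal). Applying Step 1 to $J = \tau$ and to $J = \cP$ gives $\fpt(R) \ls \fpt(R/\tau)$ and $\fpt(R) \ls \fpt(R/\cP)$. For the final inequality $\fpt(R/\cP) \ls \sdim(R)$: by Proposition \ref{splitting prime properties}(\ref{basic4}), $R/\cP$ is strongly $F$-regular, hence in particular $F$-pure, and $\dim(R/\cP) = \sdim(R)$ by definition of the splitting dimension. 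So it suffices to know that for any $F$-finite $F$-pure standard graded algebra $A$ one has $\fpt(A) \ls \dim(A)$; this follows from $b_\m(p^e) \ls \dim(A)(p^e - 1)$ (since $\m^{\dim(A)(p^e-1)+1}$, being generated in degree $> \dim(A)(p^e-1)$, lands in $I_e(A)$ once one checks the trace-map degree bound — more directly, $\m^{[p^e]} \subseteq \m^{p^e}$ forces $\nu$-type bounds, but the cleanest route is to invoke $\fpt(A) \ls \ell(A) = \dim(A)$ where $\ell$ is the number of variables in a minimal presentation, via Fedder), and then dividing by $p^e$ and taking the limit.

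The step I expect to require the most care is the descent of splittings in Step 1: one must be careful that compatibility of $J$ with \emph{all} maps $R^{1/p^e} \to R$ does give compatibility with the particular map realizing $f \notin I_e(R)$, and that the quotient map $\bar\varphi$ is genuinely surjective onto $R/J$ rather than merely landing in some submodule; Remark \ref{compatibleFedder} and the Fedder correspondence of Remark \ref{CorrespFedder} are the right tools, since they let one compute $I_e$ of both $R$ and $R/J$ inside a common polynomial ring. The bound $\fpt(A) \ls \dim(A)$ at the end of Step 2 is standard but should be stated with a precise justification rather than left to the reader, since it is the only place genuinely new input (beyond Step 1) is needed.
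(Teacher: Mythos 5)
Your Step 1 is correct and is essentially the paper's own argument in different clothing: the paper compares $b_\m(p^e)$ with $b_{\m (R/J)}(p^e)$ through Fedder's correspondence (Remark \ref{compatibleFedder} combined with Lemma \ref{Lemma Lift fpt}), whereas you descend a single splitting $\varphi$ with $\varphi(f^{1/p^e})=1$ directly along $R\to R/J$ using the definition of a compatible ideal; both routes give $b_\m(p^e)\ls b_{\m(R/J)}(p^e)$ and conclude by Proposition \ref{PropEquivFPT}. (Your parenthetical worry is unnecessary: you only need that the \emph{descended} map is an honest element of $\Hom_{R/J}((R/J)^{1/p^e},R/J)$, not that every such map arises by descent. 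You should, however, record that the same descent applied to $f=1$ shows $R/J$ is $F$-pure, so that $I_e(R/J)$ and $\fpt(R/J)$ are defined.) The deductions $\fpt(R)\ls\fpt(R/\tau)$ and $\fpt(R)\ls\fpt(R/\cP)$ from compatibility of $\tau$ and $\cP$ are also fine.

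The genuine gap is the final inequality $\fpt(R/\cP)\ls\sdim(R)$, i.e.\ your claim that $\fpt(A)\ls\dim(A)$ for an $F$-finite $F$-pure standard graded $A$: none of your three justifications proves it. The ``cleanest route'' rests on the identification $\ell(A)=\dim(A)$, but the number of variables in a minimal presentation is the embedding dimension $\dim_K A_1$, which in general strictly exceeds $\dim(A)$ (e.g.\ $A=K[x,y,z]/(xz-y^2)$, relevant here because $R/\cP$ need not be a hypersurface or have small embedding dimension); Fedder only yields $\fpt(A)\ls\dim_K A_1$. The remark about $\m^{[p^e]}\subseteq\m^{p^e}$ only produces a \emph{lower} bound for $\nu_\m(p^e)$ (equivalently, it compares $\fpt$ with $c^\m$, which is at least $\dim(A)$), so it points the wrong way. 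And your first route needs precisely the assertion that every homogeneous element of $(I^{[p^e]}:I)\smallsetminus\n^{[p^e]}$ has degree at least $\Ht(I)(p^e-1)$, equivalently $b_\m(p^e)\ls\dim(A)(p^e-1)$ via Lemma \ref{Lemma Lift fpt}; the degree of the trace map by itself only excludes degrees above $n(p^e-1)$ and again gives the embedding-dimension bound, so the nontrivial lower bound is left unproved. The inequality $\fpt(A)\ls\dim(A)$ is true, but it needs either a citation, as in the paper, which invokes \cite[Proposition 2.6(1)]{TW2004} for $\fpt(R/\cP)\ls\dim(R/\cP)=\sdim(R)$, or an actual argument; for instance, since $R/\cP$ is strongly $F$-regular it is Cohen--Macaulay, so $-a_d(R/\cP)\ls\dim(R/\cP)$ (the numerator of its Hilbert series has nonnegative degree), and then Theorem \ref{Thm a-inv} applied to $R/\cP$ gives $\fpt(R/\cP)\ls -a_d(R/\cP)\ls\sdim(R)$.
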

\begin{proof}
Let $S=K[x_1,\ldots,x_n]$ be a polynomial ring such that there exists a surjection $S\to R$, and  let $\n=(x_1,\ldots,x_n)$, so that $\m = \n R$. Let $I$ denote the kernel of the surjection. Let $\widetilde{J}\subseteq S$ be the pullback of  $J$. 
We have that $(I^{[p^e]}:I)\subseteq (\widetilde{J}^{[p^e]}:\widetilde{J})$ for every $e\in\NN$ by Remark \ref{compatibleFedder}. Then,
$$
\min\left\{t\in\NN\ \bigg|\left[\frac{(\widetilde{J}^{[p^e]}:\widetilde{J})+\n^{[p^e]}}{\n^{[p^e]}}\right]_t\neq 0\right\}
\ls
\min\left\{t\in\NN\ \bigg| \left[\frac{(I^{[p^e]}:I)+\n^{[p^e]}}{\n^{[p^e]}}\right]_t\neq 0\right\}.
$$
As a consequence, we get
$$ 
b_\m(p^e)=\max\{ t\in\NN\mid \m^t\not\subseteq I_e(R)\} \ls \max\{ t\in\NN\mid \m^t\not\subseteq I_e(R/J)\}=b_{\m (R/J)}(p^e)
$$
by Proposition \ref{PropEquivFPT}. Then,
$\fpt(R)\ls\fpt(R/J)$.
The last claim follows from the fact that the test ideal is compatible as noted in Remark \ref{Rem sp tau}, and the splitting prime is compatible by Proposition \ref{splitting prime properties} (\ref{basic3}).  Finally,
 $\fpt(R/\cP) \ls \dim(R/\cP)$ \cite[Proposition 2.6(1)]{TW2004}. 
\end{proof}

We now focus on the diagonal $F$-threshold.
\begin{remark}\label{Lemma c(R)}
For any standard graded $K$-algebra $(R,\m,K)$, we have that
$$
\max\left\{s\in\frac{1}{p^e}\cdot\ZZ \ \bigg| \left[R^{1/p^e}/\m R^{1/p^e}\right]_s\neq 0\right\}=\frac{\nu_e}{p^e}.
$$
\end{remark}

We now are ready to proof the second part of Theorem \ref{MainTheorem}.
\begin{theorem}\label{Main c(R)}
Let $R$ be an $F$-finite standard graded $K$-algebra, and let $d=\dim(R)$. 
Then, $-a_d(R)\ls c^\m_{-}(R)$.
Furthermore, if $R$ is $F$-pure, then  $-a_i(R)\ls c^\m(R)$ for every $i$ such that $H^i_\m(R)\neq 0$.
\end{theorem}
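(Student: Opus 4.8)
The plan is to compare the graded pieces of $R^{1/p^e}$ modulo $\mathfrak{m}R^{1/p^e}$ with the top local cohomology of $R$ via graded local duality. First I would establish the inequality $-a_d(R) \ls c^\m_-(R)$ for arbitrary $F$-finite $R$. By graded local duality over a graded polynomial ring $S = K[x_1,\ldots,x_n]$ surjecting onto $R$, the module $H^d_\m(R)$ is the Matlis dual of $\Ext^{n-d}_S(R,S(-n))$, so $a_d(R)$ is controlled by the smallest degree of a minimal generator (equivalently, a nonzero socle element) of that Ext module. The key observation is that $R^{1/p^e} \otimes_R K = R^{1/p^e}/\mathfrak{m}R^{1/p^e}$ has its top degree equal to $\nu_e/p^e$ by Remark \ref{Lemma c(R)}, where $\nu_e := \nu_\m(p^e)$, and that the Frobenius pushforward $F^e_* R$ carries enough of the structure of $H^d_\m(R)$: applying $H^d_\m(-)$ to a suitable map and tracking degrees. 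Concretely, I expect to use that $H^d_\m(R^{1/p^e}) \cong H^d_\m(R)^{1/p^e}$ has top degree $a_d(R)/p^e$, and build a nonzero homogeneous $R$-linear map from a twist of $R$ into $R^{1/p^e}$ (or the reverse) whose existence forces a degree inequality relating $a_d(R)/p^e$ and $\nu_e/p^e$.

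More precisely, the clean route is: since $R^{1/p^e}$ is a faithful $R$-module (as $R$ is reduced $F$-finite, or quite generally because $1 \in R^{1/p^e}$), there is a nonzero homomorphism $R \to R^{1/p^e}$, and in fact a homogeneous one of degree $0$ sending $1 \mapsto 1$. Dualizing into $E = E_R(K)$ and using that $\Hom_R(R^{1/p^e}, E)$ has a description in terms of $E^{1/p^e}$ twisted appropriately, one sees the socle degrees of $H^d_\m(R^{1/p^e})$ bound those of the quotient $R^{1/p^e}/\mathfrak{m}R^{1/p^e}$ — i.e.\ $\nu_e/p^e \ls -a_d(R^{1/p^e}) = -a_d(R)/p^e$ is the wrong direction, so instead one wants the socle of $H^d_\m(R)$ to detect a bottom degree of a minimal generator of the dual, and the surjection $R^{1/p^e} \onto R^{1/p^e}/\mathfrak{m}R^{1/p^e}$ gives on $d$-th local cohomology information linking $a_d(R)/p^e$ with the top degree $\nu_e/p^e$ of the finite-length quotient. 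Taking $\liminf$ over $e$ and dividing by $p^e$ then yields $-a_d(R) \ls c^\m_-(R)$.

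For the second statement, assume $R$ is $F$-pure and fix $i$ with $H^i_\m(R) \neq 0$. Here I would run essentially the same argument as in the proof of Theorem \ref{Thm a-inv}, but using the splitting of $R^{1/p^e} \onto R$ (which exists since $R$ is $F$-split) rather than a splitting of $R \hookrightarrow R^{1/p^e}$, together with the fact that $R^{1/p^e}/\mathfrak{m}R^{1/p^e}$ surjects onto $R/\mathfrak{m} = K$. Applying $H^i_\m(-)$ and chasing a top-degree element of $H^i_\m(R)$ through the induced maps, the relevant degree bound becomes $\nu_e/p^e \gs -a_i(R) \cdot \tfrac{p^e-1}{p^e}$ (or similar), and since $R$ is $F$-pure the limit $c^\m(R) = \lim \nu_e/p^e$ exists by \cite[Lemma 2.3]{HMTW}; passing to the limit gives $-a_i(R) \ls c^\m(R)$ for every such $i$. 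The main obstacle I anticipate is setting up the correct graded duality bookkeeping in the first part — pinning down which module's minimal-generator degree equals $-a_d(R)$ and producing the homogeneous map in the right direction so that the inequality comes out with the correct sign; once the degree-zero homomorphism between $R$ and $R^{1/p^e}$ is in place, the rest is a routine computation with the $\tfrac{1}{p^e}\ZZ$-grading.
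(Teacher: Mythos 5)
There is a genuine gap in both halves of your argument: you never produce the mechanism that actually ties $a_i(R)$ to $\nu_\m(p^e)$. In the first part you try to use a single degree-zero map $R \to R^{1/p^e}$, local duality, and the surjection $R^{1/p^e} \onto R^{1/p^e}/\m R^{1/p^e}$; as you yourself notice, the first attempt gives an inequality in the wrong direction, and the fallback does not help because the fiber $R^{1/p^e}/\m R^{1/p^e}$ has finite length, so applying $H^d_\m(-)$ to that surjection lands in the zero module (for $d\gs 1$) and ``links'' nothing. The missing idea is to use Remark \ref{Lemma c(R)} through the minimal generators of $R^{1/p^e}$: by graded Nakayama, a minimal system of homogeneous generators $v_1,\ldots,v_r$ of $R^{1/p^e}$ has degrees $\gamma_1,\ldots,\gamma_r$ with $\max_j \gamma_j = \nu_\m(p^e)/p^e$, giving a degree-zero surjection $\bigoplus_j R(-\gamma_j)\onto R^{1/p^e}$. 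Since $H^d_\m(-)$ is right exact on finitely generated modules, this induces a surjection $\bigoplus_j H^d_\m(R)(-\gamma_j)\onto H^d_\m(R^{1/p^e})\cong H^d_\m(R)^{1/p^e}$, whence $a_d(R)/p^e \ls a_d(R)+\nu_\m(p^e)/p^e$, i.e. $(1-p^e)a_d(R)\ls \nu_\m(p^e)$; dividing by $p^e$ and taking $\liminf$ gives $-a_d(R)\ls c^\m_-(R)$. No duality is needed, and no $F$-purity is needed for $i=d$.

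For the second part your proposed tool, the splitting $R^{1/p^e}\onto R$, points the wrong way: it yields a (split) surjection $H^i_\m(R^{1/p^e})\onto H^i_\m(R)$, which only gives $a_i(R)\ls a_i(R)/p^e$, i.e. the already known fact $a_i(R)\ls 0$ of Remark \ref{FpureNeg}; the quantity $\nu_\m(p^e)$ never enters. What is actually needed is surjectivity of the induced map $\bigoplus_j H^i_\m(R)(-\gamma_j)\to H^i_\m(R^{1/p^e})$ for $i<d$, which is \emph{not} automatic (local cohomology is not right exact below the top index). This is exactly where $F$-purity is used in the paper, via the nontrivial input of \cite[Lemma 2.5]{AnuragUliPure}: for $F$-pure $R$ the natural map $H^i_\m(R)\otimes_R R^{1/p^e}\to H^i_\m(R^{1/p^e})$ is surjective, and composing it with $H^i_\m(R)\otimes_R\bigl(\bigoplus_j R(-\gamma_j)\bigr)\onto H^i_\m(R)\otimes_R R^{1/p^e}$ gives the required surjection, after which the same degree count as above yields $(1-p^e)a_i(R)\ls\nu_\m(p^e)$ and hence $-a_i(R)\ls c^\m(R)$ (the limit existing for $F$-pure rings). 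Your write-up anticipates the correct final numerical inequality, but without this surjectivity ingredient the bound on $a_i(R^{1/p^e})$ in terms of $a_i(R)+\nu_\m(p^e)/p^e$ is unsupported.
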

\begin{proof}
We fix $i\in\NN$ such that $H^i_\m(R)\neq 0$. 
Let $v_1,\ldots,v_r$ be a minimal system of homogeneous generators of $R^{1/p^e}$ as an $R$-module, with degrees $\gamma_1,\ldots,\gamma_r\in\frac{1}{p^e}\NN$. By Remark \ref{frac_grading} we can view $R$ as a $\frac{1}{p^e} \NN$-graded module. We have a degree zero surjective map
$$
\bigoplus^r_{j=0} R(-\gamma_j)\stackrel{\phi}{\longrightarrow} R^{1/p^e},
$$
where $R(-\gamma_j)\to R^{1/p^e}$ maps $1$ to $v_j.$ 
This induces a degree zero homomorphism 
$$
\bigoplus^j_{i=0} H^i_\m(R(-\gamma_j))\stackrel{\varphi}{\longrightarrow} H^i_\m(R^{1/p^e}).
$$

If $i=d,$ $\varphi$ is surjective. We now prove that $\varphi$ is also surjective for $i\neq d$, if $R$ is $F$-pure. 
In this case, the natural inclusion $R\to R^{1/p^e}$ induces an inclusion $H^i_\m(R)\to H^i_\m( R^{1/p^e})$.
We have that the map $\theta:H^i_\m(R)\otimes_R R^{1/p^e}\to H^i_\m( R^{1/p^e})$
induced by $v\otimes f^{1/p^e}\mapsto f^{1/p^e} \alpha(v)$ is surjective \cite[Lemma 2.5]{AnuragUliPure}.
Then,
$$1\otimes \phi:H^i_\m(R)\otimes_R \left( \bigoplus^r_{j=0} R(-\gamma_j)\right)\to H^i_\m(R)\otimes_R R^{1/p^e}$$
is surjective. Thus, $\varphi$ is surjective, because $\varphi=\theta\circ (1\otimes \phi)$.

We have now that $\varphi$ is surjective under the hypotheses assumed.
Since $$\nu_\m(p^e)/p^e=\max\{\gamma_1,\ldots,\gamma_j\},$$ we have that
$$
\frac{a_i(R)}{p^e}=
a_i(R^{1/p^e})\ls
\max\{a_d(R(-\gamma_i)) \mid i=0,\ldots,j\}
=a_i(R)+\frac{\nu_\m(p^e)}{p^e}.
$$
Then, $a_i(R)\ls p^ea_d(R)+\nu_\m (p^e),$ and so
$(1-p^e)a_i(R)\ls \nu_\m(p^e)$.
Hence,
$$
-a_i(R)=\lim\limits_{e\to \infty}\frac{(1-p^e)a_i(R)}{p^e}\ls \liminf\limits_{e\to \infty}\frac{\nu_\m(p^e)}{p^e}=c^\m_{-}(R).
$$
Finally, we note that if $R$ is $F$-pure $c^\m_{-}(R)=c^\m(R).$
\end{proof}

\section{F-thresholds of graded Gorenstein rings}

Suppose that $(R,\m,K)$ is an $F$-finite standard graded Gorenstein $K$-algebra. Let $S=K[x_1,\ldots,x_n]$, and let $I \subseteq S$ be a homogeneous ideal such that $R\cong S/I$ as graded rings. Since $\Hom_R(R^{1/p},R)$ is a cyclic $R^{1/p}$-module,  we have that for all integers $e \gs 1$ there exist homogeneous polynomials $f_e \in S$ such that $I^{[p^e]}:I = f_eS + I^{[p^e]}$ by Remark \ref{CorrespFedder}. In fact, if $I^{[p]}:I = fS + I^{[p]}$, then $\ds I^{[p^e]}:I = f^{(1+p+ \cdots + p^{e-1})}S + I^{[p^e]}$ for all $e \gs 2$.

\begin{remark} \label{GorPrinc} When $R=S/I$ is $F$-pure, we have $(I^{[p^e]}:I) \not\subseteq \n^{[p^e]}$ by Fedder's criterion.  In the notation used above, if $(I^{[p^e]}:_SI)=f_eS+I^{[p^e]}$ for some homogeneous polynomial $f_e$, we get that
\[
\ds \min\left\{s\in\NN\ \bigg|\left[\frac{(I^{[p^e]}:I)+\n^{[p^e]}}{\n^{[p^e]}}\right]_s\neq 0\right\} = \min\left\{s\in\NN\ \bigg|\left[\frac{f_eS+\n^{[p^e]}}{\n^{[p^e]}}\right]_s\neq 0\right\} = \deg(f_e).
\]
\end{remark}

We now prove the last part of Theorem \ref{MainTheorem}.
\begin{theorem}\label{ThmGor}
Let $(R,\m,K)$ be a Gorenstein standard graded $K$-algebra which is $F$-finite and $F$-pure, and let $d = \dim (R)$. Then we have $\fpt(R)= -a_d(R)$.
\end{theorem}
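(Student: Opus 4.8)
The plan is to combine Theorem \ref{Thm a-inv} with a matching lower bound for $\fpt(R)$, exploiting the Gorenstein hypothesis via Fedder's principality. By Theorem \ref{Thm a-inv} applied with $i=d$ we already have $\fpt(R)\ls -a_d(R)$, so it remains to prove the reverse inequality $\fpt(R)\gs -a_d(R)$. First I would write $R\cong S/I$ with $S=K[x_1,\ldots,x_n]$ a polynomial ring, $\n=(x_1,\ldots,x_n)$, and use the Gorenstein hypothesis to pick homogeneous $f_e\in S$ with $I^{[p^e]}:I=f_eS+I^{[p^e]}$; since $R$ is $F$-pure, $f_e\notin\n^{[p^e]}$ by Fedder's criterion. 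By Lemma \ref{Lemma Lift fpt} together with Remark \ref{GorPrinc}, we get $\deg(f_e)=n(p^e-1)-b_\m(p^e)$, so by Proposition \ref{PropEquivFPT},
\[
\fpt(R)=\lim_{e\to\infty}\frac{b_\m(p^e)}{p^e}=\lim_{e\to\infty}\frac{n(p^e-1)-\deg(f_e)}{p^e}=n-\lim_{e\to\infty}\frac{\deg(f_e)}{p^e}.
\]

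The key step is therefore to relate $\deg(f_e)$ to the $a$-invariant. The canonical module of $S$ is $\omega_S=S(-n)$, and since $R$ is Gorenstein $\omega_R=R(a_d(R))$. The map $\varphi_{f_e,e}\colon R^{1/p^e}\to R$ generates $\Hom_R(R^{1/p^e},R)$, and under the identification of this Hom module with $\omega_R^{1/p^e}$-twisted data (equivalently, by computing degrees in Fedder's isomorphism of Remark \ref{CorrespFedder}), the degree of the generator $f_e$ is forced. Concretely, $\Hom_R(R^{1/p^e},R)$ is a free $R^{1/p^e}$-module of rank one, and tracking the grading in the isomorphism $\frac{(IS^{1/p^e}:I^{1/p^e})}{IS^{1/p^e}}\cong\Hom_R(R^{1/p^e},R)$, whose $S^{1/p^e}$-generator on the left corresponds to $\Phi_e$ of degree $-n(p^e-1)/p^e$ on the right, one computes that the generator $f_e^{1/p^e}$ has degree $\deg(f_e)/p^e$ and must satisfy $\deg(f_e)/p^e=n(p^e-1)/p^e+a_d(R)(1-1/p^e)$ — more precisely, this pins $\deg(f_e)$ in terms of $a_d(R)$ and the degrees entering Fedder. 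The cleanest route: the socle degree of $H^d_\m(R)$ is $a_d(R)$, so by graded local duality the minimal generator of $\omega_R=\Hom_R(H^d_\m(R),E_R(K))$ sits in degree $-a_d(R)$; comparing with $\omega_R=(\omega_S\otimes\Hom_S(R,S))^{\sim}$ and $\omega_S=S(-n)$, the generator of $\Ext^{n-d}_S(R,S)$ lives in degree $n-(-a_d(R))=n+a_d(R)$... I would instead argue directly that $f_e$ generates, as a twisted module, so $\deg f_e = n(p^e-1) + a_d(R)(p^e-1)$, giving $\lim \deg(f_e)/p^e = n + a_d(R)$.

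Plugging this into the displayed computation yields $\fpt(R)=n-(n+a_d(R))=-a_d(R)$, completing the proof. The main obstacle I anticipate is making the degree bookkeeping in the Fedder correspondence rigorous: one must carefully track how the $\frac{1}{p^e}$-grading on $R^{1/p^e}$, the grading on $\Hom$, and the twist by $\omega_R$ interact, and verify that when $(I^{[p]}:I)=fS+I^{[p]}$ one indeed has the stated formula $(I^{[p^e]}:I)=f^{(1+p+\cdots+p^{e-1})}S+I^{[p^e]}$ with the correct degree $\deg(f_e)=(1+p+\cdots+p^{e-1})\deg(f_1)$, together with the identification $\deg(f_1)=(n+a_d(R))(p-1)$ coming from the Gorenstein symmetry. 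An alternative, perhaps cleaner, approach avoiding explicit $f_e$: show directly that for a general linear form the $F$-splitting constructed from the socle element of $H^d_\m(R)$ realizes the bound, but I expect the Fedder computation to be the shortest. Either way, once $\lim\deg(f_e)/p^e=n+a_d(R)$ is established, the theorem follows immediately from Proposition \ref{PropEquivFPT} and Lemma \ref{Lemma Lift fpt}.
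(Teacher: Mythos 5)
Your proposal is correct in substance and shares the paper's outer framework---both arguments reduce the theorem, via Remark \ref{GorPrinc}, Lemma \ref{Lemma Lift fpt} and Proposition \ref{PropEquivFPT}, to showing that the homogeneous Fedder generator $f_e$ of $(I^{[p^e]}:I)$ modulo $I^{[p^e]}$ has $\deg(f_e)=(n+a_d(R))(p^e-1)$---but your key degree computation proceeds by a genuinely different mechanism. The paper lifts the natural map $S/I^{[p]}\to S/I$ to a degree-zero comparison map of minimal graded free resolutions; by Remark \ref{aInvBetti} the last free modules are $S(p(-n-a))$ and $S(-n-a)$, the last component of the chain map is multiplication by a polynomial $f$, a lemma of Vraciu identifies $I^{[p]}:I=fS+I^{[p]}$, and homogeneity of degree zero forces $\deg(f)=(p-1)(n+a)$. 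You instead pin the degree by duality: $\Hom_R(R^{1/p^e},R)$ is a cyclic graded $R^{1/p^e}$-module, and since $R$ is Gorenstein with $\omega_R\cong R(a_d(R))$, graded duality for the finite maximal Cohen--Macaulay extension $R\subseteq R^{1/p^e}$ gives $\Hom_R(R^{1/p^e},\omega_R)\cong\omega_{R^{1/p^e}}\cong(\omega_R)^{1/p^e}$, hence $\Hom_R(R^{1/p^e},R)\cong R^{1/p^e}\bigl(a_d(R)/p^e-a_d(R)\bigr)$; comparing with the degree of $\Phi_e$, which is $-n(p^e-1)/p^e$, inside the graded Fedder isomorphism of Remark \ref{CorrespFedder} then yields $\deg(f_e)=(n+a_d(R))(p^e-1)$. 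This is a legitimate alternative route: it trades the resolution-comparison argument for graded local duality and the identification $\omega_{R^{1/p^e}}\cong\omega_R^{1/p^e}$, and these graded identifications are exactly what you must state and prove carefully, since that bookkeeping (which you yourself flag as the main obstacle) is the entire content of the step the paper handles with the chain-map argument. Two small repairs: your parenthetical claim that the generator of $\Ext^{n-d}_S(R,S)$ sits in degree $n+a_d(R)$ has the wrong sign (it sits in degree $-(n+a_d(R))$, equivalently the top shift in the last module of the resolution is $S(-n-a_d(R))$), though this slip does not affect your final formula; and once $\deg(f_e)=(n+a_d(R))(p^e-1)$ is in hand, the appeal to Theorem \ref{Thm a-inv} is superfluous, since the limit computation gives the equality $\fpt(R)=-a_d(R)$ outright, exactly as in the paper's concluding calculation.
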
 
\begin{proof}
Let $S=K[x_1,\ldots,x_n]$ be a polynomial ring, and let $I \subseteq S$ be a homogeneous ideal such that $R \cong S/I$ as graded rings. Let $\n=(x_1,\ldots,x_n)$, so that $\m = \n R$.  
 Let $a=a_d(R)$.  Consider the natural map $S/I^{[p]} \to S/I$ induced by the inclusion $I^{[p]} \subseteq I$. Then such a map extends to a map of complexes $\psi_\bullet$ from a minimal free resolution of $S/I^{[p]}$ to a minimal free resolution of $S/I$. Furthermore, such a map $\psi_\bullet$ can be chosen graded of degree zero. We have that the last homomorphism in the map of complexes, $S(p(-n-a)) \to S(-n-a)$ is given by multiplication by a homogeneous polynomial $f$ (see Remark \ref{aInvBetti}). Furthermore,   $I^{[p]}:I = fS + I^{[p]}$  \cite[Lemma 1]{VraciuGorTightClosure}. Since $\psi_\bullet$ is homogeneous of degree zero, we have that $\deg(f) =  (p-1)(n+a)$.

Recall that, for all $e \gs 2$, we have that $(I^{[p^e]}:_S I) = f^{(1+p+\ldots +p^{e-1})}S + I^{[p^e]}$. By Remark \ref{GorPrinc} and Lemma \ref{Lemma Lift fpt}, we obtain that
\begin{align*}
\fpt(R)&=\lim\limits_{e\to\infty}\frac{n(p^e-1)-(\deg(f)\cdot (1+p+\ldots +p^{e-1}))}{p^e}\\
&=\lim\limits_{e\to\infty}\frac{n(p^e-1)}{p^e}-\lim\limits_{e\to\infty}\frac{\deg(f)\cdot (1+p+\ldots +p^{e-1})}{p^e}\\
&=n-\frac{\deg(f)}{p-1}\\
&=n-\frac{(p-1)(n+a)}{p-1}\\
&=-a.
\end{align*}
\end{proof}

We now give an example to show that an $F$-finite and $F$-pure standard graded $K$-algebra such that $\fpt(R) = -a_d(R)$ is not necessarily Gorenstein. This is not a counterexample to Conjecture \ref{conj} (2), since the ring we consider is not strongly $F$-regular. 

\begin{example} \label{exnotGor} Let $S=K[x,y,z]$ with $K$ a perfect field of characteristic $p>0$, and let $\n=(x,y,z)$ be its homogeneous maximal ideal. 
\[
\ds I=(xy,xz,yz) = (x,y) \cap (x,z) \cap (y,z) \subseteq S.
\]
Let $R=S/I$, with maximal ideal $\m=\n/I$. Note that $R$ is a one-dimensional Cohen-Macaulay $F$-pure ring. In addition, $\cP(R) = (x,y,z)R$; therefore, $\sdim(R) = 0$ and, by Theorem \ref{bounds_fpt_sdim}, $\fpt(R) = 0$ as well. On the other hand, from the short exact sequence 
\[
\xymatrixcolsep{5mm}
\xymatrixrowsep{2mm}
\xymatrix{
0 \ar[r] & \ds R \ar[rr] && \ds \frac{S}{(x,y)} \oplus \frac{S}{(x,z)\cap(y,z)} \ar[rr] && \ds \frac{S}{(x,y)+(x,z)\cap(y,z)} \cong K \ar[r]& 0
}
\]
we get a long exact sequence of local cohomology modules
\[
\xymatrixcolsep{5mm}
\xymatrixrowsep{2mm}
\xymatrix{
0 \ar[r] & K \ar[r] & H^1_\m(R) \ar[r] & H^1_\n(S/(x,y)) \oplus H^1_\n\left(S/(xy,z)\right) \ar[r] & \ldots
}
\]
The maps in this sequence are homogeneous of degree zero. Thus, $a_1(R) \gs 0$, because $K$ injects into $H^1_\m(R)$. On the other hand, since $R$ is $F$-pure, we have that $a_1(R) \ls 0$; therefore, $\fpt(R) = a_1(R) =0$. Although $R$ is not Gorenstein, since the canonical module $\omega_R \cong (x,y)/(xy,xz+yz)$ has two generators.
\end{example}

Sannai and Watanabe \cite[Theorem 4.2]{SWJPAA} showed that for an $F$-pure standard graded  Gorenstein algebra, $R$, with an isolated singularity, $\sdim(R)=0$ is equivalent to $a_d(R)=0.$ 
The previous theorem recovers this result dropping the hypothesis of isolated singularity. This is because
$\sdim(R)=0$ is equivalent to $\fpt(R)=0.$ In fact, for all $F$-pure rings, Corollary \ref{CorExtSW} shows that $a_d(R)=0$ implies
$\sdim(R)=0.$

We now aim at an interpretation of the $F$-pure threshold of a standard graded Gorenstein $K$-algebra as the maximal length of a regular sequence that preserves $F$-purity.

\begin{proposition}\label{PropFpureSeq}
Let $S=K[x_1,\ldots, x_n]$ be a polynomial ring over an $F$-finite infinite field $K$. Let $\n=(x_1,\ldots,x_n)$ denote the maximal homogeneous ideal.
Let $I\subseteq S$ be a homogeneous ideal such that  $R=S/I$ is an $F$-pure ring, and let $\m=\n R$. Let $f\in (I^{[p]}:I)\smallsetminus \n^{[p]}$. If $\deg(f)\ls (p-1)(n-1)$, then there exists a linear form $\ell\in S$  such that:
\begin{enumerate}
\item  $\ell^{p-1}f\not\in \n^{[p]}$. 
\item the class of $\ell$ in $R$ does not belong to $\cP(R)$.
\item $\ell$ is  a non-zero divisor in $R$.
\end{enumerate}
\end{proposition}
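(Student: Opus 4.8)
The statement asks for a single linear form $\ell$ satisfying three conditions simultaneously. Since $K$ is infinite, the natural strategy is to show that each of the three conditions fails only on a proper closed (or at worst thin) subset of the space $\PP(S_1)$ of linear forms, and then conclude that a generic $\ell$ works. I would introduce the coordinate notation: a linear form $\ell = c_1 x_1 + \cdots + c_n x_n$ corresponds to the point $[c_1 : \cdots : c_n] \in \PP^{n-1}(K)$, and the conditions will be phrased as open conditions on this point.

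First I would handle condition (1), which is the heart of the matter. Write $f = \sum_\alpha u_\alpha x^\alpha$ where the sum runs over exponent vectors $\alpha$ with $0 \le \alpha_i \le p-1$ for all $i$ (we may reduce $f$ modulo $\n^{[p]}$ since $f \notin \n^{[p]}$, so at least one such $\alpha$ occurs with $u_\alpha \ne 0$). Then $\ell^{p-1} f = \sum_\alpha u_\alpha \ell^{p-1} x^\alpha$, and I want to detect the coefficient of the monomial $x^{\boldsymbol{p^e-1}} = x_1^{p-1} \cdots x_n^{p-1}$ in $\ell^{p-1} f$ — call it $\Psi(\ell)$; by the definition of $\Phi_1$ / Fedder's criterion, $\ell^{p-1} f \notin \n^{[p]}$ exactly when $\Psi(\ell) \ne 0$. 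The key point is that $\Psi$ is a nonzero polynomial in the $c_i$: because $\deg(f) \le (p-1)(n-1)$, every monomial $x^\alpha$ appearing in $f$ has $|\alpha| \le (p-1)(n-1)$, so the "complementary" exponent $\boldsymbol{p-1} - \alpha$ (which may have negative entries a priori, but when it does the corresponding term contributes $0$) has total degree $\ge p-1$ whenever $\alpha$ itself is a legal exponent, and one checks that for the minimal-degree monomial of $f$ the contribution to $\Psi$ from expanding $\ell^{p-1}$ by the multinomial theorem is a nonzero monomial in the $c_i$ that cannot be cancelled by other terms — so $\Psi \not\equiv 0$. Hence condition (1) holds on a nonempty Zariski-open subset $U_1$ of $\PP^{n-1}(K)$, and since $K$ is infinite, $U_1(K) \ne \emptyset$.

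Next, condition (2): by Proposition \ref{splitting prime properties}, $\cP(R)$ is a prime ideal, and since $R$ is $F$-pure with $\deg(f) \le (p-1)(n-1) < (p-1)n$, Lemma \ref{Lemma Lift fpt} (together with Remark \ref{GorPrinc}-type reasoning applied to the minimal degree in $(I^{[p]}:I)$) shows that $b_\m(p) \ge 1$, i.e. $\m \not\subseteq I_1(R)$, so $\cP(R) \ne \m$; thus $\cP(R)_1 \subsetneq S_1$ is a proper linear subspace, and its complement is a nonempty open set $U_2$. Finally, condition (3): $\ell$ is a nonzerodivisor on $R$ precisely when $\ell$ avoids all the minimal primes of $R$ (which has no embedded primes only if ... — actually one must avoid all associated primes of $R$, but since $R$ is $F$-pure it is reduced, so $\Ass(R) = \Min(R)$); each minimal prime $\p$ of $R$ has $\p_1 \subsetneq S_1$ a proper subspace since $\dim R \ge 1$ forces $\p \ne \m$ (here we use $\deg(f) \le (p-1)(n-1)$, which forces $n \ge 2$ and, via the $a$-invariant or the splitting prime, $\dim R \ge 1$), so again we avoid a finite union of proper subspaces on a nonempty open $U_3$. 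Since $K$ is infinite, $U_1 \cap U_2 \cap U_3$ has a $K$-point, and any such $\ell$ works.

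**Main obstacle.** The routine parts are conditions (2) and (3) — these are standard prime-avoidance arguments over an infinite field. The genuine content is condition (1): one must verify carefully that the function $\ell \mapsto \big[\text{coefficient of } x^{\boldsymbol{p-1}} \text{ in } \ell^{p-1} f\big]$ is not identically zero as a polynomial in the coefficients of $\ell$, and this is exactly where the degree hypothesis $\deg(f) \le (p-1)(n-1)$ is used — it guarantees that multiplying by $\ell^{p-1}$ can push the support of $f$ up to include the socle monomial $x^{\boldsymbol{p-1}}$, rather than overshooting into $\n^{[p]}$ unavoidably. I expect the author's proof to pin down the nonvanishing of this coefficient polynomial by choosing the lexicographically (or degree-) extremal monomial in $f$ and tracking a single surviving multinomial term; the bookkeeping there, while elementary, is the step that requires care.
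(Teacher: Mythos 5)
Your overall strategy (a generic linear form $\ell_y=y_1x_1+\cdots+y_nx_n$, nonvanishing of a polynomial in the coefficients for condition (1), and avoidance of $\cP(R)$ and of the minimal primes for (2) and (3)) is the same as the paper's, and your treatment of (2) and (3) is essentially fine (the paper even gets (3) for free from (2), since every minimal prime of the reduced ring $R$ is a compatible ideal and hence contained in $\cP(R)$). However, there is a genuine gap in your key step, condition (1). You define $\Psi(\ell)$ as the coefficient of $x_1^{p-1}\cdots x_n^{p-1}$ in $\ell^{p-1}f$ and claim both that $\ell^{p-1}f\notin\n^{[p]}$ ``exactly when'' $\Psi(\ell)\neq 0$ and that $\Psi\not\equiv 0$. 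The equivalence is false (a polynomial lies outside $\n^{[p]}$ as soon as \emph{some} monomial with all exponents $\ls p-1$ survives, not necessarily the socle monomial), and, more seriously, the nonvanishing claim fails: since $f$ is homogeneous, $\ell^{p-1}f$ is homogeneous of degree $p-1+\deg(f)$, which is strictly less than $n(p-1)=\deg\bigl(x_1^{p-1}\cdots x_n^{p-1}\bigr)$ whenever $\deg(f)<(p-1)(n-1)$. So in every non-extremal case $\Psi$ is identically zero, and your observation that the complementary exponent $\boldsymbol{p-1}-\alpha$ has total degree $\gs p-1$ actually shows this: only monomials of $f$ of degree exactly $(p-1)(n-1)$ can contribute, so the ``surviving multinomial term'' you invoke does not exist in general.

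The correct choice of target monomial is where the degree hypothesis is really used, and it is what the paper does: pick $x^\beta\in\Supp(f)$ with $x^\beta\notin\n^{[p]}$ (so $\beta_i\ls p-1$ for all $i$); since $\sum_i(p-1-\beta_i)=n(p-1)-|\beta|\gs p-1$, a pigeonhole argument produces $\gamma$ with $|\gamma|=p-1$ and $\beta_i+\gamma_i\ls p-1$ for all $i$, so $x^{\beta+\gamma}\notin\n^{[p]}$. One then shows that the coefficient $h(y)$ of $x^{\beta+\gamma}$ in $\ell_y^{p-1}f$ is a nonzero polynomial in $y$: writing $\ell_y^{p-1}=\sum_{|\theta|=p-1}\binom{p-1}{\theta}y^\theta x^\theta$, distinct $\theta$ contribute distinct monomials $y^\theta$, and the term with $\theta=\gamma$ (forcing $\alpha=\beta$) has coefficient $c_\beta\binom{p-1}{\gamma}\neq 0$ in $K$, so no cancellation can occur. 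With this replacement for your $\Psi$, the rest of your argument (choosing $v\in K^n$ outside the zero locus of $h$ and outside the proper subspace $\cP(R)_1$, using that $K$ is infinite) goes through as in the paper.
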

\begin{proof}
Let us pick $c_\alpha\in K$ such that $f=\sum_{|\alpha |=\deg(f)} c_\alpha x^{\alpha}$. Let 
$$
\ell_y=y_1x_1+\ldots +y_n x_n\in S[y_1,\ldots,y_n]
$$
be a generic linear form. We note that 
$$
\left(\ell_y\right)^{p-1}=\sum_{|\theta|=p-1} g_\theta(y)x^{\theta},
$$ 
where $g_\theta(y)=\frac{(p-1)!}{\theta_1 !\cdots \theta_n!}y^\theta \in K[y_1,\ldots,y_n]$.

Since $f\not\in \n^{[p]},$ there exists $x^\beta\in\Supp \{f\}$ such that $x^\beta\not \in \n^{[p]}$. Since $|\beta| \leq (p-1)(n-1)$ and $x^{\beta}\not\in \n^{[p]}$, there exists $x^{\gamma}\in \n^{p-1}$ such that $x^{\gamma}x^{\beta}\not\in \n^{[p]}$ by Pigeonhole Principle. Let 
$$
h:=\sum_{\beta+\gamma=\theta+\alpha} c_{\alpha}g_\theta (y)\in K[y_1,\ldots,y_n]
$$
We note that $h\neq 0$ because $c_\beta g_\theta\neq 0$. In addition, $h$ is the coefficient of $x^{\theta+\gamma}$ in $\left(\ell_y\right)^{p-1} f$. We note that $\cP(R)\neq \m$ by Lemma \ref{LemmaSP}, and thus $\cP(R)\cap \m \neq \m$. Since $K$ is an infinite field, we can pick a point $v\in K^n$ such that $h(v)\neq 0$ and the class of $\ell_y(v)$ does not belong to $\cP(R)$. We set $\ell=\ell_y (v).$ By our construction of $\ell$, $x^{\beta+\gamma}\in\Supp\{\ell^{p-1}f\}$ and  $x^{\beta+\gamma}\not\in \n^{[p]}$. In addition, $\ell \notin\cP(R).$ Since the pullback of $\cP(R)$ to $S$ contains every associated prime of $R$, we have that $\ell$ is a non-zero divisor in $R$.
\end{proof}

Note that, for $\ell$ as in Proposition \ref{PropFpureSeq}, if we set $I':=I+(\ell)$ we have that the ring $S/I'$ is again $F$-pure. In fact, for $f$ as above, we have that $\ell^{p-1}f \in (I'^{[p]}:I') \smallsetminus \n^{[p]}$, and $F$-purity follows by Fedder's criterion \cite[Theorem 1.12]{Fedder}.

As a consequence of these results, and of Theorem \ref{ThmGor}, we give an interpretation of the $F$-pure threshold, and the $a$-invariant, in terms of the maximal length of a regular sequence that preserves $F$-purity. We start by introducing the concept of $F$-pure regular sequence.

\begin{definition}\label{Def F-pure seq}
Let $R$ be an $F$-finite $F$-pure ring. We say that a regular sequence $f_1,\ldots, f_r$ is $F$-pure if 
$R/(f_1,\ldots,f_i)$ is an $F$-pure ring for all  $i=1,\ldots,r.$ 
\end{definition}

\begin{lemma}\label{LemmaInqRegSep}
Let $(R,\m,K)$ be a standard graded $K$-algebra. If $f$ is a regular element of degree $d>0$, then $d+a_{i}(R) \ls a_{i-1}(R/(f))$ for all $i \in \NN$ such that $H^i_\m(R) \ne 0$.
\end{lemma}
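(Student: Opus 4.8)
The plan is to use the short exact sequence $0 \to R(-d) \xrightarrow{f} R \to R/(f) \to 0$ induced by multiplication by the regular element $f$, where the first map is homogeneous of degree $d$ once we shift by $R(-d)$. Applying the local cohomology functor $H^\bullet_\m(-)$ gives a long exact sequence whose connecting homomorphisms are homogeneous of degree zero. First I would extract from this sequence the piece
\[
H^{i-1}_\m(R/(f)) \longrightarrow H^i_\m(R)(-d) \xrightarrow{\ f\ } H^i_\m(R),
\]
so that the image of $H^{i-1}_\m(R/(f)) \to H^i_\m(R)(-d)$ is exactly the kernel of multiplication by $f$ on $H^i_\m(R)(-d)$.

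Next I would identify a nonzero element in the top graded piece of this kernel. Since $R$ is a standard graded $K$-algebra, $H^i_\m(R)$ is an Artinian graded module, so its top nonzero degree is $a_i(R)$ (which is finite by hypothesis, as $H^i_\m(R) \ne 0$). Pick $0 \ne v \in H^i_\m(R)_{a_i(R)}$. Multiplication by $f$ raises degree by $d > 0$, hence $f \cdot v \in H^i_\m(R)_{a_i(R) + d} = 0$ by maximality of $a_i(R)$. Therefore $v$ lies in the kernel of multiplication by $f$ on $H^i_\m(R)$, equivalently $v$, viewed in $H^i_\m(R)(-d)$ in degree $a_i(R) + d$, lies in the image of the connecting map from $H^{i-1}_\m(R/(f))$.

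Now I would pull $v$ back: by exactness there is a nonzero $w \in H^{i-1}_\m(R/(f))$ mapping to $v$, and since the connecting map is homogeneous of degree zero, $w$ has degree $a_i(R) + d$. A nonzero element of degree $a_i(R) + d$ in $H^{i-1}_\m(R/(f))$ forces $a_{i-1}(R/(f)) \gs a_i(R) + d$, which is the claimed inequality $d + a_i(R) \ls a_{i-1}(R/(f))$. The only point requiring a little care is that the map $R(-d) \xrightarrow{f} R$ is genuinely injective as graded modules — this is precisely the hypothesis that $f$ is a regular (nonzerodivisor) element — and that the grading conventions make the connecting homomorphism degree-preserving, which is the standard fact recorded earlier in Section \ref{Background} that a homogeneous map of degree $c$ induces degree-$c$ maps on local cohomology. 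There is no serious obstacle here; the argument is a clean diagram chase once the degree bookkeeping is set up, and the "hard part," such as it is, is merely making sure the shift $R(-d)$ is placed on the correct side so that the surviving nonzero socle-type element in top degree of $H^i_\m(R)$ produces an element one degree-grade higher in $H^{i-1}_\m(R/(f))$.
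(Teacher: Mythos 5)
Your argument is correct and is essentially the paper's own proof: both use the long exact sequence in local cohomology coming from $0 \to R(-d) \xrightarrow{f} R \to R/(f) \to 0$ and the fact that $H^i_\m(R)_{a_i(R)+d}=0$ (since $d>0$) to produce a nonzero element of $H^{i-1}_\m(R/(f))$ in degree $a_i(R)+d$. The paper phrases this as a surjection $H^{i-1}_\m(R/(f))_{a_i(R)+d} \onto H^i_\m(R)_{a_i(R)}$, while you lift a top-degree element killed by $f$; these are the same diagram chase.
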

\begin{proof}
Suppose that $H^i_\m(R) \ne 0$. Consider the homogeneous short exact sequence
\[
\xymatrixcolsep{5mm}
\xymatrixrowsep{2mm}
\xymatrix{
0 \ar[rr] && R(-d) \ar[rr]^-{ f} && R \ar[rr]&& R/(f) \ar[rr] && 0.
}
\]
For all $j \in \ZZ$, this gives rise to an exact sequence of $K$-vector spaces
\[
\xymatrixcolsep{5mm}
\xymatrixrowsep{2mm}
\xymatrix{
\ldots \ar[rr] && H^{i-1}_\m(R/(f))_j \ar[rr] && H^i_\m(R)_{j-d} \ar[rr] && H^i_\m(R)_j\ar[rr] && \ldots
}
\]
Since $d>0$, for $j = a_i(R) + d$ we have that $H^i_\m(R)_j = 0$. Then,  
\[
\xymatrixcolsep{5mm}
\xymatrixrowsep{2mm}
\xymatrix{
H^{i-1}_\m(R/(f))_{a_i(R)+d} \ar[rr]&& H^i_\m(R)_{a_i(R)} \ar[rr]&& 0
}
\]
is a surjection. We note that  $H^i_\m(R)_{a_i(R)} \ne 0$, which yields $H^{i-1}_\m(R/(f))_{a_i(R) + d} \ne 0$, and hence $a_{i-1}(R/(f)) \gs a_i(R)+d$. 
\end{proof}
\begin{corollary} \label{CorInqRegSep} Let $(R,\m,K)$ be a standard graded $K$-algebra which is $F$-finite and $F$-pure. If $f_1,\ldots,f_r$ is a homogeneous $F$-pure regular sequence of degrees $d_1,\ldots,d_r$, then $\sum_{j=1}^r d_j \ls \min\{- a_i(R) \mid i \in \NN\}$.
\end{corollary}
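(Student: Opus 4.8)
Looking at Corollary \ref{CorInqRegSep}, the plan is to proceed by induction on $r$, using Lemma \ref{LemmaInqRegSep} as the engine, and combining it at the final step with the already-proven inequality $\fpt(R) \ls -a_i(R)$ from Theorem \ref{Thm a-inv}. The key observation is that if $f_1, \ldots, f_r$ is an $F$-pure regular sequence in $R$, then $f_2, \ldots, f_r$ (taken as elements of $R/(f_1)$) is again an $F$-pure regular sequence in the $F$-pure ring $R/(f_1)$, so the inductive hypothesis applies to $R/(f_1)$.

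\textbf{Plan.} First I would record the base case $r=0$: the empty sum is $0$, and since $R$ is $F$-pure, $\fpt(R) \gs 0$, while Theorem \ref{Thm a-inv} gives $\fpt(R) \ls -a_i(R)$ for every $i$, so $0 \ls \min\{-a_i(R) \mid i \in \NN\}$ (with the convention that $-a_i(R) = +\infty$ when $H^i_\m(R) = 0$, which does not affect the minimum as long as some local cohomology is nonzero; and if all $H^i_\m(R)$ vanish the statement is vacuous). For the inductive step with $r \gs 1$, set $R' = R/(f_1)$, which is $F$-finite and $F$-pure by Definition \ref{Def F-pure seq}. By Lemma \ref{LemmaInqRegSep}, for every $i$ with $H^i_\m(R) \ne 0$ we have $d_1 + a_i(R) \ls a_{i-1}(R')$, equivalently $-a_{i-1}(R') \ls d_1 - a_i(R)$, i.e. $- a_i(R) \gs d_1^{-1}\cdot 0 + \ldots$; more cleanly, $-a_i(R) \gs -a_{i-1}(R') + d_1$ — wait, I must be careful with the direction: from $d_1 + a_i(R) \ls a_{i-1}(R')$ we get $-a_{i-1}(R') \ls -a_i(R) - d_1$, hence $-a_i(R) \gs d_1 - a_{i-1}(R')$. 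That is the wrong direction for a lower bound on $\min\{-a_i(R)\}$, so instead I should read the inequality as $-a_{i-1}(R') \leqslant -a_i(R) - d_1$, giving $-a_i(R) \geqslant d_1 + (-a_{i-1}(R'))$ — no. Let me restate: Lemma \ref{LemmaInqRegSep} yields $a_{i-1}(R') \gs a_i(R) + d_1$, so $-a_{i-1}(R') \ls -a_i(R) - d_1$, i.e. $-a_i(R) \gs d_1 - a_{i-1}(R')$. Applying the inductive hypothesis to $R'$ and the $F$-pure regular sequence $\overline{f_2}, \ldots, \overline{f_r}$ of degrees $d_2, \ldots, d_r$ gives $\sum_{j=2}^r d_j \ls \min\{-a_k(R') \mid k \in \NN\} \ls -a_{i-1}(R')$ for each relevant $i$, and hence $-a_i(R) \gs d_1 - a_{i-1}(R') \gs d_1 + \sum_{j=2}^{r}d_j$ whenever $-a_{i-1}(R') \gs \sum_{j=2}^r d_j$; combining, $\sum_{j=1}^r d_j \ls -a_i(R)$ for all such $i$, hence for the minimum.

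\textbf{Main obstacle.} The subtlety is bookkeeping the index shift and the behavior of local cohomology under the quotient: Lemma \ref{LemmaInqRegSep} only speaks about indices $i$ with $H^i_\m(R) \ne 0$, and I must ensure that $\min\{-a_i(R) \mid i\}$ is realized at such an index and track how $a_{i-1}(R')$ relates to the minimum over $R'$. One must also verify that $\overline{f_2}, \ldots, \overline{f_r}$ really is a regular sequence on $R' = R/(f_1)$ — this is standard since $f_1, \ldots, f_r$ is a regular sequence on $R$ — and that each successive quotient $R/(f_1, \ldots, f_i) \cong R'/(\overline{f_2}, \ldots, \overline{f_i})$ is $F$-pure, which is exactly the hypothesis of an $F$-pure regular sequence. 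A cleaner route avoiding index gymnastics: just iterate Lemma \ref{LemmaInqRegSep} directly $r$ times, obtaining for any $i$ with the relevant local cohomologies nonvanishing that $d_1 + \cdots + d_r + a_i(R) \ls a_{i-r}(R/(f_1,\ldots,f_r))$, and then bound the right-hand side using that $R/(f_1,\ldots,f_r)$ is $F$-pure together with Remark \ref{FpureNeg} (which gives $a_j \ls 0$) — since $a_{i-r} \ls 0$, we get $\sum_j d_j \ls -a_i(R)$ directly. I would present this second approach, as it sidesteps Theorem \ref{Thm a-inv} entirely and needs only Lemma \ref{LemmaInqRegSep} and Remark \ref{FpureNeg}.
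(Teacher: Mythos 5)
Your proposal is correct and, despite the back-and-forth over inequality directions (the final signs come out right), it is essentially the paper's own argument: induction on $r$, applying Lemma \ref{LemmaInqRegSep} to pass to $R/(f_1)$ and using $F$-purity of the quotients via Remark \ref{FpureNeg} (the paper's base case $r=1$ plays the role of your $r=0$ case). Your ``cleaner'' unrolled variant --- iterate Lemma \ref{LemmaInqRegSep} $r$ times, noting the lemma itself guarantees the needed nonvanishing at each step, then apply Remark \ref{FpureNeg} once --- is just the same induction written out, so there is no substantive difference from the paper.
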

\begin{proof}
We proceed by induction on $r \gs 1$. Assume that $r=1$. If $H^i_\m(R) = 0$, we have that $d_1 \ls -a_i(R) = \infty$, therefore there is nothing to prove in this case. If $H^i_\m(R) \ne 0$, by Lemma \ref{LemmaInqRegSep} we have that $a_i(R)+d_1 \ls a_{i-1}(R/(f_1))$. Since $R/(f_1)$ is $F$-pure, it follows from Remark \ref{FpureNeg} that $a_{i-1}(R/(f)) \ls 0$, and hence $d_1 \ls -a_i(R)$. Thus, $d_1 \ls -a_i(R)$ for all $i \in \NN$, that is, $d_1 \ls \min\{ -a_i(R) \mid i \in \NN\}$. This concludes the proof of the base case. For $r > 1$, if $H^i_\m(R) = 0$ we have that $\sum_{i=1}^r d_i \ls -a_i(R) = \infty$ and, again, there is nothing to prove in this case. Assume that $H^i_\m(R) \ne 0$. By induction, we get that $\sum_{j=2}^r d_j \ls -a_s(R/(f_1))$ for all $s \in \NN$. In particular, we have that $\sum_{j=2}^r d_j \ls -a_{i-1}(R/(f_1))$. By Lemma \ref{LemmaInqRegSep}, we have that $-a_{i-1}(R/(f_1)) \gs -a_i(R) - d_1$. Combining the two inequalities, and rearranging the terms in the sum, we obtain $\sum_{j=1}^r d_i \ls -a_i(R)$. Therefore, we obtain $\sum_{j=1}^r d_j \ls \min\{-a_i(R) \mid i \in \NN\}$.
\end{proof}

\begin{theorem}\label{ThmFpureSeq}
Let $(R,\m,K)$ be a Gorenstein standard graded $K$-algebra which is $F$-finite and $F$-pure, and let $d=\dim(R)$. 
If $f_1,\ldots,f_r$ is an $F$-pure regular sequence, then $r\ls \fpt(R)$. Furthermore, if $K$ is infinite, then there exists an  $F$-pure regular sequence consisting of $\fpt(R)$  linear forms.
\end{theorem}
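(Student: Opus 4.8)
\textbf{Proof proposal for Theorem \ref{ThmFpureSeq}.}

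The statement has two halves, and the first is essentially already in hand. For any $F$-pure regular sequence $f_1,\ldots,f_r$ in a Gorenstein standard graded $F$-pure $K$-algebra $R$, Corollary \ref{CorInqRegSep} gives $\sum_{j=1}^r \deg(f_j) \ls \min\{-a_i(R) \mid i \in \NN\}$. In the Gorenstein case $R$ is Cohen-Macaulay, so $H^i_\m(R) = 0$ for $i \neq d$ and the minimum is just $-a_d(R)$. By Theorem \ref{ThmGor}, $-a_d(R) = \fpt(R)$. Since each $\deg(f_j) \gs 1$, we get $r \ls \sum_{j=1}^r \deg(f_j) \ls \fpt(R)$, which proves the first assertion.

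For the second half, assume $K$ is infinite and set $s = \fpt(R)$; note $s$ is a nonnegative integer here because, by the computation in the proof of Theorem \ref{ThmGor}, $\fpt(R) = -a_d(R) \in \ZZ$. The plan is to build the linear forms one at a time, each time applying Proposition \ref{PropFpureSeq} to the current quotient, and to keep track of how the relevant degree bound drops by exactly $1$ at each step so that the hypothesis of the proposition remains satisfied until we have produced $s$ forms. Concretely: write $R = S/I$ with $S = K[x_1,\ldots,x_n]$, $\n = (x_1,\ldots,x_n)$, and pick a homogeneous $f \in (I^{[p]}:I) \smallsetminus \n^{[p]}$ with $(I^{[p]}:I) = fS + I^{[p]}$ (possible since $R$ is Gorenstein and $F$-pure). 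By the proof of Theorem \ref{ThmGor}, $\deg(f) = (p-1)(n + a)$ where $a = a_d(R) = -s$, i.e.\ $\deg(f) = (p-1)(n-s)$. As long as $s \gs 1$ we have $\deg(f) = (p-1)(n-s) \ls (p-1)(n-1)$, so Proposition \ref{PropFpureSeq} applies and yields a linear form $\ell_1 \in S$ with $\ell_1^{p-1} f \notin \n^{[p]}$, $\ell_1 \notin \cP(R)$, and $\ell_1$ a nonzerodivisor on $R$. As remarked after Proposition \ref{PropFpureSeq}, $R_1 := R/(\ell_1) = S/(I + (\ell_1))$ is again $F$-pure, with $\ell_1^{p-1} f$ serving as (a multiple of) a generator of $((I+(\ell_1))^{[p]} : (I+(\ell_1)))$ modulo $(I+(\ell_1))^{[p]}$. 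Moreover $R_1$ is Gorenstein of dimension $d - 1$, and its $a$-invariant satisfies $a_{d-1}(R_1) = a_d(R) + \deg(\ell_1) = -s + 1$ by the standard short exact sequence computation (Lemma \ref{LemmaInqRegSep} gives one inequality, and equality holds because cutting a Gorenstein ring by a linear nonzerodivisor shifts the $a$-invariant by the degree of the element). Hence $\fpt(R_1) = -a_{d-1}(R_1) = s - 1$ by Theorem \ref{ThmGor}. Now repeat: after $j$ steps we have a Gorenstein $F$-pure quotient $R_j = R/(\ell_1,\ldots,\ell_j)$ with $\fpt(R_j) = s - j$, and as long as $s - j \gs 1$ the degree bound $(p-1)(\dim R_j - \fpt(R_j)) = (p-1)((n-j) - (s-j)) = (p-1)(n-s) \ls (p-1)(\dim R_j - 1)$ still holds (here $\dim R_j - 1 = n - j - 1 \gs s - j \gs 1$ precisely when $s \le n-1$, which is automatic since $\fpt(R) \le \dim R = d \le n$ and we may assume $s \le d$; the edge case $s = d = n$, i.e.\ $R$ a polynomial ring, has to be checked separately but is trivial). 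Continuing until $j = s$ produces linear forms $\ell_1,\ldots,\ell_s$ forming an $F$-pure regular sequence, and $s = \fpt(R)$.

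The main obstacle, and the place that needs the most care, is the bookkeeping of the $a$-invariant (equivalently the $\fpt$) of the successive quotients $R_j$: one must verify that $\fpt$ really drops by exactly $1$ at each cut, not merely by at least $1$, since otherwise the induction might terminate early or the degree hypothesis of Proposition \ref{PropFpureSeq} might fail prematurely. This follows from the Gorenstein structure — cutting by a linear nonzerodivisor preserves Gorensteinness and shifts the canonical module, hence the $a$-invariant, by exactly the degree of the element — combined with Theorem \ref{ThmGor}. A secondary point is to confirm that the generator of the new Fedder ideal for $R_j$ can be taken to be the image of $\ell_1^{p-1}\cdots\ell_j^{p-1} f$, which is exactly what the remark following Proposition \ref{PropFpureSeq} provides inductively, so that $\deg$ of that generator is $j(p-1) + (p-1)(n-s) = (p-1)(n-s+j)$, matching $(p-1)(\dim R_j + a_{d-j}(R_j))$ as it must by the Gorenstein computation. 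One should also note that the $\ell_i$ can be chosen so that $\ell_1,\ldots,\ell_j$ is a genuine regular sequence at each stage, which is guaranteed by part (3) of Proposition \ref{PropFpureSeq} applied to $R_{j-1}$.
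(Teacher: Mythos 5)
Your proof is correct and follows essentially the same route as the paper: the first claim via Corollary \ref{CorInqRegSep} together with Theorem \ref{ThmGor}, and the second by iteratively applying Proposition \ref{PropFpureSeq} (with the remark following it) to the Gorenstein Fedder generator of degree $(p-1)(n+a_d(R))$ and using the exact sequence argument to see that the $a$-invariant, hence $\fpt$, drops by exactly $1$ at each linear cut. The only blemish is a notational conflation of $\dim R_j$ with the number of variables of the presenting polynomial ring in your degree bookkeeping (and the resulting phantom ``edge case''), but this is harmless since the hypothesis of Proposition \ref{PropFpureSeq} reduces precisely to $\fpt(R_j)\gs 1$, which is what you check.
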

\begin{proof}
By Theorem \ref{ThmGor}, we have that $\fpt(R) = -a_d(R)$. The first claim follows from Corollary \ref{CorInqRegSep}. For the second claim, let $S=K[x_1,\ldots,x_n]$ be a polynomial ring and let $I \subseteq S$ be a homogeneous ideal such that $R \cong S/I$ as graded rings. 
We proceed by induction on $\fpt(R)$. The case $\fpt(R) = 0$ is trivial. We now assume $\fpt(R)> 0$. From the proof of Theorem \ref{ThmGor}, we have that $(I^{[p]}:_S I) = fS+I^{[p]}$ for a homogeneous polynomial $f\in (I^{[p]}:_SI) \smallsetminus \n^{[p]}$ of degree $\deg(f) \ls (p-1)(n+a_d(R))$. Since $a_d(R)=-\fpt(R)<0$ by assumption,  there exists a linear non-zero divisor $\ell_1 \in R$ such that $R/(\ell_1)$ is $F$-pure by Proposition \ref{PropFpureSeq}. Note that, from the homogeneous short exact sequence
\[
\xymatrixcolsep{5mm}
\xymatrixrowsep{2mm}
\xymatrix{
0 \ar[r] & H^{d-1}_\m(R/(\ell_1)) \ar[r] & H^d_\m(R)(-1) \ar[r]^-{\ell_1} & H^d_\m(R) \ar[r] & 0,
}
\]
it follows that $a_{d-1}(R/(\ell_1)) = a_d(R)+1$. Since $R/(\ell_1)$ is  Gorenstein, we have that $\fpt(R/(\ell_1)) = -a_{d-1}(R/(\ell_1)) = -a_d(R) - 1 = \fpt(R)-1$. The claim follows by induction.
\end{proof}

\section{Results in characteristic zero} \label{char0}

In this section we present results in characteristic zero that are analogous to Theorem \ref{Thm a-inv}. These results are motivated by the relation between the log-canonical and the $F$-pure threshold.

 We first fix the notation. Let $K$ be a field of characteristic zero, and let $(R,\m,K)$ be a $\QQ$-Gorenstein normal standard graded $K$-algebra. Consider the closed subscheme $V(\m) = Y \subseteq X = \Spec(R)$, and let $\a$ be the corresponding ideal sheaf. 
We now use Hironaka's resolution of singularities \cite{Hironaka}.
Suppose that $f:\widetilde X \to X$ is a log-resolution of the pair $(X,Y)$, that is, $f$ is a proper birational morphism with $\widetilde X$ nonsingular such that the ideal sheaf $\a\mathcal{O}_{\widetilde{X}} = \OO_{\widetilde{X}}(-F)$ is invertible,  and $\Supp(F) \cup {\rm Exc}(f)$ is a simple normal crossing divisor.  Let $K_X$ and $K_{\widetilde{X}}$ denote canonical divisors of X and $\widetilde X$, respectively.

Let $\lambda \gs 0 $ be a real number. Then there are finitely many irreducible (not necessarily exceptional) divisors $E_i$ on $\widetilde X$ and real numbers $a_i$ so that there exists an $\RR$-linear equivalence of $\RR$-divisors
\[
\ds K_{\widetilde{X}} \sim f^* K_X + \sum_i a_i E_i + \lambda F.
\]
\begin{definition}
Continuing with the previous notation, we say that the pair $(X, \lambda Y)$ is log canonical, or lc for short, if $a_i \gs -1$ for all $i$. Define
\[
\ds \lct(X) = \sup \{\lambda  \in \RR_{\gs 0}   \mid \mbox{ the pair } (X,\lambda Y) \mbox{ is lc}\}.
\]
We say that  $(X, \lambda Y)$ is Kawamata log-terminal, or  klt for short, if $a_i>-1 $ for
all $i.$
\end{definition}
\begin{remark}\label{Rem lct-klt}
If $X$ is log-terminal, we have that
\[
\ds \lct(X) = \sup \{\lambda  \in \RR_{\gs 0}   \mid \mbox{ the pair } (X,\lambda Y) \mbox{ is klt}\}.
\]
\end{remark}

\begin{definition}
Let $K$ be a field of positive characteristic $p$, and let $(R,\m,K)$ be a standard graded $K$-algebra which is $F$-finite and $F$-pure. Let $I \subseteq R$ be a homogeneous ideal. For a real number $\lambda\gs 0$, we say that $(R,I^\lambda)$ is \emph{ strongly $F$-regular} if for every $c \in R$ not in any minimal prime, there exists $e \gs 0$ and an element $f\in I^{\lceil p^e\lambda\rceil}$ such that the inclusion of $R$-modules $(cf)^{1/p^e}R\subseteq R^{1/p^e}$ splits.
\end{definition} 

\begin{remark}\label{Rem sfr-fp}
Let $(R,\m,K)$ be a standard graded $K$-algebra which is $F$-finite and strongly $F$-regular.  
Then, by \cite[Proposition 2.2 (5)]{TW2004}, we have
$$\fpt(R)=\sup \{\lambda  \in \RR_{\gs 0}   \mid \mbox{the pair } (R,\m^{\lambda}) \mbox{ is strongly }F\mbox{-regular}\}.$$ 
\end{remark}

\begin{definition}
Let $R$ be a reduced algebra essentially of finite type over a field $K$ of characteristic zero, $\a \subseteq R$ an ideal, and $\lambda > 0$ a real number.
The pair $(R,\a^\lambda)$ is said to be of dense $F$-pure type (respectively strongly $F$-regular type) if there exist a finitely generated $\ZZ$-subalgebra $A$ of $K$ and a reduced subalgebra $R_A$ of $R$ essentially of finite type over $A$ which satisfy the following conditions:
\begin{enumerate}[(i)]
\item $R_A$ is flat over $A$, $R_A \otimes_A K \cong R$, and $\a_AR = \a$, where $\a_A = \a \cap R_A \subseteq R_A$.
\item The pair $(R_s, \a_s^\lambda)$ is $F$-pure (respectively strongly $F$-regular) for every closed point $s$ in a dense subset of $\Spec(A)$. Here, if $\kappa(s)$ denotes the residue field of $s$, we define $R_s = R_A \otimes_A \kappa(s)$ and $\a_s = \a_AR_s \subseteq R_s$.
\end{enumerate}
\end{definition}

\begin{remark} \label{aInvBetti} Let $S = K[x_1,\ldots,x_n]$ be a polynomial ring over a field $K$, and $I \subseteq S$ be a homogeneous ideal. Let $R= S/I$, and consider a minimal graded free resolution of $R$ over $S$
\[
\xymatrixcolsep{5mm}
\xymatrixrowsep{2mm}
\xymatrix{
0 \ar[r] & G_{n-d} \ar[r] & G_{n-d-1} \ar[r] & \cdots\cdots \ar[r] & S \ar[r] & R \ar[r] & 0.
}
\]
If we write $G_{n-d} = \bigoplus_j S(-j)^{\beta_{n-d,j}(R)}$, where the positive integers $\beta_{n-d,j}(R)$ are the $(n-d)$-th graded Betti numbers of $R$ over $S$, we have that $\max\{j \mid \beta_{n-d,j}(R) \ne 0\} = -n-a_d(R)$ \cite[Section 3.6]{BrHe}. Therefore, when $R$ is Cohen-Macaulay, $a_d(R)$ can be read from the graded Betti numbers of $R$ over $S$.
\end{remark}
\begin{lemma}\label{Lemma Red a-in}
Let $(R,\m,K)$ be a Cohen-Macaulay standard graded $K$-algebra of dimension $d$, with $K$ a field of characteristic zero, and let $A$ be a finitely generated $\ZZ$-subalgebra of $K$. Assume that $R_0$ is a graded $A$-algebra such that $R_0 \otimes_A K \cong R$ as graded rings. For a closed point $s \in \Max\Spec(A)$, let $R_s=R_0 \otimes_A \kappa(s)$, where $\kappa(s)$ is the residue field of $s \in \Spec(R)$. Then, there exists a dense open subset $U \subseteq \Max\Spec(A)$ such that $a_d(R) = a_d(R_s)$ for all $s \in U$. 
\end{lemma}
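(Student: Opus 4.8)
The plan is to reduce the statement to the invariance of graded Betti numbers under reduction modulo $s$ via Remark \ref{aInvBetti}, and then to prove that invariance using the standard generic-freeness arguments behind the reduction-to-characteristic-$p$ technique \cite{HHCharZero}.

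First I would set up notation over $A$. Put $S_0=A[x_1,\dots,x_n]$ with its standard grading and choose a graded surjection $S_0\twoheadrightarrow R_0$; tensoring with $K$ over $A$ gives $S:=S_0\otimes_A K=K[x_1,\dots,x_n]$ together with a graded surjection $S\twoheadrightarrow R$. Since $R$ is Cohen--Macaulay of dimension $d$, the Auslander--Buchsbaum formula gives $\pd_S R=n-d$; writing the top module of the minimal graded free resolution of $R$ over $S$ as $\bigoplus_j S(-j)^{\beta_{n-d,j}(R)}$, Remark \ref{aInvBetti} says $a_d(R)=-n-\mu$ with $\mu=\max\{j\mid\beta_{n-d,j}(R)\neq0\}$. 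It therefore suffices to produce a dense open $U\subseteq\Max\Spec(A)$ over which $R_s$ is again Cohen--Macaulay of dimension $d$ and $\beta_{ij}(R_s)=\beta_{ij}(R)$ for all $i,j$; applying Remark \ref{aInvBetti} to $R_s$ then yields $a_d(R_s)=-n-\mu=a_d(R)$.

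For the invariance of the Betti numbers, I would compute $\beta_{ij}(R)=\dim_K\operatorname{Tor}^S_i(R,K)_j$ and $\beta_{ij}(R_s)=\dim_{\kappa(s)}\operatorname{Tor}^{S_s}_i(R_s,\kappa(s))_j$ (where $S_s=\kappa(s)[x_1,\dots,x_n]$) from one and the same graded free resolution $P_\bullet$ of $R_0$ over $S_0$, via the graded $A$-modules $\operatorname{Tor}^{S_0}_i(R_0,A)=H_i(P_\bullet\otimes_{S_0}A)$, where $A=S_0/(x_1,\dots,x_n)S_0$. Because $K$ is flat over the domain $A$, we get $\operatorname{Tor}^{S_0}_i(R_0,A)\otimes_A K\cong\operatorname{Tor}^S_i(R,K)$ as graded vector spaces. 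Now invoke generic freeness: after inverting a single nonzero $g\in A$, the graded modules $R_0$ and $\operatorname{Tor}^{S_0}_i(R_0,A)$ become graded free over $A$ and their formation commutes with base change along $A\to\kappa(s)$, for every closed point $s$ of the dense open $U=D(g)\cap\Max\Spec(A)$. Consequently, for $s\in U$, $\operatorname{Tor}^{S_s}_i(R_s,\kappa(s))_j\cong\operatorname{Tor}^{S_0}_i(R_0,A)_j\otimes_A\kappa(s)$ has the same dimension as $\operatorname{Tor}^S_i(R,K)_j$, so $\beta_{ij}(R_s)=\beta_{ij}(R)$ for all $i,j$. In particular $\pd_{S_s}R_s=n-d$, so $\depth R_s=d$ by Auslander--Buchsbaum, while $R_s$ and $R$ have identical Hilbert series (these are determined by the $\beta_{ij}$) and hence $\dim R_s=\dim R=d$; thus $R_s$ is Cohen--Macaulay of dimension $d$, and the claim follows.

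The step I expect to be the real work is the generic-freeness bookkeeping---that a single localization of $A$ simultaneously renders all the relevant $\operatorname{Tor}$ modules $A$-free and compatible with passage to the closed fibers---but this is precisely the content of the standard spreading-out lemmas underlying reduction to positive characteristic, so I would cite \cite{HHCharZero} rather than redo it. A variant that sidesteps resolutions is to use the graded canonical module $\omega_R\cong\Ext^{n-d}_S(R,S(-n))$ together with graded local duality, which identify $a_d(R)$ with $-\min\{j\mid(\omega_R)_j\neq0\}$, and then apply the same generic-freeness statement to the graded $A$-module $\Ext^{n-d}_{S_0}(R_0,S_0)$.
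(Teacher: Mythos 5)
Your proposal is correct and follows essentially the same route as the paper: both reduce the statement to the invariance of graded Betti numbers (hence Cohen--Macaulayness and dimension) under reduction to closed fibers, via Remark \ref{aInvBetti}, with the spreading-out/generic-freeness content delegated to \cite{HHCharZero} (the paper cites Theorems 2.3.5 and 2.3.15 directly, where you sketch the Tor and flat base change mechanism behind them). The extra details you supply (Hilbert series to control dimension, the canonical-module variant) are fine but not a different method.
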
 
\begin{proof}
Since $R_0$ is a finitely generated graded $A$-algebra, we  write $R_0 \cong B/J$, for some homogeneous ideal $J \subseteq B := A[x_1,\ldots,x_n]$. Let $T = B \otimes_A K \cong K[x_1,\ldots,x_n]$, and for $s \in \Max\Spec(R)$, let $B_s = B \otimes_A \kappa(s) \cong \kappa(s)[x_1,\ldots,x_n]$. By \cite[Theorem 2.3.5, and Theorem 2.3.15]{HHCharZero}, there exists a dense open subset $U \subseteq \Max\Spec(R)$ such that, for all $s \in U$, $R_s$ is Cohen-Macaulay of dimension $d = \dim(R)$. Furthermore, the graded Betti numbers of $R$ over $T$ are the same as the graded Betti numbers of $R_s$ over $B_s$ \cite[Theorem 2.3.5 (e)]{HHCharZero}. In particular, it follows from Remark \ref{aInvBetti} that $a_d(R) = a_d(R_s)$ for all $s \in U$.
\end{proof}

\begin{theorem}\label{MainCharZero}
Let $K$ be a field of characteristic zero, and let $(R,\m,K)$ be a standard graded normal and $\QQ$-Gorenstein $K$-algebra such that $X=\Spec R$ is log-terminal. Let $d=\dim(R)$. Then,
\begin{enumerate}
\item $\lct(X)\ls -a_d(R)$. 
\item If $R$ is Gorenstein, then $\lct(X)=-a_d(R).$
\end{enumerate}
\end{theorem}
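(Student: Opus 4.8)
The plan is to deduce this from the positive characteristic results of Theorem \ref{MainTheorem} via reduction to prime characteristic, using the dictionary between log-canonical / log-terminal singularities and $F$-pure / strongly $F$-regular type. First I would set up the reduction datum: choose a finitely generated $\ZZ$-subalgebra $A\subseteq K$ and a graded $A$-algebra $R_0$ with $R_0\otimes_A K\cong R$ (as graded rings), with $\m$ descending to the irrelevant ideal of $R_0$. By Lemma \ref{Lemma Red a-in} (applicable since a log-terminal $\QQ$-Gorenstein normal ring is in particular Cohen-Macaulay, as log-terminal singularities are rational, hence Cohen-Macaulay — or simply because we may shrink $\Spec A$ to force Cohen-Macaulayness and preserve Betti numbers) there is a dense open $U\subseteq\Max\Spec(A)$ on which $a_d(R_s)=a_d(R)$ for all $s\in U$.

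Next I would invoke the characteristic-zero/characteristic-$p$ correspondence for singularities of pairs: since $X=\Spec R$ is log-terminal and $\QQ$-Gorenstein, by Remark \ref{Rem lct-klt} we have $\lct(X)=\sup\{\lambda\mid (X,\lambda Y)\text{ is klt}\}$ where $Y=V(\m)$, and by the theorems of Hara--Yoshida / Takagi (klt type $\Leftrightarrow$ strongly $F$-regular type, lc type $\Leftrightarrow$ dense $F$-pure type) the pair $(R,\m^\lambda)$ is of dense $F$-pure type whenever $(X,\lambda Y)$ is lc. Fixing any rational $\lambda<\lct(X)$, the pair is klt, hence of strongly $F$-regular type, and in particular of dense $F$-pure type; so for a dense set of closed points $s$ — which we may intersect with $U$ — the pair $(R_s,\m_s^\lambda)$ is $F$-pure. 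In particular $R_s$ itself is $F$-pure and $F$-finite, and $\fpt(R_s)\gs\lambda$. Applying Theorem \ref{MainTheorem}(1) to $R_s$ gives $\lambda\ls\fpt(R_s)\ls -a_d(R_s)=-a_d(R)$. Letting $\lambda\to\lct(X)$ yields part (1).

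For part (2), when $R$ is Gorenstein, so is $R_s$ for all $s$ in a dense open set (Gorenstein is preserved under this kind of reduction, again by \cite{HHCharZero}); intersecting with $U$ and with the dense $F$-pure locus, we may choose $s$ with $R_s$ Gorenstein, $F$-finite, $F$-pure, and $a_d(R_s)=a_d(R)$. Then Theorem \ref{MainTheorem}(3) gives $\fpt(R_s)=-a_d(R_s)=-a_d(R)$. It remains to relate $\fpt(R_s)$ back to $\lct(X)$: the reverse inequality $\lct(X)\ls\fpt(R_s)$ for $s$ in a dense set follows from the inclusion lc type $\subseteq$ dense $F$-pure type applied to $\lambda$ slightly below $\lct(X)$ (giving $\fpt(R_s)\gs\lambda$ along a dense set), while $\fpt(R_s)\ls\lct(X)$ follows from the semicontinuity / comparison theorem of Hara--Watanabe relating $\fpt$ of the reductions to $\lct$ (the $F$-pure threshold is at most the log-canonical threshold, with equality for a dense set of primes in the Gorenstein/strongly $F$-regular setting). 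Combining, $\lct(X)=\fpt(R_s)=-a_d(R)$.

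The main obstacle I anticipate is bookkeeping the compatibility of the various dense/open subsets of $\Max\Spec(A)$ — the locus where the Betti numbers (hence $a_d$) are preserved, the locus where $R_s$ is Gorenstein, and the dense locus where the pair $(R_s,\m_s^\lambda)$ is $F$-pure — and in particular making sure the dense-$F$-pure-type locus meets the open loci (density against openness is fine, but one should be careful that "dense subset" in the definition of dense $F$-pure type need not be open, so the argument only yields a \emph{dense} set of good primes, not an open one; this is enough for the limiting argument). A secondary subtlety is the direction $\fpt(R_s)\ls\lct(X)$ in part (2), which genuinely needs the inequality between $F$-pure threshold and log-canonical threshold under reduction mod $p$ rather than just the correspondence of singularity types; citing Hara--Yoshida/Takagi precisely here is the delicate point.
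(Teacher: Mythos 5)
Your proposal is correct and follows essentially the same route as the paper: reduce to characteristic $p$, use klt $\Rightarrow$ dense strongly $F$-regular type together with Theorem \ref{Thm a-inv} and Lemma \ref{Lemma Red a-in} for part (1), and spread out the Gorenstein property, apply Theorem \ref{ThmGor}, and use the ``$F$-pure reductions on a dense set imply log canonical'' direction for part (2). The only caveats are minor: your phrase ``lc type $\subseteq$ dense $F$-pure type'' should be the known klt $\Rightarrow$ strongly $F$-regular type (the lc version is the open conjecture, but your pairs are klt since $\lambda<\lct(X)$ and $X$ is log-terminal, and that inequality is in any case not needed for part (2)), and the paper makes the load-bearing inequality $\fpt(R_s)\ls\lct(X)$ precise exactly as you anticipate, by noting that $\fpt(R_s)=-a_d(R)$ on a dense set of closed points and applying \cite[Proposition 3.2 (1)]{TW2004} to $\delta_m=-a_d(R)-\frac{1}{m}$ rather than invoking a pointwise comparison (and no equality of thresholds for dense sets of primes is needed).
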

\begin{proof}
We can write $R=K[x_1,\ldots,x_n]/(f_1,\ldots,f_\ell)$ for some integer $n$ and some homogeneous polynomials $f_1,\ldots,f_\ell \in S:=K[x_1,\ldots,x_n]$.  Let $A$ be the finitely generated $\ZZ$-algebra generated by all the coefficients of $f_1,\ldots, f_\ell$. Define $T:=A[x_1,\ldots,x_n]$ and notice that, if we set $R_0:=T/(f_1,\ldots,f_\ell)$, we have that $R_0 \otimes_A K \cong R$. Since $X$ is log-terminal, $R$ is a Cohen-Macaulay ring.
For a closed point $s \in \Spec(A)$, set $R_{s}:=R_0\otimes_A \kappa(s)$, and $\m_s: =(x_1,\ldots,x_n)R_{s}$.
For $m \in \NN$, let $\lambda_m=\lct(X)-\frac{1}{m}$.
Then, the pair $(X,\lambda_m Y)$ is klt by Remark \ref{Rem lct-klt}.
Thus,  $(X,\lambda_m Y)$ is of dense strongly $F$-regular type \cite[Theorem 6.8]{H-Y} \cite[Corollary 3.5]{TakagiAdj}.
It follows that $(R_{s},\m_s^{\lambda_m})$ is strongly $F$-regular for each closed point $s\in V$, where $V \subseteq \Max\Spec(A)$ is a dense set. By Remark \ref{Rem sfr-fp}, we have that $\lambda_m \ls \fpt(R_{s})$  for all $m$, and hence
\[
\ds \lambda_m\ls \fpt(R_{s}) \ls -a_d(R_{s})
\]
by Theorem \ref{Thm a-inv}.
By Lemma \ref{Lemma Red a-in}, there exists a dense open subset $U \subseteq \Max\Spec(A)$ such that $a_d(R_s) = a_d(R)$ for all $s \in U$. Thus, for $s$ in non-empty intersection $U \cap V$, we have
\[
\ds \lct(X)-\frac{1}{m} \ls -a_d(R_{s}) = -a_d(R).
\]
After taking the limit as $m\to \infty$, we obtain that $\lct(X)\ls -a_d(R)$.

For the second result, we note that if $R$ is a Gorenstein ring, 
then $R_{s}$ is a Gorenstein ring for $s$ in a dense open subset $W \subseteq \Max\Spec(A)$ \cite[Theorem 2.3.15]{HHCharZero}. Let $U$ be as above.
Then, $\fpt(R_{s})=-a_d(R_{s})=-a_d(R)$ for $s \in W \cap U$, where $W\cap U$ is a dense open subset of $\Max\Spec(A)$. Let $\delta_m:=-a_d(R)-\frac{1}{m}$. Because of the equality obtained above, we have that $(R,\m^{\delta_m}_{s})$ is $F$-pure for $s \in W$. Thus, $\delta_m\ls\lct(X)$ for every $m \in \NN$ \cite[Proposition 3.2 (1)]{TW2004}, and hence $-a_d(R)\ls\lct(X)$. The desired equality now follows from the first part.
\end{proof}

A key point in the previous theorem was that if $X$ is log-terminal, then
$$\lct(X)=\sup \{\lambda  \in \RR_{\gs 0}   \mid \mbox{ the pair } (X,\lambda Y) \mbox{ is klt}\},$$ and a pair is klt if and only if the pair is of dense strongly $F$-regular type.  One could try to replace log-terminal by log-canonical in the statement of Theorem \ref{MainCharZero}, to get an analogue of Theorem \ref{MainTheorem} in characteristic zero. It is a very important and longstanding open problem whether a pair is log-canonical if and only if the pair is of dense $F$-pure type (see \cite{TakagiAdj,HaraWatanabe,MS}). In addition, note that Lemma \ref{Lemma Red a-in} requires that $X$ is Cohen-Macaulay, and this is not necessarily implied by $X$ being log-canonical. It is not  clear to us that the $a$-invariant behaves well with respect to reduction to positive characteristic, in case $X$ is only log-canonical. Motivated by these considerations, we make the following conjecture:

\begin{conjecture}
Let $(R,\m,K)$ be a standard graded normal  $\QQ$-Gorenstein algebra over a field, $K$, of characteristic zero. Let $d=\dim(R)$ 
and let $X=\Spec R$. 
Suppose that $Y$ is log-canonical. 
Then,
\begin{enumerate}
\item $\lct(X)\ls -a_d(R)$.
\item If $R$ is Gorenstein, then $\lct(X)=-a_d(R).$
\end{enumerate}
\end{conjecture}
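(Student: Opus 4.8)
\begin{remark}
We do not know how to prove this conjecture, but we describe the approach we would take; it reduces the conjecture to a well-known open problem. The plan is to imitate the proof of Theorem~\ref{MainCharZero}, replacing ``Kawamata log-terminal'' by ``log-canonical'' and ``dense strongly $F$-regular type'' by ``dense $F$-pure type'' throughout. Write $R = S/(f_1,\ldots,f_\ell)$ with $S = K[x_1,\ldots,x_n]$, let $A \subseteq K$ be the finitely generated $\ZZ$-subalgebra generated by the coefficients of the $f_j$, put $T = A[x_1,\ldots,x_n]$ and $R_0 = T/(f_1,\ldots,f_\ell)$, and for a closed point $s \in \Max\Spec(A)$ set $R_s = R_0\otimes_A\kappa(s)$ and $\m_s = (x_1,\ldots,x_n)R_s$. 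For $m \in \NN$ let $\lambda_m = \lct(X) - \tfrac{1}{m}$, so that $(X,\lambda_m Y)$ is log-canonical. The input we would need is that a log-canonical pair is of dense $F$-pure type; granting it, $(R_s,\m_s^{\lambda_m})$ is $F$-pure for all $s$ in a dense subset $V \subseteq \Max\Spec(A)$, and taking the trivial exponent one sees in particular that $R_s$ is $F$-finite and $F$-pure for such $s$. Then the definition of $\fpt$ together with Proposition~\ref{PropEquivFPT} gives $\lambda_m \ls \fpt(R_s)$, and Theorem~\ref{Thm a-inv} gives $\fpt(R_s) \ls -a_d(R_s)$; hence $\lambda_m \ls -a_d(R_s)$ for all $s \in V$.

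It then remains to compare $a_d(R_s)$ with $a_d(R)$, and to produce the reverse inequality in~(2). When $R$ is Gorenstein it is Cohen-Macaulay, so Lemma~\ref{Lemma Red a-in} applies as stated and yields a dense open $U \subseteq \Max\Spec(A)$ with $a_d(R_s) = a_d(R)$ on $U$; for $s \in U\cap V$ we get $\lambda_m \ls -a_d(R)$, and $m \to \infty$ gives $\lct(X) \ls -a_d(R)$. For the reverse inequality, $R_s$ is Gorenstein for $s$ in a dense open $W$ by \cite[Theorem~2.3.15]{HHCharZero}, so Theorem~\ref{ThmGor} gives $\fpt(R_s) = -a_d(R_s) = -a_d(R)$ for $s \in U\cap V\cap W$; hence, writing $\delta_m = -a_d(R) - \tfrac{1}{m}$, the pair $(R_s,\m_s^{\delta_m})$ is $F$-pure on a dense set, so $(R,\m^{\delta_m})$ is of dense $F$-pure type, and by the \emph{known} implication ``dense $F$-pure type $\Rightarrow$ log-canonical'' \cite{HaraWatanabe} (see also \cite[Proposition~3.2(1)]{TW2004}) we obtain $\delta_m \ls \lct(X)$, whence $-a_d(R) \ls \lct(X)$ as $m \to \infty$. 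This settles~(2) granting the quoted input, and the same chain gives~(1) when $R$ is Cohen-Macaulay. For~(1) in general (e.g.\ $d \gs 3$ with $R$ not Cohen-Macaulay) one still needs $a_d(R_s) = a_d(R)$ on a dense open: since by graded local duality the canonical module $\omega_R := \Ext^{n-d}_S(R,S(-n))$ is a finitely generated graded module whose least nonzero degree is $-a_d(R)$ \cite[Section~3.6]{BrHe}, one should obtain this from generic freeness applied to $\Ext^{n-d}_T(R_0,T(-n))$ over $A$, so that its formation commutes with base change to $\kappa(s)$ for $s$ in a dense open; this would remove the Cohen-Macaulay hypothesis from Lemma~\ref{Lemma Red a-in}.

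The real obstacle is the first ingredient: that a log-canonical pair (equivalently, a log-canonical singularity) is of dense $F$-pure type. This is a central and longstanding open problem, tied to the conjectural ordinariness of reduction modulo $p$; it is needed both to obtain the $F$-purity of the pairs $(R_s,\m_s^{\lambda_m})$ and, in the special case of the trivial exponent, even to know that $R_s$ is $F$-pure at all, which must be in place before one can invoke Theorems~\ref{Thm a-inv} and~\ref{ThmGor}. By contrast, the converse implication ``dense $F$-pure type $\Rightarrow$ log-canonical'' is a theorem, and it is precisely what drives the reverse inequality in~(2). In short, the conjecture would follow from the log-canonical/dense $F$-pure type correspondence together with the Cohen-Macaulay-free refinement of Lemma~\ref{Lemma Red a-in} indicated above---the latter being a genuinely technical point, needed only for~(1) in dimension at least three, which we expect to be surmountable.
\end{remark}
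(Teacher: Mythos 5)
This statement is left as an open conjecture in the paper---there is no proof to compare against---and your proposal rightly does not claim one: it is a conditional reduction that identifies exactly the two obstructions the paper itself discusses before stating the conjecture, namely the open equivalence between log-canonical pairs and dense $F$-pure type, and the fact that Lemma \ref{Lemma Red a-in} needs Cohen-Macaulayness, which log-canonicity does not supply. Your sketch of the conditional argument (mirroring Theorem \ref{MainCharZero} with klt/strongly $F$-regular replaced by lc/$F$-pure, using \cite[Proposition 3.2(1)]{TW2004} for the reverse inequality in (2), and the suggested graded-local-duality/generic-freeness route to control $a_d$ without Cohen-Macaulayness) is consistent with the paper's discussion and adds a reasonable technical refinement, but the conjecture remains open for the reason you state.
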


We also ask the following question motivated by Theorem \ref{ThmFpureSeq}.

\begin{question}
Let $(R,\m,K)$ be a standard graded normal  Gorenstein algebra over a field, $K$, of characteristic zero. Let $d=\dim(R)$, $a=a_d(R)$, and let $X=\Spec R$. Suppose that $X$ is log-canonical. 
Then, are there $a$ linear forms, $\ell_1,\ldots, \ell_a,$ such that
$$
\Spec R/(\ell_1,\ldots, \ell_t)
$$
is log-canonical?
\end{question}

\section{Homological invariants of $F$-pure rings}

\label{SecProjDim}
Let $K$ be a field and let $S=K[x_1,\ldots, x_n]$ be a polynomial ring. Let $I\subseteq S$ be a homogeneous ideal, and let $R=S/I$.
Suppose that $I=(f_1,\ldots,f_j)$ is generated by forms of degree $d_i = \deg(f_i)$.  Let $G_\bullet$ be the
minimal graded free resolution of $R$. Each $G_i$ can be written as a direct sum of copies of $S$ with shifts:
$$
G_i=\bigoplus_{j\in\ZZ} S(-j)^{\beta_{i,j}(R)},
$$
where $S(-j)$ denotes a rank one free module where the generator has degree
$j$.  The exponents $\beta_{i,j}(R)$ are called the graded Betti numbers of $R$.
We define the projective dimension of $R$ by
\[
\ds \pd_S(R) = \max\{i \mid \beta_{i,j}(R) \neq 0 \text{ for some $j$}\}.\]  
The Castelnuovo-Mumford regularity of $R=S/I$ is defined by
$$
\ds \reg_S(R) = \max\{j-i \mid \beta_{i,j}(R) \neq 0
\hbox{ for some $i$}\}.
$$
Equivalently, if $d=\dim(R)$, it can be defined as
\[
\ds \reg_S(R) = \max\{a_i(R) + i \mid i=0,\ldots,d\}.
\]
Suppose that $R$ is an $F$-pure ring.
In this section, we provide bounds for the projective dimension and Castelnuovo-Mumford regularity of $R$ over $S$. This relates to an important question in commutative algebra asked by Stillman: 

\begin{question}[{\cite[Problem 3.14]{PeevaStillman}}]\label{QStillman} 
Let $S=K[x_1,\ldots, x_n]$ be a polynomial ring over a field $K$ and fix a sequence of natural numbers $d_1,\ldots,d_j$.  Does there exist a constant $C=C(d_1,\ldots,d_j)$ (independent of $n$) such that
\[\pd_S(S/J) \ls C\]
for all homogeneous ideals $J \subseteq S$ generated by homogeneous polynomials of degrees $d_1,\ldots,d_j$?
\end{question}

Recall that $b_\m(p^e) = \max\{r \ \mid \m^r \not\subseteq I_e(R)\}$. Proposition \ref{PropFpureSeq} gives a relation between $b_\m(p)$ and $\Depth_S(R)$, hence between $b_\m(p)$ and $\pd_S(R)$ by the Auslander-Buchsbaum's formula. For $F$-pure rings,  the projective dimension and the Castelnuovo-Mumford regularity have explicit upper bounds. The bound for the projective dimension easily follows from a special case of a result of Lyubeznik.  This argument has already been used, essentially, in \cite[Theorem 4.1]{AnuragUliPure}. 
\begin{theorem} \cite[Corollary 3.2]{LyuVan} \label{lyubFdepth} Let $S$ be regular local ring of positive characteristic $p$, and let $I \subseteq S$ be an ideal. Let $R=S/I$, with maximal ideal $\m$. Then, for $i \in \NN$, we have $H^{n-i}_I(S) = 0$ if and only if $F^e:H^i_\m(R) \to H^i_\m(R)$ is the zero map for some $e \in \NN$.
\end{theorem}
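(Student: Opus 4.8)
The statement to prove is Theorem \cite[Corollary 3.2]{LyuVan}, attributed to Lyubeznik. Since this is a \emph{cited} result rather than one the authors prove themselves, my task is really to sketch how one would establish it --- and indeed the natural route is through Lyubeznik's theory of $F$-modules together with the graded local duality relating $H^{n-i}_I(S)$ to $H^i_{\mathfrak m}(R)$.

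\textbf{Approach.} The plan is to deduce the equivalence from two ingredients: (i) graded/local duality, which turns the Frobenius action on $H^i_{\mathfrak m}(R)$ into the natural Frobenius structure carried by the injective hull appearing in the Matlis dual of $H^{n-i}_I(S)$; and (ii) the structure theory of $F$-finite $F$-modules, by which a cofinite $R$-module with Frobenius action is zero precisely when some iterate of its Frobenius is nilpotent on it. More precisely, first I would recall that $S$ is regular of dimension $n$, so by local duality over the complete regular local ring $\widehat S$ one has $H^{n-i}_I(S)^\vee \cong \Ext^i_S(S/I, S) \otimes (\text{twist})$ in the Cohen--Macaulay case, but in general one uses the spectral sequence / the isomorphism $H^{n-i}_I(S)^{\vee}\cong \varinjlim \Ext^{n-i}_S(S/I^{[p^e]}, S)$ coming from the Cech description; the point is that $H^{n-i}_I(S)$ is an $F$-finite $F$-module with generating morphism built from the Frobenius on $S$. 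Dually, $H^i_{\mathfrak m}(R)$ acquires the $R$-linear Frobenius $F^e$, and the Matlis dual functor interchanges "the $F$-module $H^{n-i}_I(S)$ vanishes" with "the Frobenius-limit of $H^i_{\mathfrak m}(R)$ vanishes."

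\textbf{Key steps, in order.} (1) Set up local duality: replace $S$ by its completion, identify $E = E_S(K)$, and record that $H^i_{\mathfrak m}(R) \cong \Hom_S(H^{n-i}_I(S), E)$ as $S$-modules, compatibly with Frobenius (the Frobenius on the left corresponds to the "Frobenius functor applied to the generating map" on the right). (2) Recall Lyubeznik's theorem that an $F$-finite $F$-module $\mathcal M$ with generating morphism $\beta: M \to F^*(M)$ is zero if and only if $M$ is zero, and that $\mathcal M = \varinjlim(M \xrightarrow{\beta} F^*M \to \cdots)$; combine this with the fact that for the local-cohomology $F$-module $H^{n-i}_I(S)$ one can take the generating module to be the finitely generated module $\Ext^{n-i}_S(S/I,S)$ (in general a suitable finitely generated submodule), so that $H^{n-i}_I(S)=0$ iff the transition maps in this direct limit are eventually zero. (3) Transport this across the duality: "the transition maps in the direct limit computing $H^{n-i}_I(S)$ are eventually zero" is exactly dual to "$F^e: H^i_{\mathfrak m}(R)\to H^i_{\mathfrak m}(R)$ is the zero map for some $e$," because the transition map $F^*(-)$ on the $\Ext$-side dualizes to the $R$-linear Frobenius $F$ on the local cohomology of $R$, and $H^i_{\mathfrak m}(R)$ is Artinian so a map out of it vanishes iff its Matlis dual does. (4) Assemble: $H^{n-i}_I(S)=0 \iff$ the limit system is eventually zero $\iff$ $F^e$ is zero on $H^i_{\mathfrak m}(R)$ for some $e$.

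\textbf{Main obstacle.} The delicate point is step (1)--(2) in the non-Cohen--Macaulay generality: one must be careful that $H^{n-i}_I(S)$, though not finitely generated, is an $F$-finite $F$-module whose vanishing is detected by the eventual vanishing of the transition maps of \emph{a} finitely generated generating module, and that this generating module (or at least its relevant direct limit) is Matlis dual to $H^i_{\mathfrak m}(R)$ with matching Frobenius structures. Keeping the Frobenius/grading bookkeeping straight through Matlis duality --- in particular that the abstract "Frobenius functor" $F^*$ on the $S$-side corresponds to the honest additive Frobenius $F$ on $H^i_{\mathfrak m}(R)$, up to the identification $R^{1/p}\cong F_*R$ --- is where the real work lies; once that dictionary is fixed, the equivalence is formal. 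I would expect to cite Lyubeznik's \emph{$F$-modules} paper for the structural input and Hartshorne--Speiser / Lyubeznik for the "eventually nilpotent Frobenius implies zero" principle, and otherwise the argument is a duality computation.
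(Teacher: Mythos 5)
This statement is not proved in the paper at all: it is quoted verbatim from Lyubeznik (the cited Corollary 3.2), so there is no internal proof to compare against. Your sketch is, in outline, exactly the argument of the cited source: write $H^{n-i}_I(S)$ as the direct limit of $\Ext^{n-i}_S(S/I^{[p^e]},S)\cong F^e\bigl(\Ext^{n-i}_S(S/I,S)\bigr)$ using flatness of Frobenius (Peskine--Szpiro), observe that vanishing of this limit is detected by eventual vanishing of the composite transition maps out of the finitely generated generating module $\Ext^{n-i}_S(S/I,S)$, and then apply local (Matlis) duality over the completion to translate those composites into the $e$-th Frobenius action on $H^i_\m(R)$, invoking the Hartshorne--Speiser--Lyubeznik nilpotency formalism. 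So your proposal is correct in approach and is the standard proof.

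Two small points to tighten. First, the displayed isomorphism should read $H^{n-i}_I(S)\cong \varinjlim_e \Ext^{n-i}_S(S/I^{[p^e]},S)$, with no Matlis dual on the left; the dual enters only afterwards, when you convert the transition maps into maps on $H^i_\m(R)$. Second, since $F^e$ on $H^i_\m(R)$ is $p^e$-linear rather than $R$-linear, Matlis duality cannot be applied to it directly; you must first rewrite it as the $R$-linear map $H^i_\m(R)\to F^e_*H^i_\m(R)$ (equivalently, work with the Frobenius functor applied to the Ext side), which is the bookkeeping you correctly flag as the delicate step. With those adjustments the sketch matches the cited proof; completeness of $S$ can be assumed harmlessly since both local cohomology modules and the Frobenius action are insensitive to completion.
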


We now present a theorem that gives bounds for homological invariants of  standard graded algebras $F$-pure $K$-algebras.

\begin{theorem} \label{bounds}
Let $S=K[x_1,\ldots, x_n]$ be a polynomial ring over a field of positive characteristic $p$. Let $I \subseteq S$ be a homogeneous ideal such that $R=S/I$ is $F$-pure. Then, 
$$\pd_S(R)\ls \mu_S(I) ,$$
where $\mu_S(I)$ denotes the minimal number of generators of $I$ in $S$.
If $K$ is $F$-finite, then $$\reg_S(R)\ls \dim(R)-\fpt(R).$$
\end{theorem}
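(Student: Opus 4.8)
The plan is to establish the two bounds separately. For the projective dimension bound $\pd_S(R)\ls\mu_S(I)$, I would combine Theorem~\ref{lyubFdepth} with the $F$-purity hypothesis. Since $R$ is $F$-pure, the Frobenius action $F^e:H^i_\m(R)\to H^i_\m(R)$ is injective for every $i$ and every $e$; in particular it is never the zero map unless $H^i_\m(R)=0$. By Theorem~\ref{lyubFdepth}, this forces $H^{n-i}_I(S)=0$ whenever $H^i_\m(R)=0$. Now let $c=\depth_S(R)$, so that $H^i_\m(R)=0$ for $i<c$; equivalently $H^{n-i}_I(S)=0$ for $i<c$, i.e.\ $H^j_I(S)=0$ for $j>n-c$. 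Hence the cohomological dimension of $I$ satisfies $\cd(I,S)\ls n-c$. On the other hand, $\cd(I,S)$ is bounded below by the arithmetic rank, but what I actually want is the upper bound on $\pd_S(R)$: by Auslander--Buchsbaum, $\pd_S(R)=n-\depth_S(R)=n-c$. So it suffices to show $n-c\ls\mu_S(I)$, equivalently $\cd(I,S)\ls\mu_S(I)$, which holds trivially since $I$ is generated by $\mu_S(I)$ elements and the \v{C}ech complex on those generators computes $H^\bullet_I(S)$, vanishing in degrees above $\mu_S(I)$. Combining $\pd_S(R)=n-c=\cd(I,S)\ls\mu_S(I)$ gives the first inequality. (The one subtlety to handle carefully is passing from the graded statement to the local one so that Theorem~\ref{lyubFdepth}, stated for regular local rings, applies: one localizes at $\n$ and uses that $F$-purity, depth, and local cohomology vanishing all localize appropriately.)

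For the regularity bound $\reg_S(R)\ls\dim(R)-\fpt(R)$, I would use the characterization $\reg_S(R)=\max\{a_i(R)+i\mid i=0,\ldots,d\}$ recalled just before the statement, together with Theorem~\ref{Thm a-inv}. For each $i$ with $H^i_\m(R)\neq 0$ we have $a_i(R)\ls -\fpt(R)$ by Theorem~\ref{Thm a-inv}, and moreover $i\ls d=\dim(R)$ since local cohomology vanishes above dimension. Therefore $a_i(R)+i\ls -\fpt(R)+d$ for every such $i$, and taking the maximum over $i$ yields $\reg_S(R)\ls d-\fpt(R)$. Indices $i$ with $H^i_\m(R)=0$ contribute $a_i(R)+i=-\infty$ and may be ignored.

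I do not expect a serious obstacle in either part; the work is essentially in assembling ingredients that are already available. The most delicate point is the reduction-to-local argument for the projective dimension bound: one must make sure that the graded ring $R$ being $F$-pure implies $R_\n$ is $F$-pure (which follows from $F$-purity localizing, as used implicitly in Remark~\ref{FpureFsplit} and Proposition~\ref{splitting prime properties}(\ref{basic5})), that the relevant local cohomology modules agree with the graded ones after localization, and that $\mu_S(I)$ is unchanged (or only decreases) upon localizing at $\n$, which holds since $I$ is homogeneous and all its minimal generators lie in $\n$. Once these bookkeeping points are settled, both inequalities drop out of Theorem~\ref{lyubFdepth}, Auslander--Buchsbaum, and Theorem~\ref{Thm a-inv} respectively.
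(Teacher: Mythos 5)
Your regularity half is exactly the paper's argument: $\reg_S(R)=\max\{a_i(R)+i \mid i=0,\ldots,d\}$ combined with $a_i(R)\ls -\fpt(R)$ from Theorem \ref{Thm a-inv} gives $\reg_S(R)\ls d-\fpt(R)$, and the graded-to-local bookkeeping you flag for the other half (purity localizes, local cohomology with homogeneous support is insensitive to localizing at $\n$, $\mu$ is computed at $\n$) is precisely what the paper does.

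In the projective dimension half, however, there is a directional slip that leaves the chain of inequalities open as written. The implication you actually extract from Theorem \ref{lyubFdepth}, namely $H^i_\m(R)=0\Rightarrow H^{n-i}_I(S)=0$, is the trivial direction: the zero map on the zero module is the zero map, so no $F$-purity is used, and it yields only $\cd(I,S)\ls n-\depth_S(R)=\pd_S(R)$, an inequality pointing the wrong way. Your step ``it suffices to show $n-c\ls\mu_S(I)$, equivalently $\cd(I,S)\ls\mu_S(I)$'' and the concluding line $\pd_S(R)=n-c=\cd(I,S)\ls\mu_S(I)$ presuppose the equality $\pd_S(R)=\cd(I,S)$, of which you have proved only one inequality, and not the one you need. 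What is needed is the contrapositive of the other direction of the equivalence: for $j>\mu_S(I)$ the \v{C}ech complex on $\mu_S(I)$ generators gives $H^j_I(S)=0$; hence, after localizing at $\n$ as you indicate, Theorem \ref{lyubFdepth} says $F^e$ is the zero map on $H^{n-j}_{\m}(R)$ for some $e$, and injectivity of Frobenius on local cohomology --- this is where $F$-purity actually enters --- forces $H^{n-j}_\m(R)=0$. Thus $\depth_S(R)\gs n-\mu_S(I)$, and Auslander--Buchsbaum gives $\pd_S(R)\ls\mu_S(I)$. With this correction your argument coincides with the paper's proof, which uses both directions of Theorem \ref{lyubFdepth} and in fact records the equality $\pd_S(R)=\cd(I,S)$ before invoking $\cd(I,S)\ls\mu_S(I)$.
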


\begin{proof}
Since $R$ is an $F$-pure ring, so is the localization $(R',\m'):=(R_\m,\m_\m)$. In fact if $R \subseteq R^{1/p}$ is pure, then so is $R_\m \subseteq (R^{1/p})_\m = (R_\m)^{1/p}$. Hence the Frobenius homomorphism acts injectively on the local cohomology modules of $R'$. In particular, for any integer $i$ and for any $e \in \NN$, we have that $F^e: H^i_{\m'}(R') \to H^i_{\m'}(R')$ is the zero map if and only if $H^i_{\m'}(R') = 0$. Let $\n=(x_1,\ldots,x_n)$. Since $I$ is homogeneous, we have that $H^{n-i}_I(S) = 0$ if and only if $H^{n-i}_{I_\n}(S_\n) = 0$. By Theorem \ref{lyubFdepth}, it follows that $H^i_{\m'}(R') = 0$ for all $n-i > \cd(I,S)$, and $H_{\m'}^{n-\cd(I,S)}(R') \ne 0$. Therefore, by the Auslander-Buchsbaum's formula, we get $\pd_S(R)=n-\Depth_{S_\n}(R') = \cd(I,S)$. Since $\cd(I,S) \ls \mu_S(I)$, the first claim follows.
For the second claim, let $d=\dim(R)$. Then we have that 
\begin{align*}
\reg_S(R)&=\max\{a_i(R)+i \mid i=0,\ldots,d\}\\
&\ls \max\{i-\fpt(R)\mid i=0,\ldots,d\}\\
&=d-\fpt(R),
\end{align*}
where the second line follows from Theorem \ref{Thm a-inv}.
\end{proof}
\begin{remark} In the notation introduced above, Caviglia proved that finding an upper bound $C=C(d_1,\ldots,d_j)$ for $\pd_S(R)$ is equivalent to finding an upper bound $B=B(d_1,\ldots,d_j)$ for $\reg_S(R)$ (see \cite[Theorem 29.5]{PeevaGrSyz} and \cite[Theorem 2.4]{McCSec}). The bound for $\reg_S(R)$ that we give in Theorem \ref{bounds}, a priori, depends on $n$. However, the inequality $\pd_S(R) \ls \mu_S(I)$ for the projective dimension shows that Stillman's question has positive answer for $F$-pure rings. In particular, there also exists $B=B(d_1,\ldots,d_j)$ such that $\reg_S(R) \ls B$.
\end{remark}

Motivated by the results in the previous theorem, we ask the following question.

\begin{question}
Let $S=K[x_1,\ldots,x_n]$ be a polynomial ring over a field, $K$, of characteristic zero, and let $I$ be a homogeneous ideal.
Suppose that $R=S/I$ is a normal and $\QQ$-Gorenstein ring, and that $X=\Spec R$ is log-canonical.
Is it true that 
$$
\pd_S(R)\ls \mu_S(I)\hbox{ and }\reg_S(R)\ls \dim(R)-\lct(X)\hbox{?}
$$
\end{question}

We end this section by exhibiting an $S_k$ condition that forces an $F$-pure ring to be Cohen-Macaulay.
\begin{proposition}\label{PropSerre}
Let $S=K[x_1,\ldots,x_n]$ be a polynomial ring over a field of positive characteristic $p$. Let $I$ be a homogeneous ideal generated by forms $f_1,\ldots, f_\ell.$ Let $D=\deg(f_1)+\ldots+\deg(f_\ell).$ Suppose that $R=S/I$ is $F$-pure. If $R_Q$ is Cohen-Macaulay for every prime ideal such $\dim(R_Q)\ls D$ , then $R$ is Cohen-Macaulay. 
\end{proposition}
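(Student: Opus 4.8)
The plan is to argue by contradiction and to localize at a ``deepest defect of Cohen--Macaulayness,'' converting the $F$-purity input of Theorem~\ref{bounds} into a bound on the dimension of that local ring. Suppose $R$ is not Cohen--Macaulay. Since $R$ is a quotient of a polynomial ring over a field, it is excellent, so its non--Cohen--Macaulay locus is a closed subset of $\Spec R$; moreover, because $R$ is graded this locus is homogeneous, so it has a homogeneous prime $Q$ as the generic point of one of its irreducible components. Then $R_P$ is Cohen--Macaulay for every $P\subsetneq Q$, while $R_Q$ is not; in particular $R_Q$ is Cohen--Macaulay on its punctured spectrum, i.e.\ it is a generalized Cohen--Macaulay local ring. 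Also $R_Q$ is again $F$-pure, since localizations of $F$-pure rings are $F$-pure (as used in the proof of Theorem~\ref{bounds}). Finally, the hypothesis forces $\dim R_Q\gs D+1$, since otherwise $R_Q$ would be Cohen--Macaulay. So it suffices to prove $\dim R_Q\ls D$.

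Write $\q\subseteq S$ for the preimage of $Q$, so $R_Q=S_\q/IS_\q$ over the regular local ring $S_\q$. Because $R_Q$ is $F$-pure, Frobenius acts injectively on each $H^i_{QR_Q}(R_Q)$, so Lyubeznik's theorem (Theorem~\ref{lyubFdepth}) gives $H^i_{QR_Q}(R_Q)=0$ if and only if $H^{\dim S_\q-i}_{IS_\q}(S_\q)=0$; hence, exactly as in the proof of Theorem~\ref{bounds},
\[
\pd_{S_\q}(R_Q)=\cd(IS_\q,S_\q)\ls\cd(IS,S)\ls\mu_S(I)\ls\deg(f_1)+\dots+\deg(f_\ell)=D .
\]
It therefore remains to deduce $\dim R_Q\ls\pd_{S_\q}(R_Q)$, equivalently $\Depth R_Q\ls\Ht(IS_\q)$ (using $\dim R_Q+\Ht(IS_\q)=\dim S_\q$ in the regular ring $S_\q$). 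The intended route is via the generalized Cohen--Macaulay structure at $Q$: since $R_Q$ is Cohen--Macaulay on the punctured spectrum, the modules $H^i_{QR_Q}(R_Q)$ for $i<\dim R_Q$ all have finite length; applying graded-type local duality over $S_\q$ to $i=\Depth R_Q$ shows $\Ext^{\dim S_\q-\Depth R_Q}_{S_\q}(R_Q,S_\q)$ has finite length, and one should combine this deficiency-module information with the Auslander--Buchsbaum equality $\pd_{S_\q}(R_Q)=\dim S_\q-\Depth R_Q$ to conclude $\Depth R_Q\ls\Ht(IS_\q)$, hence $\dim R_Q\ls\pd_{S_\q}(R_Q)\ls D$, contradicting $\dim R_Q\gs D+1$.

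The main obstacle is precisely this last inequality. The $F$-purity/Lyubeznik input controls $\Depth R_Q$ and the local height $\Ht(IS_\q)$, and therefore the Cohen--Macaulay defect $\dim R_Q-\Depth R_Q$, but it does not by itself control $\dim R_Q$; what must be exploited is the minimality of $Q$ in the non--Cohen--Macaulay locus (so that $R_Q$ is genuinely generalized Cohen--Macaulay) fed back through local duality, and possibly a sharper estimate than $\cd(IS_\q,S_\q)\ls\mu_S(I)$ that remembers the degrees $d_i$ whose sum is $D$. An alternative for this step is a Bertini-type reduction: cut $R$ by a regular sequence of general linear forms preserving $F$-purity to reduce $\dim$ toward $D$ — but here the delicate point is that $F$-purity (globally) need not survive repeated general hyperplane sections, so this must be handled with care. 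Once the bound $\dim R_Q\ls D$ is established the contradiction is immediate, completing the proof.
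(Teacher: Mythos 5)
There is a genuine gap, and it sits exactly where you flag it: the inequality $\dim R_Q\ls\pd_{S_\q}(R_Q)$, equivalently $\Depth R_Q\ls\Ht(IS_\q)$, is not a consequence of anything you have set up. For a non--Cohen--Macaulay local ring this inequality says $\Depth R_Q+\dim R_Q\ls\dim S_\q$, which fails in general (a non--CM ring of small codimension in $S_\q$ but depth close to its dimension violates it), and being generalized Cohen--Macaulay at a minimal prime of the non--CM locus gives finite length deficiency modules but no control of $\dim R_Q$ itself; the sentence ``one should combine this deficiency-module information with Auslander--Buchsbaum'' is not an argument. More fundamentally, in your scheme the only place the degrees enter is through $\mu_S(I)\ls D$, and a bound on $\pd_{S_\q}(R_Q)$ (i.e.\ on the Cohen--Macaulay defect plus height) can never bound $\dim R_Q$, so no refinement of local duality at a minimal non--CM prime will close this step without a new, degree-sensitive input.

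The paper's proof supplies exactly that input, and chooses $Q$ differently. Setting $c=\cd(I,S)$ and $r=\depth_I(S)=n-\dim R$, $F$-purity via Lyubeznik (as in Theorem \ref{bounds}) gives $\pd_S(R)=c$, so if $R$ is not Cohen--Macaulay then $r<c$. One then takes $Q\in\Ass_S H^c_I(S)$: localizing at $Q$ preserves $c=\cd(IS_Q,S_Q)$ and $r=\depth_{I}(S_Q)$, so $R_Q$ is again not Cohen--Macaulay. The key point you are missing is Zhang's theorem (the cited \cite[Theorem 1]{YiZhang}): in positive characteristic the associated primes of $H^c_I(S)$ satisfy $\dim S/Q\gs n-D$, whence $\dim S_Q\ls D$ and in particular $\dim R_Q\ls D$; the hypothesis then forces $R_Q$ to be Cohen--Macaulay, a contradiction. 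So the dimension bound comes from the Frobenius-theoretic structure of the top local cohomology module $H^c_I(S)$ and the degrees of the $f_i$, not from projective dimension or from minimality of $Q$ in the non--CM locus; your Bertini-type alternative is likewise not needed once this is in place.
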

\begin{proof}
Our proof will be by contradiction. 
Suppose that $R$ is not Cohen-Macaulay.
Then, $\depth(R)<\dim(R).$
Let $c=\cd(I,S)$ and $r=\depth_I(S).$
We have that
$$r=n-\dim(R)<n-\depth(R)=\pd_S(R)=c$$
by Theorem \ref{bounds}. 
Let $Q\in\Ass_SH^c_I(S)$. Note that $H^c_I(S_Q)\neq 0$ by our choice of $Q$, and then $c=\cd(IS_Q, S_Q)$. In addition, $H^r_I(S_Q)\neq 0$ because  $I\subseteq Q$. Thus, $r=\depth_I(S_Q)$. It follows that
$$r=n\dim(S_Q)-\dim(R_Q)<\dim(S_Q)-\depth(R)=\pd_{S_Q}(R_Q)=c$$
by Theorem \ref{bounds}. Thus, $\dim(R_Q)\neq \depth(R_Q)$, and so $R_Q$ is not Cohen-Macaulay.

We have that 
$\dim S/Q\gs n-D$ \cite[Theorem 1]{YiZhang}, therefore $\dim S_{Q}\ls D$,
because regular rings satisfy the dimension formula. In particular, $\dim(R_Q)\ls D$. Since we are assuming that $R$ is $S_D$, we have that $R_Q$ must be Cohen-Macaulay, and we reach a contradiction.
 \end{proof}

\section*{Acknowledgments} 
We thank Craig Huneke for very helpful discussions. In particular, we thank him for suggesting a shorter and more elegant proof of Theorem \ref{ThmGor}. We thank Karl Schwede for discussions about geometric interpretations of Theorem \ref{ThmFpureSeq}. We also thank Andrew Bydlon for reading a preliminary version of this article, and for useful feedback and comments.

\bibliographystyle{alpha}
\bibliography{References}

{\footnotesize

\noindent \small \textsc{Department of Mathematics, University of Virginia, Charlottesville, VA  22903} \\ \indent \emph{Email address}:  {\tt ad9fa@virginia.edu} 
\vspace{.25cm}

\noindent \small \textsc{Department of Mathematics, University of Virginia, Charlottesville, VA  22903} \\ \indent \emph{Email address}:  {\tt lcn8m@virginia.edu}  

}

\end{document}